\documentclass[a4paper,10pt]{article}
\usepackage[plainpages=false]{hyperref}
\usepackage{amsfonts,latexsym,rawfonts,amsmath,amssymb,amsthm, mathrsfs, lscape}
\usepackage{verbatim}

\usepackage[all]{xy}
\usepackage{graphicx,psfrag}

\usepackage{array, tabularx}

\usepackage{setspace}
\usepackage[english]{babel}
\usepackage{color}
\usepackage{enumerate}

\usepackage{authblk}

\newtheorem{thm}{Theorem}[section]
\newtheorem{cor}[thm]{Corollary}
\newtheorem{lem}[thm]{Lemma}

\newtheorem{prop}[thm]{Proposition}
\newtheorem{prob}[thm]{Question}
\newtheorem{conj}[thm]{Conjecture}

\newtheorem{rmk}[thm]{Remark}

\theoremstyle{definition}

\numberwithin{equation}{section}

\def \C {\mathbb C}
\def \Z {\mathbb Z}
\def \R {\mathbb R}

\def \F {\mathcal F}

\def \T {T}

 \def \Q {\mathbb Q}
\def \P {\mathbb P}

\def \L {\mathcal L}

\def \t {\mathfrak t}

\def \p {\partial}
\def \bp {\bar{\partial}}

\def \Hilb {\textbf{Hilb}}
\def \Aut {\text{Aut}}

\begin{document}

\title{K\"ahler-Ricci flow, K\"ahler-Einstein metric, and K-stability}
\author{Xiuxiong Chen, Song Sun, Bing Wang \footnote{X.X. Chen is partially supported by NSF grant DMS-1515795;
S. Sun is partially supported by by NSF grant DMS-1405832  and Alfred P. Sloan fellowship; B. Wang is partially supported by  NSF grant DMS-1510401.}}

\maketitle{}

\begin{abstract}
We prove  the existence of K\"ahler-Einstein metric on a K-stable Fano manifold using the recent compactness result on K\"ahler-Ricci flows. The key ingredient is an algebro-geometric description of the asymptotic behavior of K\"ahler-Ricci flow on Fano manifolds. This is in turn based on a general finite dimensional discussion,  which is interesting in its own and could potentially apply to other problems. As one application, we relate the asymptotics of the Calabi flow on a polarized K\"ahler manifold to K-stability assuming bounds on geometry.
\end{abstract}

\tableofcontents

\section{Introduction}
Let $X$ be an $n$-dimensional Fano manifold. It was first conjectured by Yau \cite{Yau} that the existence of a K\"ahler-Einstein metric on $X$ is equivalent to certain algebro-geometric stability of $X$. In 2012, this conjecture was proved
by Chen-Donaldson-Sun \cite{CDS1, CDS2, CDS3}. The precise notion of stability is the so-called K-stability, defined by Tian \cite{Tian} and Donaldson \cite{Do02}. The proof depends on a deformation method involving K\"ahler-Einstein metrics with cone singularities, which was introduced by Donaldson \cite{Do11} in 2011. 

There are also other approaches to study the existence problem of K\"ahler-Einstein metrics on Fano manifolds. In general to make these work two key ingredients are needed, namely the \emph{partial-$C^0$-estimate} and the construction of a  \emph{de-stabilizing test configuration}. The first is analytic and the second is algebraic in nature. For the partial-$C^0$-estimate, it is proved by Sz\'{e}kelyhidi \cite{Sz2} for the classical Aubin-Yau continuity path, by adapting the results of \cite{DS1, CDS2, CDS3}; for the approach using Ricci flow, this is proved by  Chen-Wang \cite{CW1} in dimension two, Tian-Zhang \cite{TZhang} in dimension three, and by Chen-Wang \cite{CW2} in all dimensions as a consequence of the resolution of the Hamilton-Tian conjecture.
We note that these results together with the work of S. Paul \cite{Paul1, Paul2, Paul3} already imply that on a Fano manifold without non-trivial holomorphic vector fields,  the existence of a K\"ahler-Einstein metric is equivalent to the notion of stability defined by Paul. 

About the second ingredient, very recently Datar and Sz\'{e}kelyhidi \cite{DaSz} have adapted the results of \cite{CDS3}  to the Aubin-Yau continuity path, which gives a new proof of the theorem of Chen-Donaldson-Sun. Our focus in this paper is to give yet another proof using the Ricci flow, which means that technically we will address the issue of constructing a de-stabilizing test configuration.  Notice this can not be naively adapted from \cite{CDS3} and requires new strategy to understand the relation between the asymptotic behavior of the K\"ahler-Ricci flow and algebraic geometry. Our argument is motivated by \cite{DS2} which studies tangent cones of non-collapsed K\"ahler-Einstein limit spaces.

We now recall the basic set-up. Let $X$ be a Fano manifold, and $\omega(0)$ be a smooth K\"ahler metric in $2\pi c_1(X)$. The normalized K\"ahler-Ricci flow equation has the form 

\begin{equation} \label{eqn1-1}
\frac{\partial}{\p t} \omega(t)=\omega(t)-Ric(\omega(t)). 
\end{equation}
It is well-known that for any smooth $\omega(0)$, (\ref{eqn1-1}) has a smooth solution $\omega(t)$ for all $t\in [0, \infty)$ and the fundamental question is to understand what happens as $t$ tends to infinity. As a consequence of the Hamilton-Tian conjecture proved in \cite{CW2}, we have

\begin{thm} [Chen-Wang \cite{CW2}]  \label{thm1-2}
As $t\rightarrow\infty$,  a sequential Gromov-Hausdorff limit of $(X, \omega(t))$  is naturally a $\Q$-Fano variety endowed with a weak K\"ahler-Ricci soliton. 
\end{thm}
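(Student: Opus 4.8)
The plan is to carry out the resolution of the Hamilton--Tian conjecture, in three stages: extract a limit space from Perelman's a priori estimates, equip it with a complex-algebraic structure via a partial-$C^0$-estimate that is uniform in time, and identify the limiting equation through entropy monotonicity. First I would recall Perelman's estimates for the normalized flow \eqref{eqn1-1}: uniform bounds, independent of $t$, on the scalar curvature $R(\omega(t))$, on $\mathrm{diam}(X,\omega(t))$, on the non-collapsing constant, and on the Ricci potential $u(t)$ (normalized by $Ric(\omega(t))-\omega(t)=i\partial\bar\partial u(t)$) together with $|\nabla u(t)|$ and $|\Delta u(t)|$. Combined with the weak compactness theory for Ricci flows with bounded scalar curvature, these imply that along any sequence $t_i\to\infty$, after passing to a subsequence, $(X,\omega(t_i))$ converges in the pointed Gromov--Hausdorff sense to a compact metric space $(Z,d_Z)$, and Cheeger--Colding--Tian type structure theory (adapted to the setting where a two-sided Ricci bound is unavailable but scalar curvature is bounded and non-collapsing holds) gives a decomposition $Z=\mathcal{R}\sqcup\mathcal{S}$ with $\mathcal{R}$ open, dense, and a smooth K\"ahler manifold on which $\omega(t_i)$ converges smoothly, $\mathcal{S}$ closed of Hausdorff codimension at least $4$, and every tangent cone of $Z$ a metric cone.

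The technical core is a partial-$C^0$-estimate along the flow: there should exist $k_0\in\mathbb{N}$ and $b>0$, independent of $t$, such that $\rho_{k,\omega(t)}:=\sum_s|s|^2_{h^k}\ge b$ for every $t$ and every $k$ divisible by $k_0$, where $\{s\}$ is an $L^2$-orthonormal basis of $H^0(X,-kK_X)$ and $h$ is the metric on $-K_X$ induced by $\omega(t)$. I would prove this by the H\"ormander $L^2$-method: near a point $p\in Z$, the metric cone structure of the tangent cone, a cut-off function supported away from $\mathcal{S}$, and a weight function with the correct curvature positivity produce an anti-pluricanonical section with a definite lower bound at nearby points; uniformity in $t$ comes from Perelman's estimates, which render the flow effectively static on the relevant scales. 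The resulting Kodaira-type embeddings $X\hookrightarrow\mathbb{P}^{N_k}$, with uniform estimates, converge, so that $Z$ is homeomorphic to a normal projective variety $W\subset\mathbb{P}^{N_k}$, with the Gromov--Hausdorff convergence refined to convergence of the images as projective subvarieties.

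It then remains to check that $W$ is $\mathbb{Q}$-Fano. Normality together with the codimension-$4$ bound on $\mathcal{S}$ forces $W$ to have klt singularities, and the limiting weak K\"ahler metric $\omega_Z$ on $\mathcal{R}$ represents, via the partial-$C^0$-estimate, a positive multiple of $c_1(-K_W)$; hence $-K_W$ is $\mathbb{Q}$-Cartier and ample, i.e.\ $W$ is $\mathbb{Q}$-Fano.

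Finally I would identify the limiting equation using Perelman's $\mu$-entropy. Since $\mu(\omega(t))$ is monotone non-decreasing and bounded above, it converges, and the Ricci-flow monotonicity formula shows that its time-derivative controls the $L^2$-deviation of $\omega(t)$ from a gradient shrinking K\"ahler-Ricci soliton; passing to the limit on $\mathcal{R}$ yields a holomorphic vector field $V$ and a real function $f$ with $Ric(\omega_Z)-\omega_Z=\mathcal{L}_V\omega_Z$ on $\mathcal{R}$, with $f$ and $\omega_Z$ enjoying the bounded-potential regularity required to call $(Z,\omega_Z)$ a \emph{weak} K\"ahler-Ricci soliton. The main obstacle is the second stage: securing the partial-$C^0$-estimate with constants uniform in $t$ demands a careful local analysis of the flow near the singular set, and this is precisely where the Ricci-flow argument departs from, and is more delicate than, the fixed-metric analysis of \cite{CDS3}.
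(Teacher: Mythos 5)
This theorem is not proved in the paper at all: it is imported from Chen--Wang \cite{CW2} (the resolution of the Hamilton--Tian conjecture), and the paper only summarizes its consequences in Section~\ref{KRF}, so there is no internal proof to compare your attempt against. That said, your outline is a faithful reconstruction of the architecture of the external proof --- Perelman's a priori estimates for the normalized flow, weak compactness of the time slices, a time-uniform partial-$C^0$-estimate furnishing the projective algebraic structure on the limit, and Perelman's $\mu$-entropy monotonicity identifying the limit equation as a shrinking soliton --- and in that sense it names the right ingredients in the right order.

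As a proof, however, it is a program rather than an argument, and the gap sits exactly where you locate the ``technical core.'' There is no off-the-shelf weak compactness and regularity theory for non-collapsed Ricci flows with only scalar curvature bounds: the decomposition $Z=\mathcal R\sqcup\mathcal S$ with codimension control, the metric-cone structure of tangent cones, and the uniformity of the H\"ormander construction cannot be quoted from Cheeger--Colding--Tian and then ``adapted''; in \cite{CW2} they are established simultaneously by a long bootstrap built on the notion of polarized canonical radius, and invoking the structure theory as an input assumes essentially everything that has to be proved. Two smaller inaccuracies: the codimension statement this paper actually relies on is Minkowski (not merely Hausdorff) codimension strictly greater than $2$, which is what makes the integration by parts in the Case II discussion legitimate; and the soliton vector field must be produced as a holomorphic vector field on all of the $\Q$-Fano variety $Z$, generating a one-parameter group of automorphisms preserving $\omega_Z$ --- not merely a vector field on the regular set $\mathcal R$ --- since the later arguments of Section~\ref{KRF} (the weight decomposition of $H^0(Z,K_Z^{-r})$ and the continuity of $Z\mapsto JV_Z$) depend on this global extension, which in turn uses the normality of $Z$. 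None of this makes your outline wrong, but it cannot stand as a proof of the theorem; it is a correct table of contents for one.
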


Actually, the convergence happens in smooth topology away from the singularities of the limit variety.  The precise statement can be found in~\cite{CW2}. We will also summarize in Section~\ref{KRF} the input that we will need from \cite{CW2}, which we emphasize is indeed a stronger result than Theorem \ref{thm1-2}, concerning local \emph{uniform} convergence of K\"ahler-Ricci flow. This says that for any fixed $T>0$, as $t\rightarrow\infty$, the flow over the interval $[t-T, t+T]$ converges (by passing to subsequences) \emph{naturally} to a limit flow, which is induced by a K\"ahler-Ricci soliton (hence is self-similar). This is a crucial ingredient for us, in analogy with the important fact used in \cite{DS2} that when we rescale the metric at a fixed point, we always get the same tangent cone if we use two equivalent rescaling sequences (see Lemma 3.1 in \cite{DS2}).

The main result we shall prove in this article is 

\begin{thm} \label{thm1-3}
There is a unique Gromov-Hausdorff limit $Z$ of $(X, \omega(t))$, as a $\Q$-Fano variety endowed with a weak K\"ahler-Ricci soliton. Moreover, if $X$ is K-stable, then $Z$ is isomorphic to $X$ endowed with a smooth K\"ahler-Einstein metric. 
In particular,  $X$ admits a K\"ahler-Einstein metric if it is K-stable.
\end{thm}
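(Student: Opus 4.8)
The plan is to extract from the K\"ahler-Ricci flow an algebraic degeneration of $X$, to prove that this degeneration is essentially unique, and then to invoke K-stability to force it to be trivial.

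First I would fix $m$ large enough that $-mK_X$ is very ample, and use almost-orthonormal bases of $H^0(X,-mK_X)$ with respect to the metrics $\omega(t)$ to embed the flow as a path of smooth subvarieties in a fixed $\mathbb{P}^N$; the partial-$C^0$-estimate along the flow (part of the input from \cite{CW2}) keeps these embeddings uniformly non-degenerate for $m$ fixed. Under such embeddings the geometric convergence in Theorem~\ref{thm1-2} becomes convergence of a path $[X_t]$ in $\Hilb$, so that any sequential Gromov--Hausdorff limit corresponds to an algebraic central fiber $W$, and the self-similar (soliton) structure of the limit flow singles out a one-parameter subgroup of $GL(N+1)$, generated by the soliton vector field $v$, acting on $W$. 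One then packages the degeneration of $X$ to $W$, together with $v$, into a test configuration $\mathcal{X}$ for $X$ with $\Q$-Fano central fiber.

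The heart of the matter is the uniqueness of this limit, for which I would develop the ``general finite dimensional discussion'' advertised in the introduction. The relevant algebraic datum (the point of $\Hilb$, equivalently a Bergman-type metric) evolves in a finite-dimensional space carrying an energy functional --- the Ding functional, or the entropy/$H$-functional whose critical points are precisely the solitons --- and near the critical orbit this functional should satisfy a Lojasiewicz--Simon inequality. Feeding this inequality into the local \emph{uniform} convergence of the flow over intervals $[t-T,t+T]$ provided by \cite{CW2} (which here plays the role that Lemma~3.1 of \cite{DS2} plays for tangent cones), one rules out oscillation between inequivalent limits; hence $Z$, and with it the degeneration $\mathcal{X}$, is unique. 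Making the passage between the genuinely infinite-dimensional flow and this finite-dimensional model rigorous, and verifying the Lojasiewicz inequality in the presence of a nontrivial soliton vector field, is where I expect the principal difficulty to lie --- this is precisely the part that ``cannot be naively adapted from \cite{CDS3}''.

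Finally I would compute the modified Donaldson--Futaki invariant of $\mathcal{X}$ and show that it is $\le 0$, with equality only when $\mathcal{X}$ is a product, by identifying it with the asymptotic slope along the flow of the (monotone) energy functional above. If $X$ is K-stable this forces $\mathcal{X}$ to be trivial, so $Z\cong X$ and the weak K\"ahler-Ricci soliton descends to a genuine smooth soliton on $X$. A nonzero soliton vector field would generate a product test configuration with nonvanishing classical Futaki invariant, contradicting K-semistability; hence the soliton vector field vanishes and the soliton is a K\"ahler-Einstein metric. This would establish Theorem~\ref{thm1-3}.
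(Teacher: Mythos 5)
Your overall architecture (embed via the partial $C^0$-estimate, read off an algebraic degeneration with the soliton field acting on the central fiber, then use K-stability to force triviality) matches the paper, but the step you yourself flag as the ``principal difficulty'' --- a Lojasiewicz--Simon inequality for an energy functional near the critical (soliton) orbit --- is a genuine gap, and it is precisely the step the paper is designed to avoid. No such inequality is established here, in \cite{SW}, or elsewhere in the generality you need: \cite{SW} proves uniqueness by Lojasiewicz--Simon only under the assumption that one sequential limit is a \emph{smooth} K\"ahler--Einstein manifold, whereas in the present setting the limits are singular $\Q$-Fano varieties carrying nontrivial soliton vector fields, and neither the Ding functional nor Perelman's $\mu$-functional is known to satisfy a Lojasiewicz inequality on the relevant finite-dimensional space of Bergman-type data. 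The paper's actual uniqueness mechanism is algebraic, not analytic: from the uniform convergence of \cite{CW2} one abstracts condition $(*)$ on the increments $B_{i}=A_iA_{i-1}^{-1}$, the convexity of $\log|e^{t\Lambda}.v|$ gives a well-defined weight $d(v)$ and an associated filtration, the limit set $Lim(v)$ is shown to be compact and \emph{connected} and contained in the closure of a single $G_\Lambda$-orbit (Proposition \ref{prop2-8}), and then Property (\textbf{R}) --- reductivity of $\Aut(Z,V_Z)$ from \cite{BW} --- combined with Donaldson's equivariant slice and a minimal-stabilizer-dimension argument forces $Lim(v)$ to be a single $K_\Lambda$-orbit (Theorem \ref{thm2-12}). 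Replacing this with an unproven Lojasiewicz inequality leaves the uniqueness claim, and hence the whole theorem, unsupported.

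Two further points you pass over. First, the soliton field $\Lambda$ need not be rational, so the filtration produced by the flow does not directly give a test configuration; the paper perturbs $\Lambda$ to a nearby rational $\Gamma$ in the torus while checking that the initial ideal (hence the central fiber $\overline X$) is unchanged, and only then gets $Fut(\overline X,\Gamma)<0$ by continuity from the strict inequality $Fut(X_\infty,\Lambda)=Fut_0(X_\infty,V_\infty)<Fut_{V_\infty}(X_\infty,V_\infty)=0$, which comes from the strict convexity of the Tian--Zhu function rather than from a slope identity for a monotone energy. Second, the natural degeneration out of the flow lands on $\overline X$, and $X_\infty$ is reached only by a further, abstractly constructed one-parameter subgroup (the ``two-step degeneration''); the Futaki invariant must be transported from $X_\infty$ back to $\overline X$ by a deformation-invariance argument inside a fixed component of $\Hilb^{T}$. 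Your sketch conflates these two central fibers.
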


If we assume one of the Gromov-Hausdorff limit is smooth, then the uniqueness statement follows from the main result of \cite{SW} (based on the Lojasiewicz-Simon technique). The part on the relation with K-stability is new even if we assume the curvature of $\omega(t)$ is uniformly bounded, under which Sz\'{e}kelyhidi \cite{Sz1} and Tosatti \cite{Tosatti} have obtained some partial results with extra assumptions.  

Theorem \ref{thm1-3} follows from a finite dimensional result that we will elaborate in Section~\ref{sec:fdim}. The proof has its own interest.  Notice when $X$ does not admit a K\"ahler-Einstein metric, in \cite{CDS3} (hence also in \cite{DaSz}) the de-stabilizing test configuration is constructed \emph{abstractly} using the theory of Luna slices, so is in general not canonical. In our proof of Theorem \ref{thm1-3}, when $X$ is K-unstable, we will construct a  de-stabilizing test configuration (to be more precise, a filtration) \emph{naturally} out of the K\"ahler-Ricci flow $\omega(t)$.  We expect this is the unique optimal degeneration in an appropriate sense, and we will discuss this further in Section 3. 

We believe that the strategy developed here should apply to a wider class of problems.  
As an example, we will discuss an application  to the Calabi flow in Section 4.

\

As a direct corollary of the proof of Theorem \ref{thm1-3}, we also obtain a corresponding result for K\"ahler-Ricci solitons. This has been recently proved by Datar-Szekelyhidi \cite{DaSz}, using the classical continuity path.  Again the K\"ahler-Ricci flow proof seems more intrinsic. 

\begin{cor} \label{cor1-5}
Given a holomorphic vector field $V_X$ on $X$, then $(X, V_X)$ admits a K\"ahler-Ricci soliton if and only if it is relatively K-stable in the sense of Berman-Witt-Nystr\"{o}m \cite{BW}. 
\end{cor}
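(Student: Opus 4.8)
The plan is to deduce Corollary~\ref{cor1-5} from the proof of Theorem~\ref{thm1-3} rather than its statement, by running the \emph{modified} (i.e.\ $V_X$-twisted) K\"ahler-Ricci flow in place of the ordinary one. Concretely, I would fix a maximal compact subgroup $K$ of $\Aut(X)$ whose complexification contains the one-parameter group generated by $V_X$, and choose the initial metric $\omega(0)$ to be $K$-invariant; then the flow $\omega(t)$ stays $K$-invariant for all time. The first step is to observe that every ingredient from \cite{CW2} summarized in Section~\ref{KRF} — local uniform convergence over time intervals $[t-T,t+T]$, the limit being a $\Q$-Fano variety $Z$ carrying a weak K\"ahler-Ricci soliton with soliton vector field $W$ — is equivariant: the $K$-action passes to the limit, and $W$ lies in the Lie algebra of $\Aut(Z)$.

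The second step is the finite-dimensional argument of Section~\ref{sec:fdim}, carried out relative to the torus $T\subset\Aut(X)$ generated by $V_X$ (and more generally relative to a maximal torus, or to the centralizer of $V_X$). In the unstable case this produces, naturally out of the flow, a $T$-equivariant filtration / test configuration whose Futaki-type invariant — now the \emph{modified} Futaki invariant $\Fut_{V_X}$ in the sense of Berman-Witt-Nystr\"om \cite{BW} — is negative, contradicting relative K-stability; this direction needs essentially no new idea beyond bookkeeping the $V_X$-twist through the weighted functionals (the twisted $\mathcal L$/entropy-type functional that is monotone along the modified flow plays the role the Ding/Mabuchi-type functional played before). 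Conversely, if $(X,V_X)$ is relatively K-stable, the argument forces $Z\cong X$ and the weak soliton to be smooth, so $\omega(t)$ converges to a genuine K\"ahler-Ricci soliton on $X$ with soliton field $V_X$ (after absorbing an automorphism, using that $V_X$ is determined up to conjugacy by the $K$-invariance). The easy direction — a K\"ahler-Ricci soliton implies relative K-stability — is already known (Berman-Witt-Nystr\"om \cite{BW}), so strictly only the ``if'' direction requires the flow.

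The main obstacle I anticipate is matching the soliton vector field produced by the flow with the \emph{prescribed} field $V_X$. The compactness theory hands us \emph{some} holomorphic vector field $W$ on the limit $Z$; one must show that when $Z\cong X$ this $W$ is conjugate to $V_X$ inside $\Aut(X)$. This is where the $K$-invariance of the flow and the uniqueness of K\"ahler-Ricci solitons (Tian-Zhu) with a fixed soliton field get used: the soliton field of any $K$-invariant soliton in $2\pi c_1(X)$ is canonically determined, and it must coincide with the one singled out by the reductivity of $\Aut(X)$, which is exactly $V_X$. A secondary technical point is checking that all the weighted energy functionals and the monotonicity formulas from Sections~\ref{KRF}--\ref{sec:fdim} genuinely have $V_X$-twisted analogues with the same formal properties (properness along the modified flow, behavior under degeneration to test configurations); I expect this to be routine but notationally heavy, so I would organize it by stating a single ``modified'' version of the key finite-dimensional proposition and then noting that the proof of Theorem~\ref{thm1-3} applies verbatim with $\Fut$ replaced by $\Fut_{V_X}$.
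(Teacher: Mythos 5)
Your proposal diverges from the paper at the very first step, and the divergence introduces a gap that the paper is specifically designed to avoid. The paper does \emph{not} run the modified ($V_X$-twisted) K\"ahler-Ricci flow. It runs the \emph{ordinary} flow of Theorem \ref{thm1-3} with an $H$-invariant initial metric (where $H$ is the compact group generated by $JV_X$), so that all of the machinery already established — the Chen-Wang compactness from \cite{CW2}, Proposition \ref{prop3-2}, the construction of $\overline X$, $X_\infty$ and the soliton field $\Lambda$ — applies verbatim. Your route instead requires a twisted analogue of the resolution of the Hamilton-Tian conjecture (uniform local convergence of the \emph{modified} flow to weak solitons on $\Q$-Fano limits, Perelman-type estimates for the twisted potentials, a twisted $\mu$-functional monotonicity, etc.). None of this is available in the references the paper relies on, and it is far from "routine but notationally heavy": it is the deepest input of the whole argument, and you would have to re-prove it for a different PDE. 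This is a genuine missing ingredient, not bookkeeping.

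The second gap is in your resolution of what you correctly identify as the main obstacle: matching the soliton field produced by the limit with the prescribed $V_X$. Your argument — that the soliton field of a $K$-invariant soliton is "canonically determined" and "must coincide with ... exactly $V_X$" — is circular: $V_X$ is an \emph{arbitrary} prescribed holomorphic vector field, while the soliton field is the unique critical point of the Tian-Zhu/Berman-Witt-Nystr\"om functional $F$ on $\t$, and there is no a priori reason these agree; that agreement is precisely the content to be proven. The paper proves it by a purely algebraic pincer using relative K-stability: on one side, if $(\overline X,\Lambda)\not\cong (X,V)$ then $\Fut_V(\overline X,\Gamma)>0$ for rational $\Gamma$ near $\Lambda$, hence $\Fut_V(\overline X,\Lambda)\geq 0$; on the other side, writing $\Fut_V(X_\infty,\Lambda)=\Fut_V(X_\infty,\Lambda-V)+\Fut_V(X_\infty,V)$ and using the strict convexity of $F$ together with the soliton condition $\Fut_\Lambda(X_\infty,\cdot)=0$ and the vanishing $\Fut_V(X_\infty,V)=0$ (propagated along the connected component of $\Hilb^V$ as in Lemma \ref{lem3-1}), one gets $\Fut_V(X_\infty,\Lambda)\leq 0$ with equality iff $V=\Lambda$. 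Forcing both inequalities yields $V=\Lambda$ and hence $(\overline X,\Lambda)\cong(X_\infty,\Lambda)\cong (X,V)$. This identification step is the real content of the corollary and is absent from your proposal; without it (or the unproven twisted compactness theory), the argument does not close.
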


The ``only if" direction is proved in \cite{BW}, and the ``if" direction is a conjecture in \cite{BW}.

\section{Finite dimensional results}
\label{sec:fdim}

The results of this section are motivated by \cite{DS2}.  The discussion here focuses on a finite dimensional problem, so is technically simpler than the situation studied in \cite{DS2}. For this reason we write down a self-contained argument. 

\

 Fix a finite dimensional Hermitian vector space $E$. We denote $K=U(E)$ and $G=GL(E)$. Let $\underline{A}=\{A_i\}$ be a sequence of elements in $G$ with $A_0=\text{Id}$. Let $B_i=A_iA_{i-1}^{-1}$.  We assume there is an element  $\Lambda$  in $\sqrt{-1}Lie(K)$ with spectrum $\mathcal S\subset \R$,  such that the following holds: 

\

\textbf{$(*)$}: For any subsequence $\{\alpha\}\subset \{i\}$, passing to a further subsequence,  $B_{\alpha+1}$ and $B_{\alpha+2}$ both converge uniformly to a limit $g e^{\Lambda }g^{-1}$ for some $g\in K$.

\

\noindent Notice the element $g\in K$ is determined by the subsequence up to right multiplication by an element in $K_\Lambda=\{h\in K| h\Lambda h^{-1}=\Lambda\}$. 
It is easy to see $(*)$ is equivalent to 

\

\textbf{$(*)'$}: There is a sequence $g_i\in K$ with $g_0=\text{Id}$, such that 
$$\lim_{i\rightarrow\infty}B_{i} g_i e^{-\Lambda}g_i^{-1}=\lim_{i\rightarrow\infty} g_{i-1}^{-1}g_i=\text{Id}.$$

\

\noindent Actually, by the compactness of $K$, the two identities in \textbf{$(*)'$}  imply \textbf{$(*)$}.   On the other hand,  suppose \textbf{$(*)$} holds, then we can find a sequence $g_i\in K$ which satisfies \textbf{$(*)'$} 
except the last condition $\displaystyle \lim_{i\rightarrow\infty} g_{i-1}^{-1}g_i=\text{Id}$ is replaced by that $g_{i-1}^{-1}g_i$ converges by sequence to elements in $K_\Lambda$. Then we can simply change $g_i$ to $g_iu_i$ for appropriate choice of a sequence  $u_i\in K_\Lambda$.

\

A special case is when $A(i)=\exp(i\Lambda)$. This  corresponds to a sequence of points on a  geodesic ray in the symmetric space $G/K$. Condition $(*)$ roughly means that we are close to this special case in a certain way.  The motivation for condition $(*)$ will be seen more clearly in the next section, where we evolve geometric structures naturally towards limits that are self-similar.  
In general  we have to allow the ``gauge transformation" $g$, and for different subsequences we may get different $g$.  

\

Let  $V$ be a complex representation of $G$, and $\mathcal S(V)$ the spectrum of the $\Lambda$ action on $V$.  Our goal is to understand the limit set of $\{[A_i. v]\}$ in $\P(V)$, for a non-zero vector $v$. The main results are Proposition \ref{prop2-8}, \ref{prop2-9} and Theorem \ref{thm2-12}. 

In our discussion below, we will choose a $K$-invariant metric  on $V$, but it is not hard to see in the end the results are independent of the particular choice of  the metric.

\begin{lem}\label{lem2-0}
For any  $v\in V\setminus\{0\}$, the following holds 
$$\log |e^{2\Lambda }. v|+\log |v|\geq 2\log |e^{\Lambda }.v|, $$ and the equality holds if and only if $v$ is an eigenvector of $\Lambda $.
\end{lem}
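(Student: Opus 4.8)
The plan is to diagonalize $\Lambda$ and reduce everything to a one-variable convexity inequality. Since $\Lambda \in \sqrt{-1}\,\mathrm{Lie}(K)$ acts on the Hermitian space $V$ as a Hermitian operator, we may pick an orthonormal eigenbasis $\{e_j\}$ of $V$ with $\Lambda . e_j = \lambda_j e_j$, $\lambda_j \in \mathcal S(V) \subset \R$. Writing $v = \sum_j v_j e_j$, we have $|e^{s\Lambda}.v|^2 = \sum_j e^{2s\lambda_j}|v_j|^2$ for any $s \in \R$. So the statement to prove is exactly
\begin{equation*}
\log\Bigl(\sum_j e^{4\lambda_j}|v_j|^2\Bigr) + \log\Bigl(\sum_j |v_j|^2\Bigr) \geq 2\log\Bigl(\sum_j e^{2\lambda_j}|v_j|^2\Bigr),
\end{equation*}
with equality iff $v$ is an eigenvector, i.e. all nonzero $v_j$ share a common $\lambda_j$.

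The cleanest route is to introduce $f(s) := \log |e^{s\Lambda}.v|^2 = \log\bigl(\sum_j e^{2s\lambda_j}|v_j|^2\bigr)$, a function of $s \in \R$, and observe that the desired inequality is precisely $f(2) + f(0) \geq 2f(1)$, i.e. midpoint convexity of $f$ at the point $s=1$. So it suffices to show $f$ is convex on $\R$. This is the classical fact that $s \mapsto \log \sum_j c_j e^{2s\lambda_j}$ (with $c_j = |v_j|^2 \geq 0$, not all zero) is convex: compute $f''(s)$ and recognize it, up to the positive factor $\bigl(\sum_j c_j e^{2s\lambda_j}\bigr)^{-2}$, as
\begin{equation*}
\Bigl(\sum_j c_j e^{2s\lambda_j}\Bigr)\Bigl(\sum_j 4\lambda_j^2 c_j e^{2s\lambda_j}\Bigr) - \Bigl(\sum_j 2\lambda_j c_j e^{2s\lambda_j}\Bigr)^2 \geq 0,
\end{equation*}
which is Cauchy–Schwarz applied to the vectors with components $\sqrt{c_j e^{2s\lambda_j}}$ and $2\lambda_j \sqrt{c_j e^{2s\lambda_j}}$. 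Alternatively, and perhaps more transparently for the equality discussion, one can apply the Cauchy–Schwarz (or Hölder) inequality directly: $\sum_j e^{2\lambda_j}|v_j|^2 = \sum_j \bigl(e^{2\lambda_j}|v_j|^2\bigr)^{1/2}\bigl(|v_j|^2\bigr)^{1/2} \leq \bigl(\sum_j e^{4\lambda_j}|v_j|^2\bigr)^{1/2}\bigl(\sum_j |v_j|^2\bigr)^{1/2}$, then take logarithms.

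The only real subtlety — and the step I'd be most careful about — is the equality case. In the Cauchy–Schwarz formulation, equality holds iff the two vectors $\bigl(e^{2\lambda_j}|v_j|^2\bigr)_j^{1/2}$ and $\bigl(|v_j|^2\bigr)_j^{1/2}$ are proportional, which after chasing indices means: for every $j$ with $v_j \neq 0$, the value $e^{2\lambda_j}$ is the same constant $c$. Since $s \mapsto e^{2s}$ is injective on $\R$, this forces all such $\lambda_j$ to equal a common value $\mu = \tfrac12\log c$, i.e. $v$ lies in the $\mu$-eigenspace of $\Lambda$, hence is an eigenvector. Conversely if $\Lambda.v = \mu v$ then $|e^{s\Lambda}.v| = e^{s\mu}|v|$ and both sides equal $2\mu + 2\log|v|$, giving equality. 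I would present the Cauchy–Schwarz proof with this equality analysis, since it handles existence of equality and its characterization in one stroke; the convexity-of-$f$ viewpoint is worth a remark because it is the conceptual reason the inequality (and its iterates $f(k+1)+f(k-1)\geq 2f(k)$, used later when comparing $B_{\alpha+1}$ and $B_{\alpha+2}$) holds.
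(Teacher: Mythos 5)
Your proposal is correct and takes essentially the same route as the paper, which simply cites the convexity of $t\mapsto\log|e^{t\Lambda}.v|$; you have just supplied the standard Cauchy--Schwarz verification of that convexity together with a careful treatment of the equality case. Nothing is missing.
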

This follows from the convexity of the function $\log |e^{t\Lambda }.v|$. This elementary result  is the key to the following discussion.

\

Denote $f_i(v)=\log |A_i. v|$. Then we have

\begin{lem} \label{lem2-1}
Given $\mu\notin \mathcal S(V)$, there is an $I=I(\mu)$, such that for all $v\neq 0$ and $j>i\geq I$, if  $f_{i+1}(v)\geq f_{i}(v)+\mu$, then  $f_{j+1}(v)>f_{j}(v)+\mu$. 
\end{lem}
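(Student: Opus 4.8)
\textbf{Proof plan for Lemma \ref{lem2-1}.}

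The plan is to exploit the near-self-similar structure encoded in condition $(*)'$ together with the convexity estimate of Lemma \ref{lem2-0}, writing $f_{i+1}(v) - f_i(v) = \log|B_{i+1} A_i . v| - \log|A_i . v|$ and comparing $B_{i+1}$ with the model transformation $g_i e^{\Lambda} g_i^{-1}$. First I would fix $\mu \notin \mathcal{S}(V)$ and set $\delta = \operatorname{dist}(\mu, \mathcal{S}(V)) > 0$. Writing $w = A_i . v$ and $w' = g_i^{-1} A_i . v$ (so $\log|g_i^{-1} e^{\Lambda} g_i . w| = \log|e^{\Lambda}. w'|$ since $g_i \in K$ is an isometry of the $K$-invariant metric on $V$), the key point is that the ``model'' increment $\log|e^{\Lambda}. w'| - \log|w'|$ is the slope at $t=1$ of the convex function $t \mapsto \log|e^{t\Lambda}. w'|$; its asymptotic slopes as $t \to \pm\infty$ lie in $\mathcal{S}(V)$, so this increment can only land in a $\delta$-neighborhood of $\mu$ for finitely many ``scales'' — more precisely, if the model increment at step $i$ exceeds $\mu$, I want to show the model increment at step $i+1$ exceeds $\mu + (\text{something})$, using convexity to get the monotone improvement.

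The second ingredient handles the error between $B_{i+1}$ and $g_i e^{\Lambda} g_i^{-1}$. By $(*)'$, $B_{i+1} g_i e^{-\Lambda} g_i^{-1} \to \operatorname{Id}$ and $g_{i-1}^{-1} g_i \to \operatorname{Id}$, so for $i \geq I(\mu)$ large these are within any prescribed $\epsilon$ of the identity in the operator norm. One then estimates $\big|\log|B_{i+1} u| - \log|g_i e^{\Lambda} g_i^{-1} u|\big| \leq C(\Lambda) \epsilon$ uniformly in $u \neq 0$ (here $C(\Lambda)$ depends on the operator norm of $e^{\pm\Lambda}$ acting on $V$), and similarly the drift of the gauge $g_i$ between consecutive steps contributes another $O(\epsilon)$ error. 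Choosing $\epsilon$ small relative to $\delta$, the hypothesis $f_{i+1}(v) \geq f_i(v) + \mu$ forces the model increment to be at least $\mu - C\epsilon$, which (being bounded away from $\mathcal{S}(V)$) by the convexity/monotonicity step forces the next model increment to be strictly larger, and then adding back the $O(\epsilon)$ error on the next step still leaves $f_{j+1}(v) > f_j(v) + \mu$. Iterating this from $i$ up to $j$ gives the claim; I would actually prove the one-step statement ($j = i+1$) with a quantitative gain and then induct on $j$.

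The main obstacle I anticipate is making the convexity/monotonicity step genuinely \emph{strict} and \emph{quantitative} in a way that survives the $O(\epsilon)$ perturbations accumulating over the unbounded range $i < j$. Lemma \ref{lem2-0} gives $\log|e^{2\Lambda}.w'| + \log|w'| \geq 2\log|e^{\Lambda}.w'|$ with equality only at eigenvectors, but I need a uniform lower bound on the gap when the increment is bounded away from the spectrum — essentially a statement that a convex function whose derivative at $1$ is distance $\geq \delta$ from all its asymptotic slopes must have derivative at $2$ larger by a definite amount depending only on $\delta$ and the spectral data $\mathcal{S}(V)$. This requires decomposing $w'$ into $\Lambda$-eigenspaces and carefully tracking how the ``center of mass'' of the weight distribution (weighted by $e^{2t\lambda}|w'_\lambda|^2$) moves; the subtlety is that near a spectral value the gap degenerates, so one must use precisely the hypothesis $\mu \notin \mathcal{S}(V)$ to rule this out, and ensure the resulting constant does not depend on $v$.
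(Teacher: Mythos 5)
Your plan is workable but takes a genuinely different route from the paper, and it leaves its hardest step unproven. The paper's proof is a soft contradiction argument: if the statement fails for arbitrarily large $I$, one extracts (after the combinatorial adjustment of passing to the \emph{last} index $\alpha$ in $[i,j)$ at which the increment is still $\geq\mu$) a sequence $\alpha\to\infty$ and unit-normalized vectors $v_\alpha$ with $f_{\alpha+1}(v_\alpha)\geq f_\alpha(v_\alpha)+\mu\geq f_{\alpha+2}(v_\alpha)-f_{\alpha+1}(v_\alpha)$; condition $(*)$ then lets one pass to a limit $w$ with $\log|e^{2\Lambda}.w|-\log|e^{\Lambda}.w|\leq\mu\leq\log|e^{\Lambda}.w|-\log|w|$, and the equality case of Lemma \ref{lem2-0} forces $w$ to be a $\mu$-eigenvector, contradicting $\mu\notin\mathcal S(V)$. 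No quantitative estimates or induction on $j$ are needed. Your approach instead runs a direct induction with explicit error propagation, and its engine is a \emph{uniform quantitative} strengthening of Lemma \ref{lem2-0}: there exists $c=c(\mu,\Lambda,V)>0$ such that for every unit vector $u$, either $\log|e^{\Lambda}.u|-\log|u|<\mu-c$ or $\log|e^{2\Lambda}.u|-\log|e^{\Lambda}.u|>\mu+c$. This is exactly the step you flag as the main obstacle, and your proposed route to it (eigenspace decomposition and tracking the center of mass of the weight distribution) is more work than necessary: the cleanest proof of that uniform dichotomy is itself a compactness argument on the unit sphere of $V$ combined with the equality case of Lemma \ref{lem2-0} — which is morally the paper's proof, applied once and for all to the model transformation $e^{\Lambda}$ rather than to the sequence $B_\alpha$.

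Two further points. First, your literal phrasing "if the model increment at step $i$ exceeds $\mu$ then the model increment at step $i+1$ exceeds $\mu$ plus something" is true, but only because the dichotomy above holds; convexity alone gives only a non-strict improvement (an eigenvector has constant increment), so the strict uniform gain really does require $\mu\notin\mathcal S(V)$ plus a compactness or variance argument — you correctly sense this. Second, your worry about errors accumulating over the unbounded range $i<j$ is resolved precisely by the self-improving form of the dichotomy: each step resets the model increment to at least $\mu+c$, which absorbs the $O(\epsilon)$ loss from $(*)'$ and from the gauge drift $g_k^{-1}g_{k+1}$, so the induction invariant "$f_{k+1}(v)-f_k(v)>\mu$ and model increment $\geq\mu-c$" propagates with no accumulation. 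So the plan closes, but only after the uniform dichotomy is actually established; as written, the proposal is an outline with its key lemma asserted rather than proved, and the paper's argument obtains the same conclusion in a few lines by deferring all compactness to a single contradiction at the end.
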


\begin{proof}
Suppose this is not true, then we may find  subsequences $\{\alpha\}$  tending to infinity, and $v_\alpha\in V$, such that 
\begin{align*}
 f_{\alpha+1}(v_\alpha)\geq f_\alpha(v_\alpha)+\mu, \quad f_{\alpha+2}(v_\alpha)\leq f_{\alpha+1}(v_\alpha)+\mu. 
\end{align*}
Note that we have adjusted the sequence to obtain the above inequalities. 
Without loss of generality we may normalize $v_\alpha$ such that $|A_\alpha. v_\alpha|=1$. 
By $(*)$ and by passing to a subsequence, we may assume $A_{\alpha+k}A_{\alpha}^{-1}$ converges $ge^{k \Lambda }g^{-1}$ for $k=0, 1, 2$ and some $g\in K$. 
Passing to a further subsequence we may also assume $A_{\alpha+k}. v_\alpha$ converges to $w_k$,  with $w_k=ge^{k\Lambda }g^{-1}. w_0$. 
Let $w=g^{-1}. w_0$, then by our assumption  we have 
\begin{align*}
  \log |e^{2\Lambda }. w|-\log |e^{\Lambda }.w|\leq \mu \leq \log |e^\Lambda . w|-\log |w|. 
\end{align*}
By Lemma \ref{lem2-0} we conclude $w$ is an eigenvector of $\Lambda $, with eigenvalue $\mu$. This contradicts our choice of $\mu$.
\end{proof}

\begin{lem}\label{lem2-2}
$\displaystyle d(v)=\lim_{i\rightarrow\infty} i^{-1}f_i(v)$ is well-defined and belongs to $\mathcal S(V)$. 
 \end{lem}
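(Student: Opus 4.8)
The plan is to show that the sequence $a_i = f_i(v)$ is, up to lower-order terms, monotone in a slope sense governed by the eigenvalues in $\mathcal{S}(V)$, so that $i^{-1}f_i(v)$ converges to one of those eigenvalues. First I would record the elementary fact that $f_{i+1}(v) - f_i(v) = \log|B_{i+1}. (A_i.v)| - \log|A_i.v|$, so the increments $f_{i+1}(v) - f_i(v)$ are (for large $i$) approximately equal to $\log|B_{i+1}. u| - \log|u|$ with $u = A_i.v$, and by $(*)$ each $B_{i+1}$ is close to some conjugate $g e^{\Lambda} g^{-1}$ of $e^{\Lambda}$, whose logarithmic-norm distortion lies in the interval $[\min\mathcal{S}, \max\mathcal{S}]$. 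This already gives that the increments $f_{i+1}(v) - f_i(v)$ are bounded, hence $i^{-1}f_i(v)$ is bounded, so it suffices to prove convergence and then identify the limit.

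For convergence, the key is Lemma \ref{lem2-1}: for each $\mu \notin \mathcal{S}(V)$ there is $I(\mu)$ past which the property ``$f_{i+1}(v) \geq f_i(v) + \mu$'' is inherited by all later indices. I would use this as follows. Fix two consecutive values $\mu_1 < \mu_2$ of the (finite) spectrum $\mathcal{S}(V)$, pick $\mu_1 < \mu < \mu_2$, and observe that by Lemma \ref{lem2-1} exactly one of two things happens eventually: either $f_{i+1}(v) - f_i(v) \geq \mu$ for all large $i$, or $f_{i+1}(v) - f_i(v) < \mu$ for all large $i$ (if it ever dips below $\mu$ after $I(\mu)$ it can never have been above, reading the lemma contrapositively in the right direction). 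Running this dichotomy over the finitely many gaps of $\mathcal{S}(V)$ pins down, for all large $i$, an integer $k$ with $\mu_k \leq f_{i+1}(v) - f_i(v) < \mu_{k+1}$; more precisely, I would argue that $\limsup_i (f_{i+1}(v) - f_i(v))$ and $\liminf_i (f_{i+1}(v) - f_i(v))$ lie in the same closed gap-complement, forcing $\liminf$ and $\limsup$ of the difference quotient to coincide and to equal some element of $\mathcal{S}(V)$. A clean way to package this: for any $\mu \notin \mathcal{S}(V)$, Lemma \ref{lem2-1} implies $i^{-1}f_i(v)$ stays eventually on one side of $\mu$ (since a single increment $\geq \mu$ propagates, we get $f_i(v) \geq f_I(v) + (i-I)\mu$, hence $d(v) \geq \mu$ if $\liminf$ of the quotient is $\geq \mu$; otherwise $d(v) \leq \mu$), so $\liminf_i i^{-1}f_i(v) = \limsup_i i^{-1}f_i(v)$ and the common value cannot lie in any gap of $\mathcal{S}(V)$; since it is also bounded, it lies in $\mathcal{S}(V)$.

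I expect the main obstacle to be the bookkeeping in the second step: converting the ``increment-inheritance'' statement of Lemma \ref{lem2-1} into a genuine statement that $\liminf$ and $\limsup$ of $i^{-1}f_i(v)$ agree. The subtlety is that Lemma \ref{lem2-1} controls single-step increments, not the average, so one has to be careful that the hypothesis ``$f_{i+1}(v) \geq f_i(v)+\mu$ for \emph{some} large $i$'' is actually triggered when the average slope exceeds $\mu$ — this requires comparing telescoped sums with individual increments and using that all increments are uniformly bounded (so the average being $> \mu + \varepsilon$ infinitely often forces individual increments $> \mu$ infinitely often, at which point inheritance kicks in). The rest — boundedness of increments, finiteness of $\mathcal{S}(V)$, and the elementary convexity in Lemma \ref{lem2-0} — is routine.
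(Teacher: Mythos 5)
Your proof of the \emph{existence} of the limit is correct and follows essentially the paper's route: Lemma \ref{lem2-1} gives, for each $\mu\notin\mathcal S(V)$, the dichotomy that the increments $f_{i+1}(v)-f_i(v)$ are eventually all $>\mu$ or eventually all $<\mu$, which forces $\liminf_i i^{-1}f_i(v)=\limsup_i i^{-1}f_i(v)$ (equivalently, forces the increments themselves to converge, after which convergence of the averages is the elementary Ces\`aro step the paper alludes to). The boundedness of the increments via $(*)$ and the compactness of $\{ge^{\Lambda}g^{-1}:g\in K\}$ is also fine.

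The gap is in the claim that the limit lies in $\mathcal S(V)$. You assert that ``the common value cannot lie in any gap of $\mathcal S(V)$,'' but the dichotomy you extract from Lemma \ref{lem2-1} only yields, for each $\mu\notin\mathcal S(V)$, that $d(v)\geq\mu$ or $d(v)\leq\mu$ --- which is vacuously satisfied when $d(v)=\mu$, so no contradiction arises if $d(v)\notin\mathcal S(V)$. Concretely, a sequence whose increments are identically equal to an arbitrary real number $\ell$ satisfies the conclusion of Lemma \ref{lem2-1} for every $\mu$, so the \emph{statement} of that lemma alone can never force $\ell\in\mathcal S(V)$. What is needed is to rerun the compactness argument from the \emph{proof} of Lemma \ref{lem2-1} (exactly as the paper does again in Lemma \ref{lem2-3}): knowing that $f_{\alpha+1}(v)-f_{\alpha}(v)$ and $f_{\alpha+2}(v)-f_{\alpha+1}(v)$ both converge to $\ell$, pass to a subsequence where $B_{\alpha+1}$ and $B_{\alpha+2}$ converge to $ge^{\Lambda}g^{-1}$, normalize $|A_\alpha. v|=1$, extract a nonzero limit $w$ of $g^{-1}A_\alpha. v$, and conclude $\log|e^{2\Lambda}. w|-\log|e^{\Lambda}. w|=\ell=\log|e^{\Lambda}. w|-\log|w|$; the equality case of Lemma \ref{lem2-0} then says $w$ is a $\Lambda$-eigenvector with eigenvalue $\ell$, so $\ell\in\mathcal S(V)$. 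You mention the convexity statement of Lemma \ref{lem2-0} only as a ``routine'' ingredient at the end, but its equality case is precisely the step that pins the limit to the spectrum; without invoking it, the membership claim remains unproved.
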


 \begin{proof} 
It follows from Lemma~\ref{lem2-1} that  the limit 
\begin{align}
\tilde d(v)=\lim_{i\rightarrow\infty}(f_{i+1}(v)-f_{i}(v))    \label{eqn:dv}
\end{align}
exists and $\tilde d(v)\in \mathcal S(V)$. It is then an elementary exercise to show that 
\begin{align}
  d(v)=\tilde d(v).   \label{eqn:ddv}
\end{align}
 \end{proof} 
 
 \begin{rmk}
If $A_i=\exp(i\Lambda)$ then $d(v)$ is the well-known weight function associated to a geodesic ray, as the slope at infinity of the Kempf-Ness function.  
 \end{rmk} 
 
 For $v\in V\setminus\{0\}$, we denote by $[v]$ the corresponding point in $\P(V)$. Let $U$ be the eigenspace of $\Lambda$ with eigenvalue $d(v)$. 
 
 \begin{lem} \label{lem2-3}
Given any subsequence $\{\alpha\}$ so that $B_{\alpha+1}$ converges to $ge^{\Lambda }g^{-1}$,   passing to a further subsequence $\{\beta\}$, $[A_\beta. v]$ converges to a limit $[w]$ with $g^{-1}. w\in U$. 
 \end{lem}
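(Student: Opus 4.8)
The plan is to normalize $v$ so that $|A_\beta. v|=1$ along the subsequence and extract geometric limits as in the proof of Lemma \ref{lem2-1}. Concretely, set $u_\beta = A_\beta. v/|A_\beta. v|$; after passing to a further subsequence I may assume $u_\beta\to w$ with $|w|=1$, and by $(*)$ (applied along $\{\alpha\}$, which contains $\{\beta\}$) that $A_{\beta+k}A_\beta^{-1}\to ge^{k\Lambda}g^{-1}$ for $k=0,1,\dots$, so that $[A_\beta. v]\to[w]$. It remains to show $g^{-1}. w$ lies in $U$, the $d(v)$-eigenspace of $\Lambda$.

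The key is to track the behavior of $f_{\beta+k}(v)-f_{\beta+k-1}(v)$ for $k\ge 1$. By Lemma \ref{lem2-2} (more precisely \eqref{eqn:dv}), this difference converges to $\tilde d(v)=d(v)$ as $\beta\to\infty$, for every fixed $k\ge 1$. On the other hand, $f_{\beta+k}(v)-f_{\beta+k-1}(v) = \log|A_{\beta+k}. v| - \log|A_{\beta+k-1}. v| = \log|A_{\beta+k}A_\beta^{-1}. u_\beta| - \log|A_{\beta+k-1}A_\beta^{-1}. u_\beta|$. Passing to the limit using $A_{\beta+k}A_\beta^{-1}\to ge^{k\Lambda}g^{-1}$ and $u_\beta\to w$, I obtain, setting $\tilde w = g^{-1}. w$,
\begin{align*}
\log|e^{k\Lambda}. \tilde w| - \log|e^{(k-1)\Lambda}. \tilde w| = d(v) \qquad \text{for all } k\ge 1.
\end{align*}
Summing telescopically gives $\log|e^{k\Lambda}.\tilde w| = \log|\tilde w| + k\, d(v)$ for all $k\ge 1$. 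The function $t\mapsto \log|e^{t\Lambda}.\tilde w|$ is convex (this is the input behind Lemma \ref{lem2-0}); a convex function that is affine on the infinite set $\{0,1,2,\dots\}$ with slope $d(v)$ must in fact be affine on all of $\R$, and then Lemma \ref{lem2-0} forces $\tilde w$ to be an eigenvector of $\Lambda$. Its eigenvalue is read off from the slope $d(v)$, hence $\tilde w = g^{-1}. w\in U$, as desired.

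The main technical point to handle carefully is the interchange of the two limiting processes: I am letting $\beta\to\infty$ while simultaneously using convergence $A_{\beta+k}A_\beta^{-1}\to ge^{k\Lambda}g^{-1}$ for each fixed $k$. Since all the maps involved act on the fixed finite-dimensional space $V$ and the convergence $A_{\beta+k}A_\beta^{-1}\to ge^{k\Lambda}g^{-1}$ is uniform (as operators on $V$), together with $u_\beta\to w$ and $|w|=1\neq 0$ so that no degeneration of norms occurs, the passage to the limit in each displayed identity is legitimate for every fixed $k$; one does not need any uniformity in $k$. A minor subtlety is that $(*)$ a priori only provides convergence of $B_{\alpha+1},B_{\alpha+2}$, i.e. of $A_{\alpha+1}A_\alpha^{-1}$ and $A_{\alpha+2}A_{\alpha+1}^{-1}$, along a further subsequence; but composing these (and iterating, extracting nested subsequences and diagonalizing) yields convergence of $A_{\beta+k}A_\beta^{-1}$ for all $k\ge 0$ to $ge^{k\Lambda}g^{-1}$ with the \emph{same} $g$, which is exactly what the argument uses.
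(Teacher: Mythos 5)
Your proof is correct and follows essentially the same route as the paper: the paper simply records that $\lim_{\alpha\to\infty}(f_{\alpha+2}(v)-f_{\alpha+1}(v))=\lim_{\alpha\to\infty}(f_{\alpha+1}(v)-f_{\alpha}(v))=d(v)$ and then reruns the normalization-and-limit argument from the proof of Lemma \ref{lem2-1}, which is exactly your computation restricted to $k=1,2$ --- the only values needed to invoke the equality case of Lemma \ref{lem2-0}. Your extension to all $k\geq 0$ (with the diagonalization to get the same $g$) is harmless but unnecessary, and the assertion that a convex function affine on $\{0,1,2,\dots\}$ is affine on all of $\R$ should really be ``affine on $[0,\infty)$'' for a general convex function, though for $t\mapsto\log|e^{t\Lambda}.\tilde w|$ either version suffices to force $\tilde w$ to be an eigenvector with eigenvalue $d(v)$.
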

 
 \begin{proof}
 From the above discussion, we have
 $$\lim_{\alpha\rightarrow\infty}(f_{\alpha+2}(v)-f_{\alpha+1}(v))=\lim_{\alpha\rightarrow\infty} (f_{\alpha+1}(v)-f_\alpha(v))=d(v).$$
  Then the conclusion follows from the proof of Lemma \ref{lem2-1}.   
 \end{proof}
 
 In terms of condition $(*)'$, this means that $[g_{\beta}^{-1}A_\beta. v]$ converges to $[g^{-1}. w]\in \P(U)$.  Then we define
the limit set $Lim(v)$ to be the union of the $K_\Lambda$-orbits of all possible sequential limits of $[g_{i}^{-1}A_i. v]$.  By the above lemma $Lim(v)$ is a subset of $\P(U)$, and it is independent of the choice of $g_i$ in $(*)'$. 
Notice by definition any sequential limit of $[A_i. v]$ is in the $K$-orbit of some element in $Lim(v)$.

  \begin{lem} \label{lem2-4}
  $Lim(v)$ is compact and connected. 
  \end{lem}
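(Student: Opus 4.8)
The plan is to show compactness and connectedness of $Lim(v)$ directly from the definition as a set of subsequential limits, combined with the uniform convergence built into condition $(*)$. First I would fix a sequence $g_i\in K$ realizing $(*)'$, so that $Lim(v)$ is the union of $K_\Lambda$-orbits of subsequential limits of the sequence $x_i := [g_i^{-1}A_i.v]\in \P(U)$. Since $\P(U)$ is compact and $K_\Lambda$ acts continuously on it, and the set of subsequential limit points of any sequence in a compact metric space is itself compact, the union of $K_\Lambda$-orbits of these limit points is a compact subset of $\P(U)$; one only needs to check that taking the $K_\Lambda$-saturation commutes with the closure operation, which follows from compactness of $K_\Lambda$ (the image of a compact set under the continuous action map $K_\Lambda\times\P(U)\to\P(U)$ is compact). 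This handles compactness.

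For connectedness, the key point is that consecutive points $x_i$ and $x_{i+1}$ become arbitrarily close as $i\to\infty$: from $(*)'$ we have $g_{i-1}^{-1}g_i\to \mathrm{Id}$ and $B_i g_i e^{-\Lambda}g_i^{-1}\to\mathrm{Id}$, hence writing $A_{i+1} = B_{i+1}A_i$ one gets $g_{i+1}^{-1}A_{i+1} = (g_{i+1}^{-1}g_i)(g_i^{-1}B_{i+1}^{-1}\cdot{}?)\dots$ — more cleanly, $g_{i+1}^{-1}A_{i+1}.v$ and $e^{\Lambda}g_i^{-1}A_i.v$ are comparable up to a factor tending to $\mathrm{Id}$, so in $\P(U)$, where $e^{\Lambda}$ acts as a scalar, we conclude $d_{\P(U)}(x_{i+1},x_i)\to 0$. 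The standard fact is then that the set of subsequential limits of a sequence whose consecutive terms converge together, inside a compact metric space, is connected: if it split into two nonempty closed sets at positive distance, one could find infinitely many indices where $x_i$ jumps from a neighborhood of one piece to the other in a single step of size bounded below, contradicting $d(x_{i+1},x_i)\to 0$. Then the $K_\Lambda$-saturation: since $K_\Lambda$ is a connected compact group — or even if not, one should argue via the identity component, but actually the cleaner route is that $Lim(v)$ is the image of $(\text{connected set of plain limits})\times K_\Lambda$ under a continuous map, which is connected provided $K_\Lambda$ is connected; if $K_\Lambda$ is not connected one instead observes that each plain limit point already lies in $Lim(v)$ and the saturation of a connected set under a group action where the set meets every relevant orbit keeps it connected — I would phrase this so as to avoid the connectedness-of-$K_\Lambda$ issue, e.g. by noting every point of $Lim(v)$ is a limit of points $k_j x_{i_j}$ with $k_j\to k$, reducing to the plain limit set.

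I expect the main obstacle to be the bookkeeping in the "$x_{i+1}$ close to $x_i$" step: one must carefully track that the gauge $g_i$ can be chosen so that $g_i^{-1}A_i.v$, after projectivizing, does not oscillate — in particular that the normalization issues (the vectors $A_i.v$ may have wildly varying norms) wash out in $\P(U)$, and that the action of $e^{\Lambda}$ on $U$, being by a single scalar $e^{d(v)}$, disappears projectively. This is exactly the place where Lemma~\ref{lem2-3} (convergence of $[A_\beta.v]$ into $g\cdot\P(U)$ along subsequences) is the needed input, and I would invoke it to guarantee that the relevant subsequential limits genuinely lie in $\P(U)$ so the metric comparison there makes sense. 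The remaining pieces — compactness of $\P(U)$, compactness of $K_\Lambda$, the elementary topological lemma that the limit set of a slowly-varying sequence is a continuum — are routine.
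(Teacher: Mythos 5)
Your argument is correct and is essentially the paper's: the authors prove connectedness by exactly the contrapositive of your ``standard fact,'' splitting $Lim(v)$ into two $K_\Lambda$-invariant pieces at distance $\epsilon$, extracting a jumping subsequence, and using $(*)'$ together with the fact that $e^{\Lambda}$ acts projectively trivially on $\P(U)$ (via Lemma~\ref{lem2-3}) to force $[g_{\alpha}^{-1}A_{\alpha}.v]$ and $[g_{\alpha+1}^{-1}A_{\alpha+1}.v]$ to share a limit, which is precisely your ``consecutive terms converge together'' step. The only cosmetic difference is that they work along a subsequence (so they never need the uniform statement $d(x_{i+1},x_i)\to 0$ near $\P(U)$), and the $K_\Lambda$-saturation issue you worry about is handled simply by noting that $K_\Lambda$, being the unitary centralizer of the Hermitian element $\Lambda$, is connected.
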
 
  
  \begin{proof}
  The compactness is clear. Suppose it is not connected, then we may write $Lim(v)=O_1\cup O_2$, with $O_1, O_2$ compact and $K_\Lambda$-invariant,  and 
  \begin{align*}
    d_{FS}(O_1, O_2)\geq\epsilon>0, 
  \end{align*}
  where $d_{FS}$ denotes the Fubini-Study metric on $\P(V)$.
   Note that $K_{\Lambda}$ is connected.  By definition we can find a subsequence $\{\alpha\}\subset\{i\}$,
   such that   
   \begin{align}
     d([g_\alpha^{-1}A_{\alpha}. v], O_1)\leq\epsilon/2, \quad d([g_{\alpha+1}^{-1}A_{\alpha+1}. v], O_1)>\epsilon/2.   \label{eqn:connect}
   \end{align}
   Passing to a subsequence we may assume $[g_\alpha^{-1} A_{\alpha}.v]$ converges to $[w]$.  By our choice of $\{\alpha\}$ we know $[w] \notin  O_2$, so $[w] \in  O_1$. 
   By $(*)'$ we know $g_{\alpha+1}^{-1}B_{\alpha+1}g_{\alpha}$ converges to $e^\Lambda$ so $[g_{\alpha+1}^{-1}A_{\alpha+1}.v]$ also converges to $[w]\in O_1$. 
   This contradicts the second inequality of (\ref{eqn:connect}). 
  \end{proof}

  Now we assume $V=E$, the standard representation of $G$. We list the elements in $\mathcal S$ in decreasing order as $\lambda_1>\lambda_2>\cdots>\lambda_r$.  Then we have an orthogonal decomposition 
 $$E=\bigoplus_{s=1}^r U_s, $$
 where $U_s$ is the eigenspace associated to the eigenvalue $\lambda_s$. We also denote $n_s=\dim U_s$. 
 The sequence $\underline{A}$ defines a filtration:
 \begin{equation} \label{eqn2-1}
 E=V_1\supset \cdots \supset V_r\supset V_{r+1}=\{0\}
 \end{equation}
where $V_s$ consists of vectors $v$ with $d(v)\leq \lambda_s$, and we make the convention that $d(0)=-\infty$.

For any $p\leq m$, recall we have the Pl\"ucker embedding of the Grassmannian $G(p; E)$ into $\P(\bigwedge^pE)$ as a closed subvariety. 
Given a $p$ dimensional subspace $W\subset E$, we choose an element $\widehat W\in \bigwedge^p E$ representing $W$. Then we apply the above discussion to $V=\bigwedge^pE$, and define $d(W):=d(\widehat W)$. This is independent of the particular choice of $\widehat W$. For simplicity of notation, we will simply denote $\widehat W$ also by $W$, and the meaning will be clear from the context. For example, when we say a sequence $[W_i]$ converges to $[W_\infty]$, we mean that the corresponding $[\widehat W_i]$ converges to $[\widehat W_\infty]$ in $\P(\bigwedge^p E)$. This is also equivalent to saying that the corresponding sequence converges in $G(p; E)$.

 \begin{prop} \label{prop2-5}
 For all $s$ we may find a subspace $W_{s}\subset E$ such that $V_s=W_{s}\bigoplus V_{s+1}$, and  $Lim( W_s)=[U_s]$. 
 \end{prop}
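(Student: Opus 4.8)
~\textbf{Setup of the induction.}
The plan is to construct the subspaces $W_s$ by downward induction on $s$, starting from $s=r$ where $V_{r+1}=\{0\}$ and $V_r=U_r$ (the lowest eigenspace), so $W_r=V_r$ works trivially. So fix $s$ and suppose $W_{s+1},\dots,W_r$ have been constructed. I need to produce a subspace $W_s$ complementary to $V_{s+1}$ inside $V_s$, with $Lim(W_s)=[U_s]$. The natural first guess is $W_s=U_s$ itself: since $d$ restricted to $U_s$ is constant equal to $\lambda_s$, and $U_s$ is $\Lambda$-invariant, one checks readily using Lemma~\ref{lem2-3} that $Lim(U_s)=[U_s]$ — indeed, for any gauge sequence $g_i$ as in $(*)'$, the normalized vectors $g_i^{-1}A_i.\widehat{U_s}$ have limit set landing in $\P(\bigwedge^{n_s}U)$ where $U$ is the $d(\widehat{U_s})$-eigenspace of $\Lambda$ acting on $\bigwedge^{n_s}E$; and because $U_s$ is exactly an eigenspace the only point of $G(n_s;E)$ representable in that one-dimensional space is $[U_s]$. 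The subtlety is whether $U_s$ is actually a complement to $V_{s+1}$ in $V_s$, i.e. whether $V_s=U_s\oplus V_{s+1}$. This need not hold, because a vector with a small $U_s$-component and larger components in lower eigenspaces can still have $d$-value $\le\lambda_s$; the filtration $V_s$ is generally \emph{not} the span of the top eigenspaces.

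\textbf{Correcting the complement.}
So instead I would take $W_s$ to be \emph{a} complement of $V_{s+1}$ in $V_s$ and argue that any such complement, or at least a carefully chosen one, has $Lim(W_s)=[U_s]$. The key point is: for $v\in V_s$, write $v=u+v'$ with $u\in U_s$ the $\lambda_s$-eigencomponent and $v'$ the sum of the other components. If $u\ne 0$, then along the flow $A_i$ the growth of $|A_i.v|$ is governed (via Lemma~\ref{lem2-2} and the structure of $(*)'$) by the largest present eigenvalue that "survives" — and I claim $d(v)=\lambda_s$ forces $u\ne 0$ and forces the limit direction of $g_i^{-1}A_i.v$ to lie in $U_s$. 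Concretely: if $v\in V_s$ has $d(v)=\lambda_s$, its $U_t$-components for $t<s$ (eigenvalue $\lambda_t<\lambda_s$) decay relative to the $U_s$-component under $e^{i\Lambda}$, while components with $t>s$ are excluded since $d(v)\le\lambda_s$ and a nonzero $U_t$-component with $\lambda_t>\lambda_s$ would push $d(v)$ up. Hence after gauging by $g_i$, the renormalized vector converges to its $U_s$-part. For $v\in V_s$ with $d(v)<\lambda_s$ (i.e. $v\in V_{s+1}$), such $v$ do not contribute to $W_s$ once we pick $W_s$ transverse to $V_{s+1}$. Therefore for a generic line in $W_s$ the limit set is a point of $\P(U_s)$, and running this over a basis and using that $Lim$ of the Plücker vector $\widehat{W_s}\in\bigwedge^{n_s}E$ is computed from the individual behaviors, one gets $Lim(W_s)\subseteq[U_s]$; the reverse inclusion is immediate since the projection $V_s\to U_s$ along $V_{s+1}$ restricted to $W_s$ is an isomorphism, so the limit direction is exactly $[U_s]$.

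\textbf{Making it precise via the wedge representation.}
More carefully, I would work directly in $V=\bigwedge^{n_s}E$ with $\widehat{W_s}$ a decomposable $n_s$-vector. By Lemma~\ref{lem2-3} applied to $\widehat{W_s}$, for any subsequence along which $B_{\alpha+1}\to ge^\Lambda g^{-1}$, a further subsequence gives $[A_\beta.\widehat{W_s}]\to[\eta]$ with $g^{-1}.\eta$ in the $d(\widehat{W_s})$-eigenspace of $\Lambda$ on $\bigwedge^{n_s}E$. So the whole problem reduces to: (a) compute $d(\widehat{W_s})=\sum_{t}(\dim(W_s\cap\text{stuff}))\lambda_t$-type quantity — show it equals $n_s\lambda_s$, which holds precisely when $W_s$ is a complement of $V_{s+1}$ in $V_s$ with the right dimension count $\dim W_s=\dim V_s-\dim V_{s+1}$; and (b) show the eigenspace of $\Lambda$ on $\bigwedge^{n_s}E$ for eigenvalue $n_s\lambda_s$ is exactly $\bigwedge^{n_s}U_s$ (one-dimensional, since $n_s=\dim U_s$ — here I must double-check that no combination of lower eigenvalues from a higher-dimensional piece reaches $n_s\lambda_s$, which is true because $\lambda_s$ is the top eigenvalue available among eigenvalues $\le\lambda_s$). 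Then $g^{-1}.\eta\in\bigwedge^{n_s}U_s$ means $[\eta]$ represents, as a point of $G(n_s;E)$, the subspace $g.U_s$; but $g\in K_\Lambda$-ambiguity and $U_s$ being $\Lambda$-invariant (hence $K_\Lambda$-invariant as a set? — one needs $K_\Lambda$ preserves $U_s$, which holds since $K_\Lambda$ commutes with $\Lambda$) forces $g.U_s=U_s$. Hence every sequential limit is $[U_s]$ and $Lim(W_s)=[U_s]$.

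\textbf{Main obstacle.}
The crux — and the step I expect to be genuinely delicate — is establishing that one can \emph{choose} $W_s$ so that $d(\widehat{W_s})=n_s\lambda_s$ exactly, equivalently that the natural map $V_s\to V_s/V_{s+1}$ has image of the "expected" dimension $n_s$ and that this quotient is modeled on $U_s$. This is really a statement that the filtration $\{V_s\}$, though defined analytically via the growth rates $d(\cdot)$, has associated graded pieces of dimensions $n_s=\dim U_s$ — i.e. the jumps of $d$ match the multiplicities of $\Lambda$. Proving $\dim(V_s/V_{s+1})=n_s$ likely requires an argument that $d(v)\in\{\lambda_1,\dots,\lambda_r\}$ with the generic value on a subspace controlled by the dimension, which one gets by combining Lemma~\ref{lem2-2} with a dimension-counting / semicontinuity argument on $[A_i.v]$ in projective space; handling the gauge $g_i$ correctly throughout (it genuinely varies with the subsequence) is where care is needed, and this is exactly why Lemmas~\ref{lem2-3}--\ref{lem2-4} were set up. Everything else is bookkeeping with the wedge representation and the eigenvalue arithmetic.
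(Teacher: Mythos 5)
Your proposal correctly locates the difficulty but does not overcome it, and it also rests on an unjustified preliminary claim. First, the assertion that $Lim(U_s)=[U_s]$ ``readily by Lemma~\ref{lem2-3}'' (and the base case $V_r=U_r$, $W_r=U_r$ of your induction) does not hold in general: Lemma~\ref{lem2-3} only places $Lim(v)$ inside the projectivized $d(v)$-eigenspace \emph{once you know} $d(v)$, and there is no a priori reason that $d(\widehat{U_s})=n_s\lambda_s$. The sequence $A_i$ is only asymptotically modelled on $ge^{i\Lambda}g^{-1}$ after a subsequence-dependent gauge, so the $\Lambda$-eigendecomposition of a fixed vector has no direct bearing on the growth of $|A_i.v|$; your heuristic that ``the $U_t$-components decay relative to the $U_s$-component'' is exactly the special case $A_i=e^{i\Lambda}$ and is what fails in general. (You also have the ordering reversed: with the paper's convention $\lambda_1>\cdots>\lambda_r$, the components with $t<s$ carry the \emph{larger} eigenvalues.) Second, and this is the real gap, your argument needs $\dim(V_s/V_{s+1})=n_s$; you flag this yourself as the ``main obstacle'' and then only gesture at ``a dimension-counting / semicontinuity argument'' without supplying one. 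Without that count an arbitrary complement of $V_{s+1}$ in $V_s$ has the wrong dimension and $d(\widehat{W_s})$ cannot equal $n_s\lambda_s$, so the reduction to the one-dimensional eigenspace $\bigwedge^{n_s}U_s$ never gets off the ground.

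The paper closes both gaps with one device you did not find: it produces test subspaces by pulling eigenspaces back along the flow at a late time. Setting $R_s=A_\alpha^{-1}g.\bigoplus_{k\leq s}U_k$ for $\alpha$ large in a subsequence where $B_{\alpha+1}\to ge^{\Lambda}g^{-1}$, the single-step growth inequality $f_{\alpha+1}(R_s)\geq f_\alpha(R_s)+\mu$ can be checked directly, and Lemma~\ref{lem2-1} propagates it forward to give $d(R_s)=\mu_s$ and hence $Lim(R_s)=[\bigoplus_{k\leq s}U_k]$ (Lemma~\ref{lem2-6}); running Lemma~\ref{lem2-1} backward on $S_\alpha=A_\alpha^{-1}g.\bigoplus_{k\geq s}U_k$ and passing to a limit gives $Q_s$ with $d(Q_s)=\nu_s$ and $Lim(Q_s)=[\bigoplus_{k\geq s}U_k]$ (Lemma~\ref{lem2-7}). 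The relations $R_s\cap V_{s+1}=0$ and $Q_s\subset V_s$ then force, by counting dimensions, $V_s=Q_s$, $\dim(V_s/V_{s+1})=n_s$, and $W_s=R_s\cap Q_s$ does the job. So the dimension identity you were missing is an \emph{output} of this construction, not an input; to salvage your induction you would need to reproduce something equivalent to Lemmas~\ref{lem2-6} and~\ref{lem2-7}.
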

 
This is a consequence of the following two lemmata. For $s=1, \cdots, r$, 
we define the following numbers for the simplicity of notations. 
\begin{align*}
p_s \triangleq \sum_{k\leq s} n_k, \quad q_s \triangleq \sum_{k\geq s} n_k, \quad \mu_s \triangleq \sum_{k\leq s} n_k \lambda_k, \quad \nu_s \triangleq \sum_{k\geq s} n_k \lambda_k. 
\end{align*}

 \begin{lem} \label{lem2-6}
 For all $s$, there is a subspace $R_s\subset E$ of dimension $p_s$, such that $ Lim(R_s)=[\bigoplus_{k\leq s} U_k]$. 
 \end{lem}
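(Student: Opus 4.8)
The plan is to build the subspace $R_s$ as a limit of the subspaces $[A_i^{-1}.\, (\bigoplus_{k\le s}U_k)]$, thinking of $R_s$ as ``the subspace which, under the flow $A_i$, is eventually driven into $\bigoplus_{k\le s}U_k$.'' First I would observe that for a $p_s$-dimensional subspace $W$, the weight $d(W)$ computed in $\bigwedge^{p_s}E$ is always $\ge \mu_s = \sum_{k\le s}n_k\lambda_k$, since $\mu_s$ is the smallest eigenvalue of the induced $\Lambda$-action on $\bigwedge^{p_s}E$ restricted to the decomposable vectors — indeed the eigenvalues of $\Lambda$ on $\bigwedge^{p_s}E$ are sums of $p_s$ eigenvalues of $\Lambda$ (with multiplicity), and $\mu_s$ is the minimal such sum, achieved only by the line $\bigwedge^{p_s}(\bigoplus_{k\le s}U_k)$. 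Hence the eigenspace $U$ attached to the value $\mu_s$ in $\bigwedge^{p_s}E$ is exactly the line $[\,\bigoplus_{k\le s}U_k\,]$ (viewed in the Grassmannian via Plücker).

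Next, to actually produce a $W$ realizing $d(W)=\mu_s$, I would run a minimization/diagonal argument. Consider a sequence of $p_s$-dimensional subspaces $W^{(j)}$ with $d(W^{(j)})$ decreasing to $\inf_W d(W)$; by Lemma \ref{lem2-2} the values $d(W)$ lie in the finite set $\mathcal S(\bigwedge^{p_s}E)$, so this infimum is attained, say by $W=R_s$, and one must check $d(R_s)=\mu_s$ rather than something larger. For this I would exhibit an explicit candidate: take $W^{(j)}=[A_j^{-1}.(\bigoplus_{k\le s}U_k)]$. Since $B_{i+1}=A_{i}A_{i-1}^{-1}$ is uniformly close (along subsequences, via $(*)$) to a conjugate $ge^{\Lambda}g^{-1}$, one estimates $f_{i+1}(W^{(j)}) - f_i(W^{(j)})$ and finds it converges to $\mu_s$, the minimal weight; more cleanly, using $(*)'$ with the gauge sequence $g_i$, the subspace $[g_i^{-1}A_i.\,R_s]$ stays within bounded Fubini–Study distance of $[\bigoplus_{k\le s}U_k]$ and its successive log-norm increments converge to $\mu_s$. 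Once $d(R_s)=\mu_s$ is established, Lemma \ref{lem2-3} applies: every subsequential limit of $[g_i^{-1}A_i.\,R_s]$ lies in $\P(U)=\{[\bigoplus_{k\le s}U_k]\}$, which is a single point, so $Lim(R_s)=[\bigoplus_{k\le s}U_k]$ with no remaining ambiguity (the $K_\Lambda$-orbit of that point is itself, as $K_\Lambda$ preserves each $U_k$).

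The step I expect to be the main obstacle is showing the infimum of $d(W)$ over $p_s$-dimensional subspaces is exactly $\mu_s$ and is attained by something whose limit set is the \emph{full} point $[\bigoplus_{k\le s}U_k]$ — i.e. ruling out that the minimizing sequence degenerates so that $A_i^{-1}$ fails to straighten it out, or that one only gets $d(W)>\mu_s$ for every subspace (which would happen if the $A_i$ were genuinely ``generic'' rather than asymptotically geodesic). This is precisely where condition $(*)$ is essential: it forces $B_{i+1}$ and $B_{i+2}$ to have the \emph{same} limiting conjugacy class $ge^{\Lambda}g^{-1}$, so the argument in Lemma \ref{lem2-1}/\ref{lem2-3} — that equality in the convexity Lemma \ref{lem2-0} forces an eigenvector — pins the limit down to the eigenspace $U$. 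Without $(*)$ one could not conclude the limit subspace is $\Lambda$-invariant. I would therefore organize the proof so that the convexity equality case of Lemma \ref{lem2-0}, applied in $\bigwedge^{p_s}E$, does the decisive work, with the explicit candidate $[A_i^{-1}.(\bigoplus_{k\le s}U_k)]$ serving only to certify that the minimal value $\mu_s$ is actually achieved.
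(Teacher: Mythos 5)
Your overall skeleton is the right one and matches the paper's: push the model subspace $\bigoplus_{k\le s}U_k$ back by the flow to get an explicit candidate, show its weight equals $\mu_s$, and then use the fact that the $\mu_s$-eigenspace of $\Lambda$ on $\bigwedge^{p_s}E$ is the single line $\bigwedge^{p_s}\bigl(\bigoplus_{k\le s}U_k\bigr)$ together with Lemma \ref{lem2-3} to pin down $Lim(R_s)$. But there are concrete errors in the execution. First, the inequality at the start is inverted: since the $\lambda_k$ are listed in \emph{decreasing} order, $\mu_s=\sum_{k\le s}n_k\lambda_k$ is the sum of the $p_s$ \emph{largest} eigenvalues, i.e.\ the \emph{maximal} eigenvalue of $\Lambda$ on $\bigwedge^{p_s}E$, so for every $p_s$-dimensional $W$ one has $d(W)\le\mu_s$, not $d(W)\ge\mu_s$. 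Consequently the whole ``minimize $d(W)$'' framing is backwards (and in any case vacuous once $d$ takes values in a finite set); what has to be produced is a subspace attaining the \emph{top} value, and this cannot come from a soft compactness argument --- it needs the explicit candidate and the propagation mechanism of Lemma \ref{lem2-1}.

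Second, your candidate $W^{(j)}=A_j^{-1}.\bigl(\bigoplus_{k\le s}U_k\bigr)$ is missing the gauge element: the paper takes $R_s=A_\alpha^{-1}g.\bigoplus_{k\le s}U_k$, where $g$ is the unitary element from condition $(*)$ along the chosen subsequence. This matters: with the $g$ inserted, $A_{\alpha+1}.R_s=B_{\alpha+1}g.\widehat U\approx ge^{\Lambda}.\widehat U$, whose norm is $e^{\mu_s}|\widehat U|$, giving the one-step estimate $f_{\alpha+1}(R_s)\ge f_{\alpha}(R_s)+\mu$; without it one gets $ge^{\Lambda}g^{-1}.\widehat U$, and $|e^{\Lambda}.(g^{-1}\widehat U)|$ can lie anywhere between the extreme eigenvalues, so the increment estimate fails. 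Third, the decisive step is asserted rather than proved: you claim the successive increments of $f_i(R_s)$ ``converge to $\mu_s$'' and that $[g_i^{-1}A_i.R_s]$ ``stays within bounded distance'' of $[\bigoplus_{k\le s}U_k]$, but that is essentially the conclusion. The actual mechanism is to fix $\mu$ in the spectral gap $(\mu_s+\lambda_{s+1}-\lambda_s,\ \mu_s)$, verify the single inequality $f_{\alpha+1}(R_s)\ge f_\alpha(R_s)+\mu$ at one late time $\alpha>I(\mu)$, and then invoke Lemma \ref{lem2-1} to propagate it to all later times, forcing $d(R_s)>\mu$ and hence $d(R_s)=\mu_s$ since $\mu_s$ is the only eigenvalue above $\mu$. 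Only after that does Lemma \ref{lem2-3} apply. You gesture at Lemma \ref{lem2-1} in your closing paragraph, so the missing piece is recoverable, but as written the proof does not go through.
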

 
 \begin{proof}
Fix a subsequence $\{\alpha\}$ such that $B_{\alpha+1}$ converges uniformly to $ge^{\Lambda }g^{-1}$ for some $g\in K$. Fix any number $\mu\in (\mu_s+\lambda_{s+1}-\lambda_s, \mu_s)$. Notice for any $s$, we have by definition $\log |e^{\Lambda}. U_s|=n_s\lambda_s \log |U_s|$. If we let $R_s=A_\alpha^{-1}g. \bigoplus_{k\leq s} U_k$ for $\alpha$ large, then $f_{\alpha+1}(R_s)\geq f_{\alpha}(R_s)+\mu$. So by Lemma \ref{lem2-1} if we choose $\alpha>I(\mu)$, then indeed we obtain $d(R_s)>\mu$. It is easy to see that $\bigwedge^{p_s}(\bigoplus_{k\leq s} U_k)\subset \bigwedge^{p_s}\C^m$ is the unique one dimensional eigenspace of $\Lambda$ with eigenvalue bigger than  $\mu$.  
By  Lemma \ref{lem2-3}, we know $d(R_s)=\mu_s$ and $Lim (R_s)=[\bigoplus_{k\leq s} U_k]$.
 \end{proof}
 
\begin{lem} \label{lem2-7}
  For all $s$, there is a subspace $Q_s\subset E$ of dimension $q_s$, such that $Lim(Q_s)=[\bigoplus_{k\geq s} U_k]$. 
\end{lem}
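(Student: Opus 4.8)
The plan is to prove Lemma \ref{lem2-7} by mimicking the strategy of Lemma \ref{lem2-6}, but now building the subspace from the \emph{bottom} of the filtration rather than the top. In Lemma \ref{lem2-6} we selected a subspace whose Kempf--Ness slope was as \emph{large} as possible given its dimension, forcing it to converge to the span of the top eigenspaces. Here the dual observation is that $\bigwedge^{q_s}\bigl(\bigoplus_{k\geq s}U_k\bigr)$ is the unique one-dimensional eigenspace of $\Lambda$ acting on $\bigwedge^{q_s}E$ with eigenvalue \emph{smaller} than a suitable threshold. So the aim is to produce $Q_s$ with $d(Q_s)$ strictly below that threshold, and then invoke Lemma \ref{lem2-3} to conclude $Lim(Q_s)=[\bigoplus_{k\geq s}U_k]$ and (as a byproduct) $d(Q_s)=\nu_s$.

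Concretely, I would fix a subsequence $\{\alpha\}$ along which $B_{\alpha+1}$ converges uniformly to $ge^{\Lambda}g^{-1}$ for some $g\in K$. Choose a number $\mu$ in the open interval $(\nu_s, \nu_s+\lambda_{s-1}-\lambda_s)$ when $s\geq 2$, and $\mu$ just larger than $\nu_1$ when $s=1$ (i.e. any $\mu>\nu_1$ works since there is nothing above). Then set $Q_s=A_\alpha^{-1}g.\bigoplus_{k\geq s}U_k$ for $\alpha$ large. Using $\log|e^{\Lambda}.U_k|=n_k\lambda_k\log|U_k|$ on each factor, one computes $\log\bigl|e^{\Lambda}.\widehat{\bigoplus_{k\geq s}U_k}\bigr| = \nu_s\,\log\bigl|\widehat{\bigoplus_{k\geq s}U_k}\bigr|$, so that $f_{\alpha+1}(Q_s)\leq f_\alpha(Q_s)+\mu$. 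The contrapositive direction of Lemma \ref{lem2-1} — applied to $-\mu\notin \mathcal S\bigl(\bigwedge^{q_s}E\bigr)$, or equivalently by running the same convexity argument in the other direction — then forces $d(Q_s)<\mu$ once $\alpha$ is chosen past the relevant index. Since $\bigwedge^{q_s}\bigl(\bigoplus_{k\geq s}U_k\bigr)$ is the unique one-dimensional $\Lambda$-eigenspace in $\bigwedge^{q_s}E$ of eigenvalue $<\mu$ (the next possible eigenvalue being $\nu_s+\lambda_{s-1}-\lambda_s\geq\mu$), Lemma \ref{lem2-3} pins down $d(Q_s)=\nu_s$ and $Lim(Q_s)=[\bigoplus_{k\geq s}U_k]$, which is exactly the assertion.

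One technical subtlety worth handling carefully: Lemma \ref{lem2-1} as stated gives only the ``upward'' monotonicity (if a jump exceeds $\mu$ once, it does so forever), so to deduce $d(Q_s)<\mu$ from $f_{\alpha+1}(Q_s)\leq f_\alpha(Q_s)+\mu$ I need the mirror statement: if $f_{i+1}(v)\leq f_i(v)+\mu$ for some large $i$, then this persists for all larger $j$. This follows by the identical proof — the convexity inequality of Lemma \ref{lem2-0} is symmetric, and the limiting argument produces an eigenvector of eigenvalue $\mu$ at a value $\mu\notin\mathcal S(V)$, a contradiction — or alternatively by applying Lemma \ref{lem2-1} to the sequence $\underline{A}$ paired with $-\mu$ after noting $-\mu\notin\mathcal S(\bigwedge^{q_s}E)$ isn't quite the right reduction and it is cleaner just to restate and reuse the argument. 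I expect this ``downward Lemma \ref{lem2-1}'' to be the only real point requiring care; everything else is a direct transcription of the proof of Lemma \ref{lem2-6} with inequalities reversed and the boundary case $s=1$ (resp. the absence of an eigenvalue above $\lambda_1$) handled trivially. Finally, Lemmata \ref{lem2-6} and \ref{lem2-7} together give Proposition \ref{prop2-5}: intersecting $R_s$ with $Q_{s+1}$ appropriately, or rather using $R_{s-1}^{\perp}\cap R_s$-type constructions combined with $Q_s$, one extracts the complement $W_s$ with $V_s=W_s\oplus V_{s+1}$ and $Lim(W_s)=[U_s]$.
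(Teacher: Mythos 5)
There is a genuine gap, and it sits exactly at the point you flagged as a ``technical subtlety'': the ``downward Lemma \ref{lem2-1}'' you invoke (if $f_{i+1}(v)\leq f_i(v)+\mu$ for some large $i$, then the same holds for all larger $j$) is false, and the proof of Lemma \ref{lem2-1} does not adapt to it. Convexity of $t\mapsto\log|e^{t\Lambda}.v|$ makes the difference quotients \emph{increasing}, so Lemma \ref{lem2-0} rules out the pattern ``increment $\geq\mu$ followed by increment $\leq\mu$'' but says nothing against ``increment $\leq\mu$ followed by increment $\geq\mu$'': in the mirrored limiting argument the two inequalities you would obtain are $\log|e^{\Lambda}.w|-\log|w|\leq\mu\leq\log|e^{2\Lambda}.w|-\log|e^{\Lambda}.w|$, which is perfectly consistent with Lemma \ref{lem2-0} and yields no contradiction. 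A concrete counterexample: take $A_i=e^{i\Lambda}$ with $\Lambda=\mathrm{diag}(1,0)$ on $E=\C^2$, $v=(\epsilon,1)$ and $\mu=1/2$; the increments start near $0$ and increase to $1$, crossing $1/2$ at a time that can be pushed past any prescribed $I$ by shrinking $\epsilon$. What Lemma \ref{lem2-1} genuinely gives (its contrapositive) is \emph{backward} propagation of $\leq\mu$: from $f_{\alpha+1}\leq f_\alpha+\mu$ you control the increments only on $[I(\mu),\alpha)$, not on $[\alpha,\infty)$, so for a \emph{fixed} large $\alpha$ the subspace $A_\alpha^{-1}g.\bigoplus_{k\geq s}U_k$ need not satisfy $d\leq\mu$, and your conclusion $d(Q_s)<\mu$ does not follow.

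The missing idea --- and this is what the paper's proof does --- is to let the subspace move with $\alpha$: set $S_\alpha=A_\alpha^{-1}g.\bigoplus_{k\geq s}U_k$, use the backward propagation to get $f_{i+1}(S_\alpha)\leq f_i(S_\alpha)+\mu$ for all $i\in[I(\mu),\alpha)$, and then define $Q_s$ as a subsequential \emph{limit} of $[S_\alpha]$ in the Grassmannian as $\alpha\to\infty$. For each fixed $i$ the inequality passes to the limit, so $Q_s$ satisfies $f_{i+1}(Q_s)\leq f_i(Q_s)+\mu$ for all $i\geq I(\mu)$, whence $d(Q_s)\leq\mu$ by (\ref{eqn:dv}) and (\ref{eqn:ddv}). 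The remainder of your argument --- that $\nu_s$ is the unique eigenvalue of $\Lambda$ on $\bigwedge^{q_s}E$ below $\mu$, with one-dimensional eigenspace $\bigwedge^{q_s}(\bigoplus_{k\geq s}U_k)$, followed by Lemma \ref{lem2-3} --- is correct and is exactly how the paper concludes. So the passage from Lemma \ref{lem2-6} to Lemma \ref{lem2-7} is not merely a reversal of inequalities: the asymmetry of Lemma \ref{lem2-1} forces this extra limiting step in the construction of $Q_s$.
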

 
\begin{proof}
We use the same subsequence $\{\alpha\}$ as in the proof of previous lemma, and let $\mu\in (\nu_s, \nu_s+\lambda_{s-1}-\lambda_s)$. We define $S_\alpha=A_\alpha^{-1}g. \bigoplus_{k\geq s}U_k$,
then for $\alpha$ large we have $f_{\alpha+1}(S_\alpha)\leq f_{\alpha}(S_\alpha)+\mu$. Now apply Lemma \ref{lem2-1} reversely, we see that for all  $i\in [I(\mu), \alpha)$, we have
\begin{align}
 f_{i+1}(S_\alpha)\leq f_{i}(S_\alpha)+\mu.   \label{eqn:dddv}
\end{align}
Let $\alpha$ tend to infinity we may pass to a subsequence and assume $[S_\alpha]$ converges to a limit, which we denote by $[Q_s]$. 
Combining (\ref{eqn:dv}), (\ref{eqn:ddv}) and (\ref{eqn:dddv}), we obtain $d(Q_s)\leq \mu$. Hence $d(Q_s)=\nu_s$, by an argument similar to the proof of Lemma~\ref{lem2-6}.
Then we apply Lemma~\ref{lem2-3} to obtain $Lim(Q_s)=[\bigoplus_{k\geq s}U_k]$.
\end{proof}
   
\begin{proof}[Proof of Proposition \ref{prop2-5}]
It follows easily from Lemma \ref{lem2-6} and Lemma \ref{lem2-7} that for all $s$, $R_s\cap V_{s+1}=0$, and $Q_s\subset V_s$, by their definitions.
So we have 
\begin{align*}
q_s=\dim Q_s\leq \dim V_s\leq \sum_{s} n_s\lambda_s-\dim R_{s-1}=\sum_{s}n_s\lambda_s-\mu_{s-1}=q_{s}.
\end{align*}
Therefore, all the inequalities in the above line become equalities. In particular, we have $V_s=Q_s$. 
Define $W_s=R_s\cap Q_s$. 
Then we have
\begin{align*}
  Lim(W_s)=Lim(R_s)\cap Lim(Q_s)=[U_s]. 
\end{align*}
It follows from definitions and the above equalities that $W_s\subset V_s$, $W_s\cap V_{s+1}=0$.   Moreover,  the above equalities imply that $\dim W_s =n_s$ and consequently
\begin{align*}
  \dim W_s+\dim V_{s+1}=\dim V_s. 
\end{align*}
This yields that $V_s=W_s\bigoplus V_{s+1}$. 
\end{proof}
 
Now fix a choice of $W_s$  in Proposition \ref{prop2-5}.  Then we can define a (real) one-parameter subgroup $\lambda(t)=\exp(t\xi)$ of $G$, where $\xi$ acts on $W_s$ by multiplication by $\lambda_s$. 
Since $Lim(W_s)=[U_s]$, we may find a sequence $C_i\in G$ such that $C_i$ converges uniformly to the identity, and $C_i$ identifies $g_i^{-1}A_i. W_{s}$ with $U_s$ for all $s$. Let $\widetilde A_i=C_ig_i^{-1}A_i C_0^{-1}$, and $\widetilde B_i=\widetilde A_i {\widetilde A_{i-1}}^{-1}$.  From the construction, $\widetilde A_i, \widetilde B_i\in G_\Lambda$, where $G_\Lambda=\{g\in G|g\Lambda g^{-1}=\Lambda\}$, and by our choice of $g_i$ we have  
\begin{equation} \label{eqn2-2}
\lim_{i\rightarrow\infty} \widetilde B_i=e^{\Lambda}. 
\end{equation}
Moreover,  $\lambda(t)=C_0^{-1}e^{t\Lambda} C_0$. 
 
\

Now we return to a general representation $V$. Let $V=\bigoplus_{j=1}^e \mathcal U_j$, where $\mathcal U_j$ is the eigenspace of $\Lambda $ associated to the eigenvalue $\tau_j$, and $\tau_j$ is arranged in a decreasing order. We also have the filtration 

$$V=\mathcal V_1\supset \cdots \supset \mathcal V_e\supset \mathcal V_{e+1}=\{0\}, $$
where $\mathcal V_j$ consists of elements $v$ with $d(v)\leq \tau_j$. 
Given $[v]\in \P(V)$, we denote $\displaystyle [\bar v]=\lim_{t\rightarrow\infty} C_0 \lambda(t). [v]\in \P(V)$. Then $[\bar v]$ is fixed by $\Lambda$.   

\begin{prop} \label{prop2-8}
Any point $[w]\in Lim(v)$ is in the closure of  the $G_\Lambda$-orbit of $[\bar v]$. 
\end{prop}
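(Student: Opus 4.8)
The plan is to reduce the statement to a statement about limits along subsequences and then track how the filtration data interacts with the rescaling group $\lambda(t) = \exp(t\xi)$. The key objects are the modified sequences $\widetilde A_i = C_i g_i^{-1} A_i C_0^{-1}$ constructed right before the proposition, which live in $G_\Lambda$ and satisfy $\widetilde B_i \to e^\Lambda$ by \eqref{eqn2-2}. Since $[w] \in Lim(v)$ means $[g_i^{-1} A_i . v] \to [w']$ for some $w'$ in the $K_\Lambda$-orbit of $[w]$ along a subsequence, and $C_i \to \mathrm{Id}$, we also have $[\widetilde A_i C_0 . v] = [C_i g_i^{-1} A_i . v] \to [w']$. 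So it suffices to show $[w']$, hence $[w]$, lies in the closure of the $G_\Lambda$-orbit of $[\bar v]$.

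First I would fix the subsequence $\{\alpha\}$ realizing the limit $[w']$ and set $u = C_0 . v$, so that $[\widetilde A_\alpha . u] \to [w']$ with $\widetilde A_\alpha \in G_\Lambda$. The point is that $\widetilde A_\alpha$ is ``almost'' a power of $e^\Lambda$ in the following sense: writing $\widetilde A_\alpha = \widetilde B_\alpha \widetilde B_{\alpha-1} \cdots \widetilde B_1$ with each $\widetilde B_i \to e^\Lambda$, one can compare $\widetilde A_\alpha$ with $e^{\alpha \Lambda}$ modulo a correction. More precisely, I would argue that the point $[\bar u] = \lim_{t\to\infty} e^{t\Lambda}.[u]$ (note $C_0 \lambda(t) C_0^{-1} = e^{t\Lambda}$, so $[\bar v] = C_0^{-1}.[\bar u]$ in the obvious sense, and one should check the normalization against the definition $[\bar v] = \lim C_0\lambda(t).[v]$) is the projection of $[u]$ onto the top eigenspace stratum of $\Lambda$ acting on $V$ that meets $u$. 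Decompose $u = \sum_j u_j$ along $V = \bigoplus \mathcal U_j$; then $d(u) = \tau_{j_0}$ where $j_0$ is the smallest index with $u_{j_0} \neq 0$ — this uses Lemma \ref{lem2-2} together with the fact that $\widetilde B_i \to e^\Lambda$ forces $f_i(u) = \log|\widetilde A_i . u|$ to grow at rate $\tau_{j_0}$. And $[\bar u] = [u_{j_0}]$.

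The main step is then: $[\widetilde A_\alpha . u] \to [w']$ together with $\widetilde A_\alpha \in G_\Lambda$ and the growth estimate force $[w']$ to lie in the closure of $G_\Lambda . [u_{j_0}] = G_\Lambda . [\bar u]$. For this I would write $e^{-\alpha\tau_{j_0}} \widetilde A_\alpha . u = e^{-\alpha\tau_{j_0}}\widetilde A_\alpha . u_{j_0} + e^{-\alpha\tau_{j_0}} \widetilde A_\alpha . (u - u_{j_0})$; the second term goes to zero in projective terms because $\widetilde A_\alpha$ preserves each $\mathcal U_j$ and the $\widetilde B_i \to e^\Lambda$ estimate gives $|\widetilde A_\alpha . u_j| = O(e^{\alpha\tau_j + o(\alpha)})$ with $\tau_j < \tau_{j_0}$ for $j > j_0$ — so after rescaling only the $\mathcal U_{j_0}$-component survives, and $e^{-\alpha\tau_{j_0}}\widetilde A_\alpha|_{\mathcal U_{j_0}}$ is a bounded sequence in $G_\Lambda$ (in fact it stays in a compact-up-to-center piece once we projectivize), whose limit points are in $\overline{G_\Lambda}$ acting on $[u_{j_0}] = [\bar u]$. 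Translating back by $C_0$ (which conjugates $e^\Lambda$ to $\lambda(1)$ and hence $G_\Lambda$ to a conjugate — I need to be careful that the proposition's $G_\Lambda$ is the genuine centralizer of $\Lambda$, and that $C_0$-conjugation matches the $\lambda(t)$ in the definition of $[\bar v]$) gives the conclusion.

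\textbf{The hard part} will be the bookkeeping around the two ``gauge'' adjustments — the $g_i \in K$ from condition $(*)'$ and the $C_i \to \mathrm{Id}$ from Proposition \ref{prop2-5} — and making sure the rescaled products $e^{-\alpha\tau_{j_0}}\widetilde A_\alpha$ genuinely converge (after passing to a further subsequence) rather than wandering off: the subtlety is that $\widetilde B_i \to e^\Lambda$ only gives $\widetilde B_i = e^\Lambda(\mathrm{Id} + \epsilon_i)$ with $\epsilon_i \to 0$ but $\sum \|\epsilon_i\|$ need not converge, so the product $e^{-\alpha\Lambda}\widetilde A_\alpha$ need not converge — however, since we only care about the limit in $\P(V)$ and we have already extracted the subsequence $\{\alpha\}$ along which $[\widetilde A_\alpha C_0 . v] = [\widetilde A_\alpha . u]$ converges, this convergence is automatic and I just need to identify the limit, which the eigenspace-decomposition argument above does. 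I expect the cleanest writeup defines $[\bar v]$ first, relates it to the top non-vanishing eigencomponent, and then derives the orbit-closure statement from the already-established convergence of $[A_i.v]$ modulo $K$.
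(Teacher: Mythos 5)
Your argument is correct and is essentially the paper's proof: identify $[\bar v]$ with the dominant $\Lambda$-eigencomponent of $C_0. v$ (its projection to $\P(\mathcal U_j)$), note that $\widetilde A_\alpha\in G_\Lambda$ preserves the eigenspace decomposition so that $\widetilde A_\alpha. [\bar v]$ is exactly the $\mathcal U_j$-projection of $[\widetilde A_\alpha C_0. v]\rightarrow [w]$, and conclude by continuity of that projection near $[w]\in\P(\mathcal U_j)$. The only bookkeeping to settle is that the paper's $[\bar v]=\lim_{t\rightarrow\infty} C_0\lambda(t). [v]$ already equals your $[\bar u]=\lim_{t\rightarrow\infty} e^{t\Lambda}. [C_0. v]$ (since $C_0\lambda(t)=e^{t\Lambda}C_0$), so no further $C_0$-translation is needed, and the explicit rescaling by $e^{-\alpha\tau_{j_0}}$ (and any worry about convergence of the rescaled operators) is unnecessary once you observe that each $\widetilde A_\alpha. [\bar v]$ already lies in the $G_\Lambda$-orbit of $[\bar v]$.
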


\begin{proof}
Suppose for some subsequence $\{\alpha\}$,  $g_\alpha^{-1}A_\alpha. [v]$ converges to $[w]$. Then 
\begin{align*}
  [v_\alpha] \triangleq [\widetilde A_\alpha C_0. v]=[C_\alpha g_\alpha^{-1} A_\alpha. v]
\end{align*}
also converges to $[w]$. 
Suppose $[w]\in \P(\mathcal U_j)$, then $C_0.v \in \bigoplus_{k\geq j} \mathcal U_k$ by (\ref{eqn2-2}). 
Therefore $[\bar v]$ is the projection of $[C_0.v]$ to $\P(\mathcal U_j)$. Since $\widetilde A_\alpha\in G_\Lambda$, $\widetilde A_\alpha. [\bar v]$ is the projection of $[v_\alpha]$ to $\P(\mathcal U_j)$. Since the projection map to $\P(\mathcal U_j)$ is continuous in a neighborhood of $[w]$, it follows that 
$\displaystyle \lim_{\alpha\rightarrow\infty}\widetilde A_\alpha. [\bar v]=[w]$. 
\end{proof}

In general the above $\lambda(t)$ depends on the choice of $W_s$. Let $P(\xi)$ be the parabolic subgroup of $G$ consisting of elements $p$ such that 
$\displaystyle \lim_{t\rightarrow\infty} \lambda(t)p\lambda(t)^{-1}$ exists. If we are given another choice of complementary subspaces $W_s'$, then we have $\xi'=p\xi p^{-1}$ for some $p\in P(\xi)$. In particular,  the conjugacy class of $\xi$ under the action of  $P(\xi)$ is  independent of the particular choice of $W_s$, so is uniquely determined by the filtration (\ref{eqn2-1}). This equivalence relation is well-studied in geometric invariant theory, see for example \cite{GRS}, Appendix C. We also have

\begin{prop} \label{prop2-9}
The $G_\Lambda$-orbit of $[\bar v]$ is uniquely determined by $\underline{A}$ and $[v]$. 
\end{prop}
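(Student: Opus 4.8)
The plan is to show that the only freedom in the construction of $[\bar v]$ is the choice of complementary subspaces $W_s$ (equivalently the choice of $\xi$ within its $P(\xi)$-conjugacy class), and that this freedom is exactly absorbed by the $G_\Lambda$-orbit. First I would record that the data entering the definition of $[\bar v]$ are: the sequence $\underline{A}$ (which, together with $[v]$, determines the filtrations \eqref{eqn2-1} and $\mathcal V_\bullet$ canonically via the weight function $d$), the gauge sequence $g_i$ from $(*)'$, the choice of $W_s$ in Proposition \ref{prop2-5}, and then the auxiliary $C_i$ and $C_0$. The claim is that, modulo the $G_\Lambda$-action, the outcome depends on none of these auxiliary choices.

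The key steps, in order. (1) Fix two choices of complementary subspaces $W_s$ and $W_s'$, giving one-parameter subgroups $\lambda(t)=\exp(t\xi)$ and $\lambda'(t)=\exp(t\xi')$ with $\xi'=p\xi p^{-1}$ for some $p\in P(\xi)$, as already noted in the paragraph before the proposition; correspondingly $\lambda'(t)=C_0'^{-1}e^{t\Lambda}C_0'$ and the two gauge-straightening sequences $C_i$, $C_i'$ both converge to the identity. (2) Compare the two limits $[\bar v]=\lim_{t\to\infty} C_0\lambda(t).[v]$ and $[\bar v]'=\lim_{t\to\infty}C_0'\lambda'(t).[v]$: writing $C_0'\lambda'(t) = (C_0' C_0^{-1})\,C_0\,(C_0^{-1}\lambda'(t)C_0)\cdot(\ldots)$ and using $\lambda'(t)=C_0^{-1}(C_0 p C_0^{-1})e^{t\Lambda}(C_0 p^{-1}C_0^{-1})C_0$, one identifies the relation between the two limiting projections as multiplication by a fixed element of $G_\Lambda$ (the ``$\lim_{t\to\infty}\lambda(t)p\lambda(t)^{-1}$''-type limit lands in the centralizer of $e^\Lambda$, hence of $\Lambda$). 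This handles the dependence on $W_s$. (3) Next treat the dependence on the gauge sequence $g_i$: by the remark following $(*)'$, any two valid choices $g_i$ and $g_i'$ differ by $g_i' = g_i u_i$ with $u_i\in K_\Lambda$ converging (subsequentially) to some $u_\infty\in K_\Lambda\subset G_\Lambda$, which again only translates $[\bar v]$ by an element of $G_\Lambda$. (4) Finally, the choice of $K$-invariant metric on $V$ is irrelevant by the remark already made in the text (``the results are independent of the particular choice of the metric''), and the $C_i\to\mathrm{Id}$ are determined up to elements fixing each $U_s$, i.e. up to $G_\Lambda$, so they too contribute only a $G_\Lambda$-ambiguity. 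Assembling (1)–(4) shows that changing any auxiliary datum moves $[\bar v]$ within a single $G_\Lambda$-orbit, which is the assertion.

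I expect the main obstacle to be step (2): carefully tracking how the parabolic degeneration $p\in P(\xi)$ interacts with the two different conjugations of $e^{t\Lambda}$ as $t\to\infty$, and verifying that the resulting discrepancy is a genuine element of $G_\Lambda$ rather than merely an element of the parabolic $P(\xi)$. The point is that $\lim_{t\to\infty}\lambda(t)p\lambda(t)^{-1}$ exists and commutes with $\xi$ (hence, after conjugating by $C_0$, the corresponding limit commutes with $\Lambda$), so the limit of $C_0\lambda(t).[v]$ computed with $\xi'$ differs from the one computed with $\xi$ precisely by that centralizing element — but making this rigorous requires checking that the limit is not disturbed by the ``lower-order'' parabolic part, which is where one uses that $[\bar v]$ is the projection of $[C_0.v]$ to the relevant eigenspace $\P(\mathcal U_j)$ exactly as in the proof of Proposition \ref{prop2-8}. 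A clean way to organize this is: prove directly that for \emph{any} $p\in P(\xi)$ one has $\lim_{t\to\infty}\lambda(t)p\lambda'(t)^{-1}.[\bar v]$ exists and lies in $G_\Lambda.[\bar v]$, where $\lambda'(t)=\lambda(t)\cdot(\lambda(t)^{-1}p\lambda(t))$; this isolates the computation and makes the $G_\Lambda$-equivariance transparent.
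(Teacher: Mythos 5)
Your proposal is correct and follows essentially the same route as the paper, which disposes of the statement in one line by observing that the choice of $C_0$ identifying the $W_s$ with the $U_s$ is unique up to $G_\Lambda$; your step (2) is just the careful expansion of that remark, namely that changing the complements $W_s$ multiplies $C_0$ on the left by an element of the parabolic preserving the filtration $\bigoplus_{k\geq s}U_k$, whose conjugates $e^{t\Lambda}qe^{-t\Lambda}$ converge to the (invertible) Levi part in $G_\Lambda$, so the limit $[\bar v]=\lim_{t\to\infty}e^{t\Lambda}.[C_0 v]$ only moves within its $G_\Lambda$-orbit.
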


\begin{proof}
This is not hard to see. The point is that  the choice of $C_0$ identifying  $W_s$ with $U_s$ for all $s$, is unique up to the action of $G_\Lambda$. 
\end{proof}

Now we introduce the following property on $[v]$: 

\

\noindent  \textbf{Property (R)}: Every element $[w]\in Lim(v)$ has a reductive stabilizer group in $G_\Lambda$, and $G_\Lambda. [w]=K_\Lambda. [w]$.

\

\noindent This is often satisfied in the concrete geometric situation, as we shall see later. 

\begin{thm} \label{thm2-12} 
Suppose $v$ satisfies Property (\textbf{R}), then $Lim(v)=K_\Lambda. [v_\infty]$ for a unique element $[v_\infty]\in \P(V)$. Moreover, there is
 an algebraic one-parameter subgroup $\phi: \C^*\rightarrow G_\Lambda$ that degenerates $[\bar v]$ to $[v_\infty]$, i.e. 
 $\displaystyle \lim_{t\rightarrow0} \phi(t). [\bar v]=[v_\infty]$. 
\end{thm}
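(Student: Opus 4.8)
The plan is to analyze $Lim(v)$ entirely inside the finite-dimensional reductive group $G_\Lambda$ acting on the fixed-point locus $\P(V)^\Lambda = \bigsqcup_j \P(\mathcal U_j)$, using the sequence $\widetilde A_i \in G_\Lambda$ from \eqref{eqn2-2} which satisfies $\widetilde B_i \to e^\Lambda$. First I would recall from Proposition \ref{prop2-8} that $Lim(v) \subset \overline{G_\Lambda . [\bar v]}$, and that by Proposition \ref{prop2-9} the orbit $G_\Lambda . [\bar v]$ is canonically attached to $\underline A$ and $[v]$; also note every $[w] \in Lim(v)$ actually lies in $\P(\mathcal U_j)$ for the single $j$ with $\tau_j = d(v)$ restricted appropriately — so $d$ is constant on $Lim(v)$ and in fact $Lim(v)$ lives in one eigenspace $\P(\mathcal U_j)$. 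Then the heart of the argument: since $[\bar v]$ has a reductive stabilizer in $G_\Lambda$ (Property (R) applied to a point of $Lim(v)$ in the closure of its orbit, combined with the Hilbert--Mumford/Kempf--Ness picture), the orbit $G_\Lambda.[\bar v]$ is an affine variety, its boundary $\overline{G_\Lambda.[\bar v]} \setminus G_\Lambda.[\bar v]$ consists of orbits of strictly smaller dimension, and the unique closed orbit in the closure is obtained by a one-parameter degeneration. I would identify $[v_\infty]$ as (a point on) this unique closed orbit.

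The key step is to show $Lim(v)$ is precisely one $K_\Lambda$-orbit, namely this closed orbit. Here I would use Lemma \ref{lem2-4}: $Lim(v)$ is compact, connected, and $K_\Lambda$-invariant. Compactness of $Lim(v) \subset \overline{G_\Lambda.[\bar v]}$ forces it to meet the closed orbit (a compact orbit inside the closure — indeed the only compact $G_\Lambda$-orbit, since a compact orbit is closed and an affine homogeneous space is compact only if it is a point modulo a reductive... more precisely the unique closed orbit is compact because its stabilizer is parabolic-in-reductive hence the orbit is projective). By Property (R), on that closed orbit $G_\Lambda.[w] = K_\Lambda.[w]$, so the closed orbit is a single $K_\Lambda$-orbit $K_\Lambda.[v_\infty]$. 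To upgrade ``$Lim(v)$ meets $K_\Lambda.[v_\infty]$'' to ``$Lim(v) = K_\Lambda.[v_\infty]$'', I would argue that any $[w] \in Lim(v)$ has, by Property (R), a reductive stabilizer and $K_\Lambda$-orbit equal to its $G_\Lambda$-orbit; but $[w]$ lies in the closure of $G_\Lambda.[\bar v]$, and an orbit whose closure contains it and which itself has reductive stabilizer must be the closed orbit — so $[w] \in K_\Lambda.[v_\infty]$. Connectedness of $Lim(v)$ is then automatically consistent, but the real input is the rigidity from reductivity: no intermediate orbits can appear in $Lim(v)$ because they would violate Property (R) or the closedness forced by compactness.

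For the existence of the algebraic one-parameter subgroup $\phi\colon \C^* \to G_\Lambda$ degenerating $[\bar v]$ to $[v_\infty]$: this is the Hilbert--Mumford criterion. Since $[v_\infty]$ lies in the closure of $G_\Lambda.[\bar v]$ and on the unique closed orbit, by the Hilbert--Mumford theorem (or the Kempf one-parameter-subgroup theory, \cite{GRS} Appendix C) there is an algebraic $\C^*$-subgroup of $G_\Lambda$ whose limit as $t \to 0$ sends $[\bar v]$ into the closed orbit; composing with an element of $G_\Lambda$ we may arrange the limit to be exactly $[v_\infty]$. I would also need to check that this $\C^*$ can be taken in $G_\Lambda$ rather than in $G$ — this is automatic since the whole degeneration problem is set up inside the reductive group $G_\Lambda$ acting on $\P(V)$.

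The main obstacle I anticipate is the step showing $Lim(v)$ cannot contain points of non-closed orbits — i.e. that the limit set, which a priori could ``spiral'' through various orbits in $\overline{G_\Lambda.[\bar v]}$, is forced by Property (R) together with compactness/connectedness to collapse onto the single closed orbit. The delicate point is ruling out that $Lim(v)$ is a proper compact connected $K_\Lambda$-invariant subset straddling several orbit strata; here one must exploit that every point of $Lim(v)$ individually has reductive stabilizer with $G_\Lambda$-orbit $= K_\Lambda$-orbit, which is a strong constraint (a Kempf--Ness type argument: a point with reductive stabilizer whose orbit-closure properly contains a smaller orbit cannot itself be a limit of a ``nice'' sequence), combined with the fact — to be extracted from the proof of Lemma \ref{lem2-3} and the convergence $\widetilde B_i \to e^\Lambda$ — that the sequence $\widetilde A_i . [C_0.v]$ behaves asymptotically like $e^{i\Lambda}$ acting, which on $\P(\mathcal U_j)$ is trivial, so the ``motion'' in $Lim(v)$ comes only from the slowly-varying part of $\widetilde A_i$, forcing accumulation on a single orbit.
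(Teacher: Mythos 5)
There is a genuine gap at the heart of your argument. You reduce everything to the claims that $\overline{G_\Lambda.[\bar v]}$ contains a unique closed orbit and that any point of it with reductive stabilizer must lie on that orbit. Both are affine-GIT statements and fail for orbit closures taken in projective space, which is where $Lim(v)$ lives: already for $\C^*$ acting on $\P^1$ the closure of the open orbit contains two closed orbits, and a non-closed orbit can perfectly well have a reductive (even trivial) stabilizer. Note also that Property (\textbf{R}) is a hypothesis on points of $Lim(v)$, not on $[\bar v]$, which in general does not belong to $Lim(v)$; so you are not entitled to the reductivity of the stabilizer of $[\bar v]$ or to the affineness of its orbit. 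What Property (\textbf{R}) does give is that every point of $Lim(v)$ lies on a compact, hence closed, $G_\Lambda$-orbit; but that alone cannot force $Lim(v)$ into a single orbit, since nothing you say excludes a connected continuum of distinct closed orbits (for instance a positive-dimensional family of $G_\Lambda$-fixed points) inside $\overline{G_\Lambda.[\bar v]}\cap \P(\mathcal U_j)$. This is exactly the ``straddling several orbit strata'' scenario you identify as the main obstacle, and your proposal does not actually close it.

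The paper closes it by localizing. One picks $[v_\infty]\in Lim(v)$ whose $G_\Lambda$-stabilizer has \emph{minimal dimension}, invokes Donaldson's equivariant slice $\P'$ at $[v_\infty]$ (this is where reductivity of the stabilizer is used), and observes that in a small neighborhood $\mathcal W$ of $[v_\infty]$ every point of $\overline{G_\Lambda.[\bar v]}$ lies in the $G_\Lambda$-orbit of a point of $\overline{O'}\cap\mathcal W$, where $O'=G_\Lambda.[\bar v]\cap\P'$ has only finitely many local components, each contained in a single orbit. Affine GIT applied in the slice then shows $[v_\infty]\in\overline{G_\Lambda.[v_\infty']}$ for every $[v_\infty']\in Lim(v)\cap\mathcal W$, and since boundary orbits have strictly larger stabilizers, minimality forces the two orbits to coincide; Property (\textbf{R}) converts this to a $K_\Lambda$-orbit statement, and the connectedness of $Lim(v)$ from Lemma \ref{lem2-4} --- which you dismiss as ``automatically consistent'' but which is in fact the engine that globalizes the local statement --- finishes the argument. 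Your Hilbert--Mumford/Kempf production of the one-parameter subgroup $\phi$ is the right idea, but it too must be run inside the affine slice rather than on the projective orbit closure.
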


\begin{proof}
This follows from exactly the same arguments as in \cite{DS2} (the discussion before Remark 3.18). For the convenience of readers we repeat the proof here.  Let $[v_\infty]$ be a point in $Lim(v)$ whose stabilizer group in $G_\Lambda$ has minimal dimension.   By \cite{Do10} we can find an equivariant slice $\P'$ at $[v_\infty]$ for the action of $G_\Lambda$.  Let $O$ be the $G_\Lambda$ orbit of $[w]$, and $O'=O\cap \P'$. Notice by general theory the closure $\overline {O'}$ is a (possibly reducible) algebraic variety. By Proposition \ref{prop2-8} we have $[v_\infty]\in \overline{O}$. So from the construction of $\P'$ in \cite{Do10} we can find a small neighborhood $ \mathcal W$ of $[v_\infty]$ in $\P$, such that  each component of $O'\cap \mathcal W$  is contained in a single $G_\Lambda$-orbit. Moreover any point in $\overline O\cap  \mathcal W$ is in the $G_\Lambda$-orbit of a point in $\overline{O'}\cap  \mathcal W$. In particular, $[v_\infty]\in \overline{O'}$.
  
  Now we can choose an open neighborhood $N$ of $[v_\infty]$ in $Lim(v)$ such that $N\subset  \mathcal W$.  By connectedness of $Lim(v)$ it suffices to show that $N\subset K_\Lambda. [v_\infty]$.  Now for any $[v_\infty']\in N$,  by the construction of slice we may find $g\in G_\Lambda$ such that $g. [v_\infty']\in \overline{O'}\cap  \mathcal W$. Then by the classical geometric invariant theory we know  $[v_\infty]$ is in the closure of the $K_\Lambda$-orbit  of $g. [v_\infty']$. Therefore $[v_\infty]$ and $[v_\infty']$ must be in the same $G_\Lambda$-orbit, since otherwise $[v_\infty']$ would have a stabilizer group in $G_\Lambda$ with smaller dimension, which contradicts our choice of $[v_\infty]$. By Property (\textbf{R}) again we conclude $[v_\infty']\in K_\Lambda. [v_\infty]$. 
\end{proof}

In particular, this says that  there is a two step degeneration from $[v]$ to $[v_\infty]$, through $[\bar v]$. Notice in contrast to $\lambda(t)$,  the above algebraic one-parameter subgroup $\phi(t)$ is constructed using abstract theory, so is in general not canonically defined.

\

In practice there are possible variants of the above discussion. Instead of a sequence $\{A_i\}$ we often have a continuous path 
$$\{A(t), t\in [0, \infty)\}.$$
The property \textbf{$(*)$} is then replaced by 

\

\textbf{$(**)$}:  for any sequence $t_i\rightarrow\infty$, passing to a subsequence, the path 
$$[0, 2]\rightarrow G; \; t \mapsto A(t_i+t)A^{-1}(t_i)$$ 
converges uniformly to a limit $ge^{t\Lambda }g^{-1}$ for some $g\in K$. 

\

In this case one can similarly prove that 
\begin{equation} \label{eqn2-3}
d(v)=\lim_{t\rightarrow\infty} t^{-1}\log |A(t). v|
\end{equation}
is well-defined, and agrees with the definition using $A(i)$ for $i\in \Z$. Then the above discussion can be repeated, and we also have a filtration of $\C^m$ defined by $A(t)$, just as (\ref{eqn2-1}).

\

\section{Asymptotics of K\"ahler-Ricci flow}

\subsection{A general discussion}

Let $(X, L)$ be an $n$-dimensional polarized K\"ahler manifold. Let $h(t)$ be a smooth family of Hermtian metrics on $L$ with induced K\"ahler metrics $\omega(t)\in 2\pi c_1(L)$.  We define a family of Hermitian inner products $H_t$ on $H^0(X, L)$  by
\begin{equation} \label{eqn3-7}
H_t(s_1, s_2):=\int_X \langle s_1, s_2\rangle_{h(t)} \omega^n(t).
\end{equation}

We assume $L$ is very ample, i.e. the natural map $F: X\rightarrow \P(H^0(X, L)^*)$ and moreover we assume that the natural map $\iota_k: \text{Sym}^k H^0(X, L)\rightarrow H^0(X, L^k)$ is surjective for all $k\geq 1$. Notice both can be achieved by replacing $L$ with $L^a$ for sufficiently big $a$.   Let  $E$ be $H^0(X, L)^*$ endowed with the metric induced by $H_0$. 
Following the notation of Section 2 we denote $G=GL(E)$ and $K=U(E)$. The path $H_t$ determines a smooth path $\widetilde A(t)$ in $G/K$ with $\widetilde A(0)=\text{Id}$. Let $A(t)$ be the parallel lift of $\widetilde A(t)$ to $G$, with respect to the natural connection when we view $G$ as a principal $K$ bundle over $G/K$. This means that for each $t$, $\dot A(t)A(t)^{-1}$ is Hermitian symmetric with respect to the metric induced by $H_0$.

 More concretely, choosing an orthonormal basis $\{s_\alpha\}$ of $H^0(X, L)$ with respect to $H_0$, we obtain a smooth family of orthonormal basis of $\{s_\alpha(t)\}$ of $H^0(X, L)$ with respect to $H_t$, 
  by solving the ODE 
\begin{equation}  \label{eqn3-5}
\frac{\p s_\alpha(t)}{\p t}=-\frac{1}{2} \dot{H}_t(s_\alpha(t), s_\beta(t))s_\beta(t)
\end{equation}
with initial value $s_\alpha(0)=s_\alpha$. It is easy to see the linear transformation on $E$ that maps the corresponding dual basis $s^\alpha$ to $s^\alpha(t)$ is independent of the choice of $\{s_\alpha\}$, and agrees with the above $A(t)$. Indeed,  with respect to the basis $\{s^\alpha\}$ we have
$$(\dot{A}(t)A(t)^{-1})_{\alpha \beta}=\frac{1}{2}\dot{H}_t(s_\beta(t), s_\alpha(t))$$
which is Hermitian symmetric.  Notice for any $s\in E^*$, we have

\begin{equation} \label{eqn3-6}
||A(t). s||_{H_0}=||s||_{H_t}. 
\end{equation}

The path $A(t)$ generates a family of embedding $F_t: X\rightarrow \P(E)$, with $F_t=A(t)\circ F$.
Let $\Hilb$ be the Hilbert scheme parametrizing sub-schemes of $\P(E)$ with the same Hilbert polynomial as $(X, L)$. By construction it is a closed subscheme of some $\P(V)$, where $V$ is a natural representation of $G$ and $\Hilb$ is $G$-invariant. Then the above $F_t$ gives rise to a continuous path $[X_t]$ in $\Hilb$ satisfying  $[X_t]=A(t). [X_0]$. 

\

In what follows we shall assume

\

\textbf{(H1)}: There is an element $\Lambda\in \sqrt{-1}Lie(K)$ such that $\{A(t)\}$ satisfies $(**)$. 

\

 We can then apply the discussion of Section 2 to the path $\{A(t)\}$ and the representation $V$, with $[v]$ replaced by the point $[X]$ in $\Hilb\subset \P(V)$. Since $\Hilb$ is closed and $G$-invariant,  the limit set $Lim(X)$ and the point $[\overline X]$  are both contained in $\Hilb^\Lambda$, the subscheme of $\Hilb$ parametrizing $\Lambda$-invariant sub-schemes.

We also know the $G_\Lambda$-isomorphism class of $[\overline X]\in \Hilb^\Lambda$ is uniquely determined by $A(t)$.
It is interesting to understand the coordinate ring of $\overline X$  in terms of the language of filtrations introduced in  \cite{NW}. For all $k\geq 1$, $G$ acts naturally on $\text{Sym}^k R_1$, which is
$\otimes^k R_1/S_k$ with the symmetric group $S_k$ naturally acting on $\otimes^k R_1$. 
By Section 2, we see that $\{A(t)\}$ generates a filtration of $\text{Sym}^k R_1$.  Then it also induces a filtration  of $R_{k}=H^0(X, L^k)$ under the map $\iota_k: \text{Sym}^k R_1\rightarrow R_k$. More precisely, for $s\in R_k$, we define 
$$d(s) \triangleq \inf\{d(f)|f\in \text{Sym}^k R_1, \iota_k(f)=s\}, $$
where $d(f)$ is defined in Lemma \ref{lem2-2}. Notice each $H_t$ defines a natural metric on $\text{Sym}^k R_1$, so induces a metric on $R_k$
$$||s||_{H_t^*}^2=\inf\{||f||_{H_t}^2|f\in \text{Sym}^k R_1, \iota_k(f)=s\}.$$
Using (\ref{eqn3-6}) we have
\begin{equation} \label{eqn3-8}
d(s)=\lim_{t\rightarrow\infty} t^{-1}\log ||s||_{H_t^*}.
\end{equation}
  We  also make the convention that $d(s)=0$ for $s\in H^0(X, L^0)\simeq \C$.
  For all $d\in \R$, we set
 \begin{align*}
   \F_dR_k \triangleq \{s\in R_k|d(s)\leq d\},  \qquad \F_dR \triangleq \bigoplus_{k\geq 0}\F_{d}R_k.
 \end{align*}
Let $R=\bigoplus_{k\geq 0}H^0(X, L^k)=\bigoplus_{k\geq 0} R_k$ be the homogeneous coordinate ring of $(X, L)$. 
  Then $\F:=\{\F_d\}_{d\in \R}$ is an increasing filtration of subspaces of $R$.  It is multiplicative in the sense that $\F_{d}R_k\cdot \F_{e}R_l\subset \F_{d+e}R_{k+l}$. This follows from the simple fact that 
for any $f_1\in \text{Sym}^k R_1$ and $f_2\in \text{Sym}^l R_1$ we have  $d(f_1\cdot f_2)=d(f_1)+d(f_2)$.

Clearly $\F$ is only discontinuous at a discrete set of values $d\in \R$, which is contained in the sub semigroup of $\R$ generated by the spectrum of $\Lambda$-action on $E^*$. We denote these by  $\cdots<d_{i-1}<d_i\cdots$, and define the associated  ring
 $$\overline R \triangleq \bigoplus_{i} \F_{d_i}R/\F_{d_{i-1}}R. $$
It is endowed with two gradings, one by $\{k\}$ and the other by $\{d_i\}$. 

\begin{prop} \label{prop3-1}
The coordinate ring of $\overline X$ is isomorphic to $\overline R$, and the action of $\Lambda$ is encoded in the grading by $\{d_i\}$.  
\end{prop}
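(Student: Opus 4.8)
The plan is to identify the coordinate ring of $\overline X$ with the associated graded ring $\overline R$ by tracking what happens to the linear system under the degeneration $A(t)$, level by level in $k$, and then assembling the pieces multiplicatively. First I would recall from Section 2 that the path $\{A(t)\}$ produces, for each $k$, a filtration of $\text{Sym}^k R_1$ by the function $d$, and that the limiting point $[\overline X]\in \Hilb^\Lambda$ is, by the general theory of the Hilbert scheme, the flat limit of the subschemes $[X_t]=A(t).[X_0]$. The key geometric input is that a flat family over a disk with $\C^*$-action degenerating $X$ to a $\Lambda$-invariant scheme has as its central fiber precisely the $\text{Proj}$ of the Rees-type associated graded ring; so I would first treat the ``model" case where $A(t)=C_0^{-1}e^{t\Lambda}C_0$ exactly, for which this is the standard description of a one-parameter test configuration's central fiber, and then argue that the genuine $A(t)$, which satisfies $(**)$ and hence is asymptotic to such a model by the constructions following Proposition \ref{prop2-5} (the transformations $\widetilde A_i\in G_\Lambda$, $\widetilde B_i\to e^\Lambda$, and $\lambda(t)=C_0^{-1}e^{t\Lambda}C_0$), produces the same flat limit. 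This last point is where the uniform convergence packaged in $(**)$, as opposed to mere subsequential convergence, is essential: it guarantees the filtration $\F$ — equivalently the function $d$ on each $R_k$ via \eqref{eqn3-8} — is genuinely determined by $A(t)$ and matches the weight filtration of the model one-parameter subgroup.

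Next I would make the identification explicit in each degree $k$. For a one-parameter subgroup $\lambda(t)$ of $G=GL(E)$ degenerating $[X]\in\Hilb\subset\P(V)$, the homogeneous ideal $I_k$ of $X$ in degree $k$ (i.e.\ the kernel of $\iota_k\colon \text{Sym}^k R_1\to R_k$, since we assumed $\iota_k$ is surjective and $L$ very ample, so $\text{Sym}^kR_1/I_k\cong R_k$) degenerates to its initial ideal with respect to the weight grading of $\Lambda$ on $\text{Sym}^kR_1$. The coordinate ring of $\overline X$ in degree $k$ is therefore $\text{Sym}^kR_1/\text{in}(I_k)$, and I would check that this is canonically isomorphic to the associated graded of $R_k=\text{Sym}^kR_1/I_k$ with respect to the filtration induced by pushing forward the $\Lambda$-weight filtration along $\iota_k$ — which is exactly $\bigoplus_i \F_{d_i}R_k/\F_{d_{i-1}}R_k$ by the definition of $d(s)$ as the infimum of $d(f)$ over lifts $f$. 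The $\Lambda$-action on the central fiber is the residual $\C^*$-action on $\text{in}(I_k)$, which is multiplication by the weight, i.e.\ the second grading $\{d_i\}$; this gives the last sentence of the proposition.

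Finally I would assemble: the ring structure on $\overline R$ is well-defined and compatible because $\F$ is multiplicative (stated in the excerpt, following from $d(f_1 f_2)=d(f_1)+d(f_2)$ for products in the symmetric algebra), and the isomorphisms in each degree are compatible with multiplication because $\iota$ is a ring map and taking initial terms respects products. Hence $\overline R\cong \bigoplus_k \text{Sym}^kR_1/\text{in}(I_k)$, which is the homogeneous coordinate ring of the flat limit $\overline X$. The main obstacle I anticipate is the comparison step in the first paragraph — showing that the ``approximate" path $A(t)$ satisfying $(**)$ yields the literal flat limit of the exact model one-parameter subgroup $\lambda(t)$, rather than something that only agrees with it up to a $G_\Lambda$-action: one has to feed the convergence $\widetilde B_i\to e^\Lambda$ into the Hilbert-scheme/properness machinery carefully, using that $\Hilb^\Lambda$ is where all limit points lie (as noted just before Proposition \ref{prop3-1}) and that the filtration $d$ on $R_k$ computed from $\|\cdot\|_{H_t^\ast}$ via \eqref{eqn3-8} is insensitive to the $C_i\to\text{Id}$ corrections. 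A secondary technical point is the interplay of the two gradings and checking that the discontinuity set $\{d_i\}$ of $\F$ really is discrete and contained in the semigroup generated by $\mathcal S(E^\ast)$, so that $\overline R$ is a well-defined bigraded ring; this is essentially bookkeeping given the finiteness of the spectrum of $\Lambda$.
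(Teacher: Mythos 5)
The degree-by-degree identification in your second and third paragraphs is essentially the paper's own argument: the authors define a surjection $\Phi\colon\bigoplus_{k\geq0}\text{Sym}^kR_1\to\overline R$ whose kernel is the initial ideal $\overline I$ of $I$ with respect to the weight grading of $C_0^{-1}\Lambda C_0$, using precisely the two facts you isolate, namely $d(f)=\sup\{d\mid f_d\neq0\}$ on $\text{Sym}^kR_1$ and $d(s)=\inf\{d(f)\mid\iota_k(f)=s\}$ on $R_k$, together with multiplicativity of $\F$. So the algebraic core of your proposal is sound and matches the paper.

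The problem is in your first paragraph, and it is exactly the step you flag as ``the main obstacle'': it is both unnecessary and, as stated, false. The point $[\overline X]$ is \emph{not} the flat limit of the subschemes $[X_t]=A(t).[X_0]$; it is by definition the point $[\bar v]$ of Section~\ref{sec:fdim} with $[v]=[X]$, i.e. $\lim_{t\to\infty}C_0\lambda(t).[X]$ for the model one-parameter subgroup $\lambda(t)=C_0^{-1}e^{t\Lambda}C_0$. The sequential limits of the genuine path $A(t).[X]$ lie in the $K$-orbit of $Lim(X)=K_\Lambda.[X_\infty]$, which by Proposition~\ref{prop2-8} and Theorem~\ref{thm2-12} is only contained in the \emph{closure} of the $G_\Lambda$-orbit of $[\overline X]$; this is the two-step degeneration $[X]\rightsquigarrow[\overline X]\rightsquigarrow[X_\infty]$, and in general $[\overline X]\neq[X_\infty]$. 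Hence the comparison you propose (that $(**)$ forces $A(t)$ and $\lambda(t)$ to have the same limit in $\Hilb$) cannot be carried out --- but it is not needed, since the proposition concerns $\overline X$ itself, for which your ``model case'' computation applies directly. What \emph{is} genuinely needed, and what you mention only in passing, is that the analytic filtration $d$ defined from $A(t)$ coincides with the weight filtration of $C_0^{-1}\Lambda C_0$ on $\text{Sym}^kR_1$; this is supplied by Proposition~\ref{prop2-5} (the splittings $V_s=W_s\oplus V_{s+1}$ with $C_0.W_s=U_s$). A further minor caveat: $\Lambda$ may have irrational spectrum, so $\lambda(t)$ is only a real one-parameter subgroup and the central fibre should be described via the initial ideal for the real weight grading rather than via a Rees algebra or test configuration, which the paper introduces only afterwards by perturbing $\Lambda$ to a rational $\Gamma$.
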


\begin{proof}
The discussion of Section 2 produces an element $C_0\in G$ which identifies each $W_s$ with $U_s$ in $E$ (we adopt the notation there), such that
\begin{align*}
 C_0^{-1}. [\overline X]=\lim_{t\rightarrow\infty}C_0^{-1}e^{t\Lambda}C_0. [X]. 
\end{align*}
The action of $C_0^{-1}\Lambda C_0$ defines a \emph{new} grading $\{d\}$ on the ring $\bigoplus_{k\geq 0}\text{Sym}^k R_1$. Namely, for each $k$, we have a weight decomposition 
$$\text{Sym}^k R_1=\bigoplus_{d\in \R} V_{k, d}. $$
It is easy to see from definition that for any $f\in \text{Sym}^k R_1$ with a weight decomposition $f=\sum _{d} f_d$ (this is of course always a finite sum), we have
$$d(f)=\sup\{d|f_d\neq 0\}.$$
Now we define a map 
$$\Phi: \bigoplus_{k\geq 0} \text{Sym}^k R_1\rightarrow \overline R$$
which sends an element $f\in \text{Sym}^kR_1$ to the corresponding class $[\iota_k(f)]$ in $\F_{d_i}R/\F_{d_{i-1}}R$ for $d_i=d(f)$. By the above discussion $\Phi$ is surjective. Let $I$ be the saturated ideal defining $X$, then the kernel of $\Phi$ is exactly the \emph{initial ideal} of $I$, i.e. the ideal generated by the initial terms of elements in $I$, with respect to the above new grading on $\bigoplus_{k\geq 0} \text{Sym}^k R_1$. 
We denote the initial ideal mentioned above by $\bar{I}$. Then we have $\overline R=\bigoplus_{k\geq0} \text{Sym}^k R_1/\overline I$. From the construction of the Hilbert scheme, the latter is exactly the  homogeneous coordinate ring of $C_0^{-1}. \overline X$, and the grading by $\{d_i\}$ on $\overline R$ corresponds to the action of $C_0^{-1}\Lambda C_0$ on $C_0^{-1}. \overline X$. 

 Now the conclusion follows from the fact that $(\overline X, \Lambda)$ and $(C_0^{-1}. \overline X, C_0^{-1}\Lambda C_0)$ have isomorphic graded homogeneous ring. 
 \end{proof}

 For our purpose, it is often convenient to re-grade $\F$. Let $\underline\lambda$ be a number smaller than the smallest eigenvalue of the $\Lambda$ action on $R_1$, then we define 
 \begin{align*}
   \F'_d R_k \triangleq \F_{d-\underline \lambda k}R_k, \qquad \F_d'R \triangleq \bigoplus_{k} \F'_dR_k. 
 \end{align*}
 Then the new filtration $\F'=\{\F'_dR\}$ is again multiplicative. Moreover, it is ``positive" in the sense that $\mathcal F'_{0}R=\C$. It is easy to see the  graded ring associated to $\F'$ only differs from $\overline R$ by a shift of the grading, and geometrically, it defines the same variety $\overline X$ with the same projective action of $\Lambda$, with a different choice of \emph{linearization} on $R_1$ by $\Lambda'=\Lambda-\underline\lambda Id$. 
 
We say the filtration $\F$ is \emph{rational} if  we can find $\underline \lambda$ such that $\Lambda'$ has rational spectrum. In this case, we can find a smallest integer $D$ and some $\underline \lambda$, such that $D\cdot \mathcal S(\Lambda')\subset \Z$. Then we define a new filtration $\{\F''_jR\}_{j\in \Z_{\geq0}}$by setting
 $$\F''_j R=\bigcup_{d\leq D^{-1}j}\F'_{d}R.$$
 The associated graded ring again defines the same variety $\overline X$, but the induced action has been re-scaled to $D\Lambda'$. 
 
 As in \cite{NW, Sz3} we form the Rees algebra 
$$\text{Rees}(\F'')=\bigoplus_{k\geq 0} \F''_k t^k\subset R[t]$$
which gives rise to a test configuration for $X$ with central fiber $\overline X$. Geometrically, the rationality of $\F$ means that $\Lambda$ generates an algebraic one parameter subgroup $\chi: \C^*\rightarrow G$, 
such that $\displaystyle \lim_{t\rightarrow 0} \chi(t). [X]=C_0^{-1}. [\overline X]$. 

When $\F$ is not rational, $\sqrt{-1}\Lambda$ generates a compact subtorus $T\subset K$, with rank bigger than one. Then we can still construct a test configuration in a non-canonical way. More precisely, we want to perturb $\Lambda$ within $\sqrt{-1}Lie(T)$, while keeping the associated ring $\overline R$ invariant (of course the grading will change). Notice when we vary $\Lambda$ in $\sqrt{-1}Lie(T)$, we actually change the grading on $\bigoplus_{k\geq 0}\text{Sym}^kR_1$; indeed we are \emph{weakening} the grading in that a graded piece does not split but different graded pieces can emerge. Now suppose $\overline I$ is generated by the $g_1, \cdots, g_p$, where each $g_i$ is the initial term of some $f_i\in I$ with respect to the grading defined by $C_0^{-1}\Lambda C_0$. Then it follows that  for a rational $\Gamma\in Lie(T)$ close to $\Lambda$, $g_i$ is also the initial term of $f_i$ with respect to the grading defined by $C_0^{-1}\Gamma C_0$. This implies the initial ideal  $J$ of $I$ with respect to the new grading contains $\overline I$. By the proof of Proposition \ref{prop3-1} we know for all $k\geq 1$, 
$$\dim \text{Sym}^k R_1/\overline I_k=\dim \overline R_k=\dim R_k.$$
Similarly 
$$\dim \text{Sym}^k R_1/J_k=\dim R_k.$$
This implies $J=\overline I$. It follows that we can use the filtration defined by $\Gamma$ to construct a test configuration for $X$ with central fiber $\overline X$, and the induced $\C^*$-action is generated by $\Gamma$.

\

Now we come back to the filtration $\mathcal F$. For all $k\geq 1$, we have a natural $L^2$ inner-product $H_t$ on $R_k$, defined just like (\ref{eqn3-7}).  Suppose $(X, h(t))$ satisfies an extra hypothesis

\

\textbf{(H2)}:   For all $k\geq 1$, there is a constant $C_k>0$ such that for all $t\geq 0$,  we have on $R_k$, 
\begin{align*}
 C_k^{-1} H_t\leq H_t^*\leq C_k H_t.
\end{align*}
Then by (\ref{eqn3-8}), we have  
$$\displaystyle e(s)=\lim_{t\rightarrow\infty} t^{-1}\log ||s||_{H_t}.$$
In particular, the filtration $\F$, and hence $(\overline X, \Lambda)$, is intrinsically defined by $(X, h(t))$. In other words, suppose we replace $L$ by $L^k$ for some $k\geq 1$ in the above discussion, and suppose again \textbf{(H1)} holds, then we will end up with the same filtration. 

\

\subsection{K\"ahler-Ricci flow on Fano manifolds}
\label{KRF} 

Now we prove Theorem \ref{thm1-3}, so we assume $X$ is Fano. Let $\omega(t)$ be a solution of (\ref{eqn1-1}) with $\omega_0\in 2\pi c_1(X)$.  To obtain the corresponding family of  Hermitian metrics $h(t)$ on $K_{X}^{-1}$, we use the normalization
\begin{equation} \label{eqn3-0}
\int_X \Omega_{h(t)}=\int_X \omega^n(t), 
\end{equation}
where $\Omega_{h(t)}$ is the volume form on $X$ naturally associated to $h(t)$. The corresponding K\"ahler potential $\phi(t)=-\log (h(t)h(0)^{-1})$ satisfies the usual normalized equation, 
\begin{align*}
  \dot{\phi}=\log \frac{\omega_{\phi}^n}{\omega^n} + \phi - u_{\omega},
\label{eqn:HA01_5}
\end{align*}
where $u_{\omega}$ is the Ricci potential with condition $\int_M e^{-u_{\omega}} \frac{\omega^n}{n!}=(2\pi)^n$.

We first summarize the relevant results proved in \cite{CW2}.  First of all,  as $t\rightarrow\infty$,  one can take sequential polarized Gromov-Hausdorff limits, in the sense of \cite{DS1}. Such a limit $Z$ is naturally a $\Q$-Fano variety, 
endowed with a weak K\"ahler-Ricci soliton metric $\omega_Z$, in the sense of \cite{BW}(c.f. the Remark after Proposition 4.15 of \cite{DS1} for a similar discussion).
In particular, there is a continuous Hermitian metric $h_Z$ on the $\Q$-line bundle $K_Z^{-1}$, which is smooth on the smooth locus $Z^s$ of $Z$, with curvature form 
$-\sqrt{-1} \omega_Z$. Moreover, $\omega_Z$ is a genuine K\"ahler form on $Z^s$, and  there is a holomorphic vector field $V_Z$ on $Z$, such that $JV_Z$ generates holomorphic transformations of $Z$ that preserves $\omega_Z$, and such that the equation  $Ric(\omega_Z)=\omega_Z+\L_{V_Z}\omega_Z$ holds on $Z^s$. 

Let $\mathcal C$ be the set of all such sequential limits, and $\overline{\mathcal C}$ be the union of $\mathcal C$ and  $\{X_t=(X, J, \omega(t))|t\geq 0\}$.  Then $\overline {\mathcal C}$ is endowed with the polarized Gromov-Hausdorff topology, in the sense of \cite{DS1}. It is  easy to see both $\mathcal C$ and $\overline{\mathcal C}$ are compact and connected (we refer to \cite{DS2}, Lemma 2.7 and Lemma 3.2 for a proof of similar results). 

As a consequence of the main result of \cite{CW2} and the discussion in \cite{DS1}, there are positive integers $r$ and $m$ (depending only on $(X, \omega_0)$), such that any $Z\in \overline{ \mathcal C}$ is holomorphically embedded into $\P^{m-1}$ by $L^2$ orthonormal sections of $H^0(Z, K_Z^{-r})$, and the image lies in a fixed Hilbert scheme $\Hilb$.  Moreover, the natural map $\overline{\mathcal C}\rightarrow \Hilb/U(m)$ is continuous. We may also assume that the map $\iota_k: \text{Sym}^kH^0(X, K_X^{-r})\rightarrow H^0(X, K_X^{-rk})$ is surjective for all $k\geq 1$. We are therefore in the setting of Section 3.1, with $L=K_X^{-r}$, and we obtain the corresponding path $A(t)$. 

\

\textbf{Case I}: Every $Z\in \mathcal C$ is a non-trivial K\"ahler-Ricci soliton, i.e. $V_Z\neq 0$. This is our main interest in this paper -- the other case is easier and will be treated later.  

\begin{prop} \label{prop3-2}
Property \textbf{(H1)} holds in this case.
\end{prop}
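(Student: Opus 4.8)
The goal is to establish property \textbf{(H1)} for the path $A(t)$ coming from the K\"ahler-Ricci flow in Case I, i.e. to produce an element $\Lambda \in \sqrt{-1}\,Lie(K)$ such that $\{A(t)\}$ satisfies $(**)$: for any sequence $t_i\to\infty$, after passing to a subsequence, the path $t\mapsto A(t_i+t)A(t_i)^{-1}$ on $[0,2]$ converges uniformly to $g e^{t\Lambda} g^{-1}$ for some $g\in K$. The strategy is to feed the \emph{uniform} convergence statement of the Hamilton-Tian conjecture from \cite{CW2} (as emphasized in the introduction and to be recalled in Section~\ref{KRF}) into the formula $\|A(t).s\|_{H_0}=\|s\|_{H_t}$ from (\ref{eqn3-6}), so that the asymptotics of $A(t)$ are controlled by the asymptotics of the $L^2$ metrics $H_t$ on $H^0(X,K_X^{-r})$, which in turn are controlled by the geometry of $(X,\omega(t))$ near the soliton limit.

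First I would fix a sequence $t_i\to\infty$ and use the compactness of $\overline{\mathcal C}$ together with the local uniform convergence from \cite{CW2} to pass to a subsequence so that the flow over $[t_i-T,t_i+T]$ converges \emph{naturally} (in the sense of polarized Gromov-Hausdorff convergence, smoothly away from singularities) to the limit flow induced by a fixed K\"ahler-Ricci soliton $(Z,\omega_Z,V_Z)$ with $V_Z\neq0$. The self-similar structure means that this limit flow is, up to automorphisms, $\omega_Z(t)=\varphi_t^*\omega_Z$ where $\varphi_t$ is generated by (the real holomorphic part of) $V_Z$; hence on the level of the induced Hermitian metrics on $H^0(Z,K_Z^{-r})$ it is the one-parameter group $e^{t\Lambda_Z}$ where $\Lambda_Z\in\sqrt{-1}Lie(U(H^0(Z,K_Z^{-r})))$ is the infinitesimal generator of the soliton vector field action on sections (this is where $V_Z\neq0$ is used: $\Lambda_Z\neq0$ and its spectrum is intrinsic). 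Next, since the embedding of $X_t$ into $\P^{m-1}$ by $H_t$-orthonormal sections converges to that of $Z$, and since $A(t_i+t)A(t_i)^{-1}$ is precisely the change-of-basis matrix comparing the $H_{t_i+t}$-orthonormal frame with the $H_{t_i}$-orthonormal frame transported along the flow, the convergence of the flow over $[t_i,t_i+2]$ to the soliton flow forces $A(t_i+t)A(t_i)^{-1}$ to converge uniformly on $[0,2]$, after conjugating by the $K$-element $g$ that records the (non-canonical) identification of the Gromov-Hausdorff limit frame with the fixed model $H^0(Z,K_Z^{-r})$, to $g e^{t\Lambda_Z} g^{-1}$. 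One must check that the limit $\Lambda_Z$ does not depend on the chosen subsequence: this is exactly the point made in the introduction — by \cite{CW2} two equivalent rescaling sequences give the same limit soliton, and the uniqueness part (soliton rigidity, via the connectedness of $\mathcal C$ and Matsushima-type reductivity) pins down the conjugacy class of $\Lambda_Z$, so we may take a single $\Lambda$ for all sequences.

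The main obstacle I anticipate is the passage from geometric convergence of the rescaled flows to \emph{uniform} convergence of the finite-dimensional paths $A(t_i+t)A(t_i)^{-1}$ on the compact interval $[0,2]$, rather than merely pointwise-in-$t$ or $C^0$-in-$t$-after-further-subsequence convergence. This requires: (i) a uniform (in $t$ large) equivalence between $H_t^*$ and the Fubini-Study-type metrics, equivalently uniform control of $\dot H_t$, which should follow from the a priori estimates along the flow near the limit in \cite{CW2} (Perelman-type bounds on the Ricci potential, diameter, and the partial $C^0$-estimate) — this is essentially hypothesis \textbf{(H2)}-type control and should be quotable; (ii) equicontinuity of $t\mapsto A(t_i+t)A(t_i)^{-1}$ on $[0,2]$ uniformly in $i$, so that Arzel\`a-Ascoli upgrades subsequential pointwise limits to uniform limits, which again reduces to a uniform bound on $\|\dot A(s)A(s)^{-1}\|$ for $s\in[t_i,t_i+2]$, i.e. on $\|\dot H_s\|_{H_s}$; and (iii) identifying the uniform limit with a genuine one-parameter subgroup $g e^{t\Lambda} g^{-1}$ rather than an arbitrary path — here one uses that the limit flow is literally self-similar, so the limiting path satisfies the autonomous ODE generated by $\Lambda_Z$. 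Assembling (i)–(iii) is where the real work lies; the algebraic bookkeeping afterwards is routine and the conclusion $(**)$, hence \textbf{(H1)}, follows.
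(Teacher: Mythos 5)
Your strategy for the convergence part is essentially the paper's: a uniform Lipschitz bound on $\tau\mapsto A(t_i+\tau)A(t_i)^{-1}$ coming from Perelman's estimates on $\dot\phi$ and $\Delta\dot\phi$ (hence on $\dot H_t$), Arzel\`a--Ascoli, and identification of the limit path via the self-similarity $h(t_i+\tau)\to\Phi_\tau^*h_Z$. But there is a genuine gap in how you produce a \emph{single} $\Lambda$ valid for all sequences $t_i\to\infty$, which is the whole content of $(**)$ beyond subsequential compactness. You justify this by asserting that two rescaling sequences give the same limit soliton and by invoking "soliton rigidity via connectedness of $\mathcal C$ and Matsushima-type reductivity." That is circular at this stage: uniqueness of the limit soliton is the \emph{conclusion} of Theorem \ref{thm1-3}, obtained only after \textbf{(H1)} is in place and the finite-dimensional machinery of Theorem \ref{thm2-12} (where reductivity of $\Aut(Z,V_Z)$ actually enters, via Property (\textbf{R})) is applied. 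What \cite{CW2} provides is only that each subsequence has \emph{some} soliton limit; a priori different subsequences could produce different $Z$ with vector fields $V_Z$ of different spectra, and then no single $\Lambda$ exists. The paper closes this gap by a separate argument (Lemma \ref{lem3-1}): the map $\mathcal V:\mathcal C\to\R^N$, $Z\mapsto JV_Z$ recorded in a compatible basis, is continuous on the compact connected set $\mathcal C$ (proved by smooth convergence of the Ricci potentials on the regular part), and its image is countable, because there are countably many subtori $T'$ of $\mathbb T$, each fixed-point scheme $\Hilb^{T'}$ has finitely many components, and on a fixed component the weight decomposition of $H^0(Z,-mkK_Z)$ --- hence the equation $Fut_{JV_Z}(V')=0$ characterizing $V_Z$ --- is constant. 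A continuous map from a connected set to a countable set is constant, so $JV_Z$ is literally the same for all $Z\in\mathcal C$. Some argument of this type is indispensable and is missing from your proposal.

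A smaller omission: in your step (iii), the subsequential limit $A_\infty(\tau)$ is a priori only of the form $g(\tau)e^{\tau\Lambda}h$ with $g(\tau)\in U(m)$ possibly depending on $\tau$, since an orthonormal basis with respect to $\Phi_\tau^*h_Z$ is determined only up to a unitary transformation. The paper removes the $\tau$-dependence of $g$ by using the gauge condition that $\dot A_\infty(\tau)A_\infty(\tau)^{-1}$ is Hermitian symmetric (inherited from the parallel lift defining $A(t)$); your appeal to "the autonomous ODE generated by $\Lambda_Z$" skips over exactly this point.
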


We need to first determine the element $\Lambda$.
 Given $Z\in \mathcal C$, the vector field $JV_Z$ generates a real one-parameter group of holomorphic isometric actions of $Z$. It induces naturally a one-parameter subgroup $\chi(t)$ of the group $\mathbb U$ of unitary transformations of $H^0(Z, K_Z^{-r})^*$ (with respect to the natural $L^2$-Hermitian inner product defined by $h_Z$). Taking the closure of $\chi(t)$, we obtain a torus $T(Z)\subset \mathbb U$. Then under the natural embedding of $Z\subset \P(H^0(Z, K_Z^{-r})^*)$, the action of $T(Z)$ keeps $Z$ invariant. 
 
  We have a weight space decomposition 
\begin{equation} \label{eqn3-3}
H^0(Z, K_Z^{-r})^*=\bigoplus_{\lambda\in \R} H^0_\lambda(Z, K_Z^{-r})^* 
\end{equation}
such that the $V_Z$-action on $H^0_\lambda(Z, K_Z^{-r})^*$ is given by multiplication by $\lambda$. Clearly different weight spaces are $L^2$-orthogonal. We list the non-trivial weights as $\lambda_1>\lambda_2>\cdots$, and choose an $L^2$-orthonormal basis of $H^0_{\lambda_i}(Z,  K_Z^{-r})$ for each $i$.  Then we put these together in an order that the weights are decreasing, and form an orthonormal basis of $H^0(Z, K_Z^{-r})^*$. For simplicity we call such a basis \emph{compatible}. 

Given a compatible basis we can identify $H^0(Z, K_Z^{-r})^*$ with $\P^{m-1}$. Then we can view the torus $T(Z)$ as a subgroup of $\mathbb T$, the diagonal maximal torus in $U(m)$,  and $JV_Z$ as an element in $Lie(T)=\R^k \subset \R^N$. Notice these do not depend on the choice of a compatible basis. 

\begin{lem}
The map $\mathcal V: \mathcal C\rightarrow \R^N$ sending $Z$ to $JV_Z$ is continuous. In particular, the image of $\mathcal V$ is compact and connected. 
\end{lem}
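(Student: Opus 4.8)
The plan is to exploit the polarized Gromov--Hausdorff convergence together with the compatibility of the embeddings described just before the lemma. First I would fix a sequence $Z_j\to Z_\infty$ in $\mathcal{C}$ and recall that, by the cited consequence of \cite{CW2} and \cite{DS1}, all the $Z_j$ and $Z_\infty$ sit in the same Hilbert scheme $\Hilb\subset\P^{m-1}$, the embeddings being given by $L^2$-orthonormal bases of $H^0(Z_j,K_{Z_j}^{-r})$, and the map $\overline{\mathcal{C}}\to\Hilb/U(m)$ is continuous. So, after acting by suitable unitary transformations $u_j\in U(m)$, I may assume $[Z_j]\to[Z_\infty]$ in $\Hilb$ and, by passing to a subsequence, that $u_j$ itself converges (compactness of $U(m)$) and that the holomorphic vector fields $V_{Z_j}$, viewed as elements of the Lie algebra of $U(m)$ acting on $H^0(Z_j,K_{Z_j}^{-r})^*$, converge to some element $W\in\mathfrak{u}(m)$; here I use that $|V_{Z_j}|$ is controlled, which follows because $JV_{Z_j}$ generates isometries of $(Z_j,\omega_{Z_j})$ and the metrics, volumes, and Sobolev constants are uniformly bounded along the flow by \cite{CW2}, so the induced infinitesimal unitary transformation has bounded operator norm.

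The next step is to identify the limit $W$ with $V_{Z_\infty}$. The defining equation $Ric(\omega_{Z_j})=\omega_{Z_j}+\mathcal{L}_{V_{Z_j}}\omega_{Z_j}$ holds on the smooth locus, and the smooth convergence $Z_j\to Z_\infty$ away from the singular set (part of the input from \cite{CW2}, and discussed in \cite{DS1, DS2}) lets me pass to the limit in this equation: on $Z_\infty^s$ the limit field $W$ satisfies $Ric(\omega_{Z_\infty})=\omega_{Z_\infty}+\mathcal{L}_{W}\omega_{Z_\infty}$, and $JW$ generates $\omega_{Z_\infty}$-preserving holomorphic transformations. By the uniqueness of the soliton vector field on a given $\Q$-Fano variety (the soliton $V_Z$ is characterized, up to conjugation, within the reductive part of the automorphism group, cf. \cite{BW}), and since we have already aligned the embeddings, $W$ must be exactly $V_{Z_\infty}$ as an element of $\mathfrak{u}(m)$. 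A small point to check is that the compatible-basis normalization only pins down the embedding up to the centralizer of $V_{Z_\infty}$, but $V_{Z_\infty}$ itself is unchanged under that centralizer, so $\mathcal{V}(Z_\infty)$ is well defined and equals $\lim_j\mathcal{V}(Z_j)$.

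Finally, since the argument shows every subsequence of $\mathcal{V}(Z_j)$ has a further subsequence converging to $\mathcal{V}(Z_\infty)$, the full sequence converges, giving continuity of $\mathcal{V}$; compactness and connectedness of the image then follow from the already-established compactness and connectedness of $\mathcal{C}$. The main obstacle I anticipate is the second step: making the passage to the limit in the soliton equation rigorous across the singular set and ruling out a ``jump'' of the vector field, i.e. genuinely upgrading Gromov--Hausdorff convergence to convergence of the infinitesimal $U(m)$-actions. This is where the strong convergence statement of \cite{CW2} (smooth convergence on the regular part, plus the structure of the singular set) is essential, and one must invoke the uniform geometric bounds to prevent the soliton field from concentrating near the singularities.
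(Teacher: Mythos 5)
Your proposal is correct and follows the same overall strategy as the paper: pass to a subsequence, use local smooth convergence of the metrics on the regular part of $Z_\infty$ to get convergence of the soliton vector fields, and then check that the limit determines $\mathcal V(Z_\infty)$. The one place where you genuinely diverge is the identification of the limit field $W$ with $V_{Z_\infty}$. You pass to the limit in the soliton equation and then appeal to uniqueness of the soliton vector field from \cite{BW}; the paper avoids any uniqueness statement by writing $Ric(\omega_{Z_i})=\omega_{Z_i}+\sqrt{-1}\p\bp h_i$ locally, using elliptic estimates to obtain $h_i\to h_\infty$ smoothly on compact subsets of $Z_\infty^s$, and observing that $V_{Z_i}=\nabla_{\omega_{Z_i}}h_i$ therefore converges to $\nabla_{\omega_{Z_\infty}}h_\infty=V_{Z_\infty}$ essentially by construction. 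This is both simpler and safer: the uniqueness you quote is uniqueness up to conjugation in $\Aut(Z_\infty)$, which by itself is too weak to pin down $W$ as a specific element of $\sqrt{-1}\mathfrak u(m)$, so you would in any case need the gradient structure of the limit (i.e.\ the convergence of the Ricci potentials) to close that step. The other point the paper makes explicit, and which you only gesture at, is how the limit is read off as an element of $\R^N$: one takes compatible bases $\{s_{i\alpha}\}$ with $\L_{V_{Z_i}}s_{i\alpha}=\mu_{i,\alpha}s_{i\alpha}$, passes these eigenvalue relations to the limit on $Z_\infty^s$ (the $\mu_{i,\alpha}$ being bounded via Perelman's estimates, which also supplies the operator-norm bound you assert), and uses normality of $Z_\infty$ to extend the eigensection relation across the singular set, so that the limit basis is again compatible. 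This concrete extension argument is what replaces your anticipated difficulty about a ``jump'' of the vector field across the singular set.
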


\begin{proof} Suppose we have a sequence $Z_i\in \mathcal C $ converging to $Z_\infty$. Let $\{s_{i\alpha}\}$ be a compatible basis of $H^0(Z_i, K_{Z_i}^{-r})$.  Passing to a subsequence we may assume $\{s_{i\alpha}\}$ converges to an orthonormal basis $\{s_{\infty\alpha}\}$ of $H^0(Z_\infty, K_{Z_\infty}^{-r})$, under the polarized Gromov-Hausdorff convergence. Given any smooth point $p\in Z_\infty$, we may view the convergence in a neighborhood of $p$ as the smooth convergence of the metric tensors $\omega_{Z_i}$ to $\omega_{Z_\infty}$ on a fixed ball $B\subset \C^n$.  Writing $Ric(\omega_{Z_i})=\omega_{Z_i}+i\p\bp h_i$ and using standard elliptic estimates we may assume $h_i$ converges smoothly to $h_\infty$ on the half ball $B/2$.  Therefore $V_{Z_i}=\nabla_{\omega_{Z_i}} h_i$ converges smoothly to $V_{Z_\infty}$ on any compact subsets of $Z_\infty$. Suppose $\L_{V_{Z_i}}s_{i\alpha}=\mu_{i, \alpha}s_{i\alpha}$, then passing to a subsequence $\mu_{i, \alpha}$ converges to a limit $\mu_{\infty,\alpha}$ and $\L_{V_{Z_\infty}}s_{\infty\alpha}=\mu_{\infty, \alpha}s_{\infty\alpha}$ over $Z_\infty^s$. Since $Z_\infty^s$ is normal we know $s_{\infty\alpha}\in H^0_{\mu_{\infty, \alpha}}(Z_\infty, K_{Z_\infty}^{-r})$. This shows that $\{s_{\infty\alpha}\}$ is a compatible basis, and the map $\mathcal V$ is continuous. 
\end{proof}

\begin{lem} \label{lem3-1}
 There is a unique element $\xi\in \R^N$, such that $JV_Z=\xi$ for all $Z\in \mathcal C$. 
 \end{lem}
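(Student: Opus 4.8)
The plan is to show that the continuous map $\mathcal V\colon \mathcal C\to \R^N$, whose image is compact and connected by the previous lemma, is in fact locally constant, hence constant on the connected set $\mathcal C$. Since the target $\R^N$ is discrete only in a trivial sense, the real content is to produce a discreteness or rigidity statement: the possible vectors $JV_Z$ arising from weak K\"ahler-Ricci solitons on the finitely many Hilbert-scheme components under consideration lie in a set that, intersected with the compact connected image of $\mathcal V$, reduces to a point. First I would recall that each $JV_Z$ lies in $Lie(\mathbb T)=\R^N$ and is, after choosing a compatible basis, a diagonal matrix whose entries are the weights $\lambda_i$ of the $V_Z$-action on $H^0(Z,K_Z^{-r})^*$; these weights are constrained by the soliton normalization.

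Next I would invoke the uniqueness of the soliton vector field in its appropriate sense. The key structural fact is that for a $\Q$-Fano variety $Z$ admitting a weak K\"ahler-Ricci soliton, the soliton vector field $V_Z$ is uniquely determined up to conjugation inside a maximal torus, and crucially its ``weights'' are determined by the algebro-geometric data: $JV_Z$ is characterized as the unique minimizer of the Tian-Zhu type functional (the normalized volume / Futaki-type invariant) on $Lie(\mathbb T)$. This functional is convex and its minimizer depends only on the fixed Hilbert scheme component and the torus, not on the metric. Concretely, one shows that $\mathcal V(\mathcal C)$ is contained in the (finite) set of such minimizers as $Z$ ranges over the components of $\Hilb$ meeting $\mathcal C$. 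I would implement this by: (i) fixing $Z\in\mathcal C$ and its torus $T(Z)\subset\mathbb T$; (ii) noting $JV_Z$ solves the soliton equation, which by the Tian-Zhu argument (or Berman-Witt-Nystr\"om in the singular setting) forces $JV_Z$ to be the critical point of the relevant functional on $Lie(T(Z))$; (iii) observing this critical point is unique by strict convexity, so $JV_Z$ is pinned down by $(Z, T(Z))$, and indeed only depends on the deformation class, giving finitely many values.

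Then continuity plus connectedness of $\mathcal C$ finishes it: the image $\mathcal V(\mathcal C)$ is connected, but it is contained in a finite (hence totally disconnected) set, so it is a single point $\xi$; and $JV_Z=\xi$ for all $Z\in\mathcal C$ by definition of $\mathcal V$. Uniqueness of $\xi$ is immediate since any two such elements would both equal $\mathcal V(Z)$ for any fixed $Z$.

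The main obstacle I anticipate is step (ii)–(iii): making rigorous that the weak K\"ahler-Ricci soliton vector field on a possibly singular $\Q$-Fano limit is genuinely the unique critical point of the Tian-Zhu functional, and that this critical point varies in a discrete (indeed locally constant) fashion as $Z$ moves in $\mathcal C$. In the smooth case this is classical (Tian-Zhu uniqueness), but here one must either cite the singular analogue from \cite{BW} or argue directly that the functional and its minimizer depend only on the embedded projective data shared along $\mathcal C$, using that $\mathcal C$ lands in finitely many Hilbert-scheme components. An alternative, possibly cleaner route that avoids heavy functional analysis: show directly that $JV_Z$, as an element of $Lie(\mathbb T)$, is determined by the character through which $V_Z$ acts on $\det H^0(Z,K_Z^{-r})$ together with the Hilbert-polynomial data — i.e., it is the \emph{Futaki invariant} direction, which is manifestly locally constant on $\mathcal C$ — and then conclude by connectedness. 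Either way, the crux is converting ``the soliton vector field'' into an algebraic invariant that cannot jump along a connected family.
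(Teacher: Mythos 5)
Your proposal is correct and follows essentially the same route as the paper: both reduce the lemma to showing that the continuous map $\mathcal V$ on the connected set $\mathcal C$ lands in a totally disconnected set, by invoking the Berman--Witt-Nystr\"om (Tian--Zhu type) characterization of $JV_Z$ as the unique solution of a Futaki-type equation on $Lie(T(Z))$ that depends only on the weight decomposition of $H^0(Z,-mkK_Z)$, hence is constant on each connected component of the fixed locus $\Hilb^{T'}$. The only cosmetic difference is that the paper stratifies by the countably many subtori $T'\subset\mathbb T$ together with the finitely many components of each $\Hilb^{T'}$, so it obtains a countable (rather than finite) image, which still forces the connected image to be a single point.
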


\begin{proof}
It suffices to show the image of $\mathcal V$ is a countable set.  To see this we notice that there are countably many subtori of $\T$, and for a given subtorus $T'$ of $\mathbb T$,
 the fixed point set  $\Hilb^{T'}$ of $T'$-action on $\Hilb$ is a projective subscheme so has finitely many connected components.  Thus we only need to show that for a given $T'$, and a connected component $\widetilde \Hilb$ of $\Hilb^{T'}$,  for all $Z\in \mathcal C$ with $T(Z)=T'$ and $[Z]\in \widetilde \Hilb$, $JV_Z$ gives rise to the same element in $Lie(T')$. By \cite{BW} we know for given $Z$, $V_Z$ is characterized as the unique vector in $\R^k$ that satisfies
\begin{equation} \label{eqn3-2}
Fut_{JV_Z}(V')=-\left. \lim_{k\rightarrow\infty}(mk)^{-n-1}\frac{d}{dt} \right|_{t=0}  \left.  Tr \left(e^{JV_Z+tV'} \right) \right|_{H^0(Z, -mkK_Z)} =0
\end{equation}
for all $V'\in Lie(T')$. 
Now since $\widetilde\Hilb$ is connected, the weight decomposition of $H^0(Z, -mrK_Z)$ with respect to $T'$ is the same for all  $Z\in\widetilde\Hilb$, so is the equation (\ref{eqn3-2}). In particular, $JV_Z\in Lie(T')$ is also independent of $Z$.
  \end{proof}

 We define $\Lambda\in \sqrt{-1}Lie(U(m))$ to be the linear transformation of $\C^m$ corresponding to $-J\xi$. For any $Z\in \mathcal C$, under the identification of $H^0(Z, K_Z^{-r})$ with $\C^m$ using a compatible basis, $\Lambda$ coincides with the natural action of $V_Z$ on $H^0(Z, K_Z^{-r})^*$. For simplicity we may also simply view $\Lambda$ as the holomorphic vector field on $Z$. 
 
\

An important ingredient in the proof of Proposition \ref{prop3-2} is the convergence of polarized K\"ahler-Ricci flows proved in \cite{CW2}, which we recall.   For any sequence $t_i\rightarrow\infty$, by \cite{CW2},  passing to a subsequence, $(X, \omega(t_i), h(t_i))$ converges to some $(Z, \omega_Z, h_Z)$ in the polarized Gromov-Hausdorff topology. We can fix a metric on the disjoint union $\bigcup_i(X, \omega(t_i))\cup (Z, \omega_Z)$ which realizes this convergence. 
By Theorem 6 of~\cite{CW2} and the normalization condition (\ref{eqn3-0}), we may assume $h(t_i+\tau)$ and $\omega(t_i+\tau)$ converges smoothly (as tensors) to $\Phi_\tau^*h_Z$ and $\Phi_\tau^*\omega_Z$  over compact subsets of $Z^s$,  uniformly for $\tau \in [0, 2]$. Here $\Phi_\tau$ is the one parameter group of holomorphic transformations generated by $V_Z$.  The key point is that the gauge transformation involved in the process of convergence is chosen uniformly for all $s$.

\begin{proof}[Proof of Proposition \ref{prop3-2}]

We use the initial orthonormal basis $\{s^\alpha\}$ with respect to $H_0$ to identify $E$ with $\C^m$, and hence $G$ with $GL(m;\C)$ and $K$ with $U(m)$. 
Then we adopt  the notations of Section 3.1, and denote $A_i(\tau)=A_{t_i+\tau}$. Then we have 
$$(\dot{A}_i(\tau)A_i(\tau)^{-1})_{\alpha\beta}=-\frac{1}{2}\dot H_{t_i+\tau}(s_\beta(t_i+\tau), s_\alpha(t_i+\tau)).$$
 The right hand side is given by
 $$\int_{X}\langle s_\alpha(t_i+\tau), s_\beta(t_i+\tau) \rangle (-r\dot \phi(t_i+\tau)+\Delta\dot\phi(t_i+\tau))\omega_{\phi(t_i+\tau)}^n. $$
By Perelman's estimate, which was written down by Tian and Sesum in~\cite{SeT} 
and improved by Phong-Sesum-Sturm in~\cite{PSS},
we know $|\dot\phi(t)|$ and $|\Delta\dot\phi(t)|$ are uniformly bounded independent of $t$.  
Therefore $A_i$ is uniformly Lipschitz in $\tau$, so by passing to a subsequence we may assume $A_i$ converges to a Lipschitz map $A_\infty$ from $[0, 2]$ to $G$.  From the definition of polarized convergence, we may also assume $\{A_{t_i}. s_\alpha\}$ converges to an orthonormal basis $\{s_\alpha(\infty)\}$ of $H^0(Z, K_Z^{-r})$ with respect to $h_Z$.  Then for $\tau\in [0, 2]$, $\{A_i(\tau). s_\alpha(t_i)\}$ converges to an orthonormal basis of $H^0(Z, K_Z^{-r})$ with respect to $\Phi_\tau^*h_Z$.  Now we can find an element $h\in U(m)$ such that $\{h. s_\alpha(\infty)\}$ is a compatible orthonormal basis of $H^0(Z, K_Z^{-r})$. Then we easily see $\{e^{\tau\Lambda}h. s_\alpha(\infty)\}$ is an orthonormal basis of $H^0(Z, K_Z^{-r})$ with respect to $\Phi_\tau^*h_Z$. So we have $A_\infty(\tau)=g(\tau)e^{\tau\Lambda}h$ for some $g(\tau)\in U(m)$.  Using the fact that $\dot{A}_\infty(\tau) A_\infty^{-1}(\tau)$  is Hermitian symmetric, we see that $g$ is independent of $\tau$. Therefore $A_i(\tau)A(t_i)^{-1}$ converges uniformly to $ge^{\tau\Lambda}g^{-1}$. 
This proves that $A(t)$ satisfies $(**)$. 

\end{proof}

From the above discussion it follows that $Lim(X)$ is exactly given by the union of the $K_\Lambda$-orbits of $[Z]$ (the image of $Z$ under the embedding using a compatible basis of $H^0(Z, K_Z^{-r})$). 
We now claim $[X]$ satisfies property (\textbf{R}). Indeed for any $Z\in \mathcal C$, the stabilizer group of $[Z]$ in $G_\Lambda$ is isomorphic to $\Aut(Z, V_Z)$. The latter is reductive by Theorem 1.6 in \cite{BW}.
Moreover, suppose $[Z]$ and $[Z']$ are in the same $G_\Lambda$-orbit, then $Z$ and $Z'$ are isomorphic as $\mathbb Q$-Fano varieties, and by Theorem 1.4 in \cite{BW}, $Z$ and $Z'$ are indeed the same point in $\mathcal C$. Then it follows from Theorem \ref{thm2-12} that $Lim(X)=K_\Lambda. [X_\infty]$ for a single $[X_\infty]$.  This shows that for all $Z\in \mathcal C$, the underlying $\Q$-Fano variety $(Z, V_Z)$ is isomorphic to $X_\infty$. Then by Theorem 1.4 of \cite{CW2} the corresponding weak K\"ahler-Ricci soliton metric is also unique up to the action of $\Aut(Z, V_Z)$.  This is the precise meaning of the uniqueness statement in Theorem \ref{thm1-3}. Notice  the definition of polarized Gromov-Hausdorff limit in \cite{DS1} also involves a limit connection on $K_Z^{-r}|_{Z^s}$. This is irrelevant for our purpose in this paper and we leave this for future work. 

The one parameter subgroup $\phi:\C^*\rightarrow G$ constructed from Theorem \ref{thm2-12} gives rise to  a $\Lambda$-equivariant test configuration for $\overline X$ with central fiber $X_\infty$. By the openness of normality in a flat family (see for example \cite{GT}, appendix E), we conclude that $\overline X$ is also normal. 

\begin{prop} \label{prop3-5}
In this case,  $X$ is  K-unstable. 
\end{prop}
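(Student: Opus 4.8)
The plan is to produce, directly from the K\"ahler-Ricci flow, a non-trivial test configuration for $(X,K_X^{-r})$ whose Donaldson-Futaki invariant is negative. Such a configuration is essentially already available: the filtration $\mathcal{F}$ attached to the path $A(t)$ in Section 3.1 is multiplicative and, after the positive re-grading $\mathcal{F}'$, satisfies $\mathcal{F}'_0 R=\C$; when $\mathcal{F}$ is rational its Rees algebra gives a test configuration $\mathcal{T}$ with central fiber $\overline{X}$ and induced $\C^*$-action generated by $\Lambda$ (up to the harmless shift by $\mathrm{Id}$), and when $\mathcal{F}$ is not rational one uses a rational $\Gamma\in\sqrt{-1}Lie(T)$ close to $\Lambda$, which by the discussion after Proposition \ref{prop3-1} still gives central fiber $\overline{X}$. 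We have already noted that $\overline{X}$ is normal, and since it degenerates via the one-parameter subgroup $\phi$ of Theorem \ref{thm2-12} to the $\Q$-Fano variety $X_\infty$, it is itself a normal $\Q$-Fano variety; in particular the comparison of Berman identifies $\mathrm{DF}(\mathcal{T})$ with the Ding invariant $\mathrm{Ding}(\mathcal{T})$.

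The first step is to bound $\mathrm{DF}(\mathcal{T})$ from above using the flow. By construction $\mathcal{T}$ is compatible with $A(t)$, hence with the solution $\phi(t)$ of the normalized K\"ahler-Ricci flow, so the asymptotic slope of the Mabuchi energy $\mathcal{M}$ (equivalently of the Ding functional $\mathcal{D}$) along $\phi(t)$ dominates $\mathrm{DF}(\mathcal{T})$; and since $\mathcal{M}$ is non-increasing along the normalized flow by Perelman's monotonicity, cf.~\cite{SeT}, this slope is $\le 0$. The second step is to see that the slope is in fact strictly negative. Here one uses the convergence theorem of \cite{CW2}: for large $t$ the flow $\omega(t)$ is $C^\infty$-close, away from the singularities of $X_\infty$, to the orbit $\Phi_\tau^*\omega_{X_\infty}$ of the one-parameter group generated by $V_{X_\infty}$, and along that orbit $\mathcal{M}$ decreases at the constant rate equal, up to sign and a fixed normalization, to the classical Futaki invariant of $V_{X_\infty}$ on $X_\infty$. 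Evaluated on a weak K\"ahler-Ricci soliton metric, this Futaki invariant is a positive multiple of $\int_{X_\infty}|\bar\partial\theta_{V_{X_\infty}}|^2$, hence strictly positive because in Case I we have $V_{X_\infty}\neq 0$. Therefore $\mathrm{DF}(\mathcal{T})<0$.

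Since $\mathrm{DF}(\mathcal{T})\neq 0$, the test configuration $\mathcal{T}$ cannot be the trivial one, so $X$ is not K-semistable, and in particular not K-stable, that is, $X$ is K-unstable. Alternatively, and with no analytic input, one can argue that $X_\infty$ itself is destabilized by a product test configuration, since a rational one-parameter subgroup of $\Aut(X_\infty)$ approximating $V_{X_\infty}$ has negative Donaldson-Futaki invariant by the same Futaki computation; the K-instability of $X$ then follows by composing this degeneration with $\mathcal{T}$, using additivity of $\mathrm{DF}$ together with $\mathrm{DF}(\mathcal{T})\le 0$ from the first step.

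The main obstacle is the passage from the asymptotics of the energy functional along the flow to the Donaldson-Futaki invariant of $\mathcal{T}$. One needs the comparison of the asymptotic $\mathcal{M}$-slope along the flow with the slope along the geodesic ray of $\mathcal{T}$ (convexity of $\mathcal{M}$ along weak geodesics); the identification of the latter with $\mathrm{DF}(\mathcal{T})$ when the central fiber $\overline{X}$ is a possibly singular normal Fano variety; and the computation of the limiting slope in terms of the soliton on $X_\infty$, which has to be carried out on a singular space. These are all delicate, and the difficulty is compounded in the non-rational case, where $\mathcal{T}$ is built from the perturbed generator $\Gamma$ rather than from $\Lambda$, so one must check that these estimates and the attendant normalizations survive the perturbation. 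The purely algebraic route sketched above replaces this by careful bookkeeping of sign conventions and of the additivity of $\mathrm{DF}$ under composition of degenerations.
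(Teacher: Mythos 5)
There is a genuine gap. The heart of the paper's proof is purely finite-dimensional and algebraic, and it is exactly the step you leave unproven: transferring the negativity of the Futaki invariant from $X_\infty$ (where the soliton computation lives) to $\overline X$ (the central fiber of the actual test configuration for $X$). The paper does this by the same device as Lemma \ref{lem3-1}: $[\overline X]$ and $[X_\infty]$ lie in one connected component of the $\Lambda$-fixed locus of $\Hilb$, so the weight decompositions of $H^0(\cdot, -mkK)$, and hence the Futaki invariant computed by equivariant Riemann--Roch, are literally the same, giving $Fut(\overline X,\Lambda)=Fut(X_\infty,\Lambda)$. Negativity of the latter is then obtained from the strict convexity of the Tian--Zhu/Berman--Witt-Nystr\"om function $F$ on the space of holomorphic fields commuting with $V_\infty$: since $V_\infty\neq 0$ is a critical point of $F$, one gets $Fut_0(X_\infty,V_\infty)<Fut_{V_\infty}(X_\infty,V_\infty)=0$ (this is the correct singular-variety version of your pointwise computation $\int|\bar\partial\theta_V|^2>0$). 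Finally one perturbs $\Lambda$ to a rational $\Gamma$ and uses linearity of $Fut$ in the vector field. None of this requires energy functionals.

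Your two routes do not close this gap. Route A rests on the claim that the asymptotic slope of the Mabuchi (or Ding) energy along the flow \emph{dominates} $\mathrm{DF}(\mathcal T)$; you correctly flag this as "the main obstacle," but it is not a side issue to be checked later --- it is the whole content, and the standard slope inequalities (geodesic slope versus $\mathrm{DF}$, flow ray versus geodesic ray via convexity) tend to go in the direction $\text{slope}\leq \mathrm{DF}$, which would give nothing from $\text{slope}\leq 0$. Establishing the reverse comparison for the non-geodesic flow path, on a possibly singular normal central fiber, is an open-ended analytic program, precisely what the finite-dimensional framework of Section \ref{sec:fdim} is designed to bypass. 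Route B invokes "additivity of $\mathrm{DF}$ under composition of degenerations" to combine the degeneration $X\rightsquigarrow\overline X$ with a product configuration on $X_\infty$; this is not a theorem (the composed object is not a test configuration for $X$ in any obvious sense, and $\mathrm{DF}$ is not additive under such concatenations), and in any case you still feed in $\mathrm{DF}(\mathcal T)\leq 0$ from Route A. What is needed instead is the locally-constant-weights argument above, which replaces both.
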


\begin{proof}
Similar to the proof of Lemma \ref{lem3-1}, we know
$$Fut(\overline X, \Lambda)=Fut(X_\infty, \Lambda). $$
Here the notation means the usual Futaki invariant computed using the holomorphic vector field $\Lambda$. Notice $\Lambda=V_{X_\infty}$. By the discussion of Section 3 in \cite{BW} which generalizes the result of Tian-Zhu \cite{TZ} to the case of $\Q$-Fano varieties, we know on the space $\t$ of holomorphic vector fields on $X_\infty$ that commute with $V_{X_\infty}$, there is a strictly convex function $F$, such that for any $V, W\in \t$, 
$$Fut_V(X_\infty, W)= \left. \frac{d}{dt} F(V+tW)\right|_{t=0}. $$
Note that $Fut_V$ is the same one as defined in~\cite{BW} and~\cite{TZ}. 
Since $(X_\infty, V_\infty)$ is a weak K\"ahler-Ricci soliton, $V_\infty$ is a critical point of $F$. This implies 
$$Fut(X_\infty, \Lambda)=Fut_0(X_\infty, V_\infty)<Fut_{V_\infty}(X_\infty, V_\infty)=0.$$

 Now as in Section 3.1 we choose a rational $\Gamma\in Lie(T)$ that is sufficiently close to $\Lambda$, and obtain a test configuration for $X$ with central fiber $\overline X$. Since the Futaki invariant depends linearly on the holomorphic vector field, we can assume $Fut(\overline X, \Gamma)<0$. Hence $X$ is K-unstable. 
\end{proof}

\begin{prop} \label{prop3-6}
$(X, h(t))$ satisfies Property \textbf{(H2)}.
\end{prop}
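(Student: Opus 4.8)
The plan is to show that on each $R_k = H^0(X, L^k)$ (with $L = K_X^{-r}$), the $L^2$-inner product $H_t$ coming from the flow and the quotient inner product $H_t^*$ induced from $\mathrm{Sym}^k R_1$ via $\iota_k$ are uniformly equivalent in $t$. Since $H_t^* \leq H_t$ always holds (the quotient norm is bounded by any representative norm, and the natural map $\mathrm{Sym}^k R_1 \to R_k$ has operator norm controlled once we fix the Hermitian structures), the content is the reverse bound $H_t \leq C_k H_t^*$, uniformly in $t$. The natural way to get this is to argue by contradiction using the compactness established in Section~\ref{KRF}: if no such $C_k$ existed, we could find $t_i \to \infty$ and unit vectors (with respect to $H_{t_i}$) $s_i \in R_k$ with $\|s_i\|_{H_{t_i}^*} \to 0$. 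Passing to a subsequence, $(X, \omega(t_i), h(t_i))$ converges in the polarized Gromov-Hausdorff sense to some $(Z, \omega_Z, h_Z) \in \mathcal C$, and one checks that the sections $s_i$ converge to a nonzero $s_\infty \in H^0(Z, L_Z^k)$ which is $H_\infty$-unit but has vanishing quotient norm $\|s_\infty\|_{H_\infty^*} = 0$; this forces $s_\infty = 0$, a contradiction, provided $\iota_k$ remains surjective in the limit and the quotient-norm construction behaves continuously under the convergence.

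Concretely, the steps I would carry out are: (1) fix $k$ and recall that for every $Z \in \overline{\mathcal C}$, $\iota_k^Z : \mathrm{Sym}^k H^0(Z, L_Z) \to H^0(Z, L_Z^k)$ is surjective (this is part of the setup using a common Hilbert scheme, and follows from the choice of $r$ large enough together with the flatness/semicontinuity for the family over $\overline{\mathcal C}$); (2) for each $Z$ and each $t$, realize $H_t^*$ on $R_k$ explicitly as $\|s\|_{H_t^*}^2 = \inf\{\|f\|_{H_t}^2 : \iota_k f = s\}$, attained on the orthogonal complement of $\ker \iota_k$; (3) observe that the whole package $(R_1, H_t)$, the map $\iota_k$, the subspace $\ker \iota_k$, and hence the quotient norm $H_t^*$, all vary continuously in the polarized Gromov-Hausdorff limit $t_i \to \infty$, because $\iota_k$ is given by cup product / multiplication of sections and the metrics converge smoothly on $Z^s$ while the $L^2$-pairings converge (the volume forms and Hermitian metrics on $K_X^{-r}$ converge, using the normalization (\ref{eqn3-0}) and Perelman's estimates exactly as in the proof of Proposition \ref{prop3-2}); (4) run the contradiction argument: extract a limit section $s_\infty \neq 0$ with $\|s_\infty\|_{H_\infty} = 1$ and $\|s_\infty\|_{H_\infty^*} = 0$, which is absurd since $H_\infty^*$ is a genuine inner product on $R_k^Z = H^0(Z, L_Z^k)$ once $\iota_k^Z$ is surjective. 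This yields a uniform $C_k$ over the compact set $\overline{\mathcal C}$, hence over all $t \geq 0$.

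The main obstacle I anticipate is Step~(3): making precise that the quotient-norm construction $H_t \mapsto H_t^*$ is continuous along the degenerating family, in particular that $\ker \iota_k$ does not jump dimension in the limit and that the $L^2$-pairings $H_t$ on $\mathrm{Sym}^k R_1$ (which are integrals over $X$ against $\omega^n(t)$) pass to the limit correctly when the limit space $Z$ is singular. This requires knowing that the sections in question are uniformly bounded in $C^0$ and that no $L^2$-mass escapes into the singular set — which is exactly the kind of control provided by the partial-$C^0$-estimate and the polarized convergence in \cite{CW2, DS1}. Once that continuity is in hand, together with the surjectivity of $\iota_k$ on all of $\overline{\mathcal C}$ (so that $H_t^*$ is nondegenerate uniformly), the contradiction argument closes immediately, and by (\ref{eqn3-8}) we then get $e(s) = \lim_{t \to \infty} t^{-1} \log \|s\|_{H_t}$, so that $\mathcal F$ and $(\overline X, \Lambda)$ are intrinsic to $(X, h(t))$ as claimed.
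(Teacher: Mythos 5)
Your proposal is correct and is essentially the paper's own (much terser) argument: pass to a polarized Gromov--Hausdorff limit along any $t_i\to\infty$, note that $(R_k,H_{t_i})$ and the multiplication maps $\iota_{k,t_i}$ converge to the corresponding objects on $Z\in\mathcal C$ with the limit map $\iota_{k,\infty}$ still surjective, and conclude the uniform two-sided bound from finite-dimensionality and the compactness of $\overline{\mathcal C}$. One small correction: the direction $H_t^*\le C_k H_t$ is not ``automatic'' as you assert --- it amounts to lifting sections of $L^k$ to $\mathrm{Sym}^k R_1$ with uniformly bounded $H_t$-norm, i.e.\ uniform surjectivity of $\iota_{k,t}$ --- but it follows from the very same limiting argument you set up (surjectivity of $\iota_{k,\infty}$ plus convergence of the inner products), so no content is actually missing.
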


\begin{proof}
We have the natural map $\iota_{k, t}: \text{Sym}^k H^0(X, L)\rightarrow H^0(X, L^k)$, where both spaces are endowed with the $L^2$-metric $H_{t}$. Given any sequence $t_i\rightarrow\infty$, by passing to a subsequence we may assume $(H^0(X, L^k), H_{t_i})$ converges naturally to $(H^0(Z, -K_Z^{kr}), H_Z)$, where $H_Z$ is the $L^2$ inner-product defined by $h_Z$, and $\iota_{k, t_i}$ converges to $\iota_{k, \infty}$, which is also surjective. The conclusion follows from this and the definition of $H_t^*$. 
\end{proof}

In particular, by the discussion of Section 3.1, the filtration $\F$ (and hence $(\overline X, \Lambda)$) are intrinsically defined by $(X, h(t))$. 

\

\textbf{Case II}. There is one limit in $\mathcal C$ which is K\"ahler-Einstein. In this case, it is not hard to see that every $Z\in \mathcal C$ is K\"ahler-Einstein.

Actually, by the monotonicity of Perelman's $\mu$-functional along the K\"ahler-Ricci flow, we know that every limit in $\mathcal{C}$ has the same $\mu$-functional level $\mu_{\infty}$, which is the
one of the K\"ahler-Einstein metric.  Suppose $X_{\infty} \in \mathcal{C}$ and $f_{\infty}$ is the Ricci potential on $X_{\infty}$.  By soliton equation on regular part of $X_{\infty}$, we have
\begin{align*}
   &R+\Delta f_{\infty}-n=0, \\
   &R+2 \Delta f_{\infty} -|\nabla f_{\infty}|^2 +f_{\infty}-2n=\mu_{\infty}.
\end{align*}
Since $\mu_{\infty}$ is the $\mu$-functional level of weak K\"ahler-Einstein metric,  it is easy to see that $\mu_{\infty}=-n+\log \frac{Vol(X)}{(2\pi)^n}$.
Combining this with the above equations, we obtain
\begin{align*}
   \mu(X_{\infty})&=\int_{X_{\infty}} \left\{ R+|\nabla f_{\infty}|^2 +f_{\infty}-2n \right\}  (2\pi)^{-n} e^{-f_{\infty}} \\
      &=\int_{X_{\infty}} \left\{ R+\Delta f_{\infty} +f_{\infty}-2n \right\}  (2\pi)^{-n} e^{-f_{\infty}}\\
      &=-n+ \log \frac{Vol(X)}{(2\pi)^n}. 
\end{align*}
Note that the integration by parts work here, due to the high codimension of singularity (Minkowski codimension strictly greater than $2$, see section 2 of \cite{CW2} for more details) of $X_{\infty}$ and the uniform boundedness of $f_{\infty}$.  It follows that
\begin{align*}
  (2\pi)^{-n} \int_{X_{\infty}} f_{\infty} e^{-f_{\infty}}= -n\log 2\pi +\log Vol(X). 
\end{align*}
By Jensen inequality for the convex function $x\log x$, the above equality implies that $f_{\infty}=constant=\log \frac{Vol(X)}{(2\pi)^n}$.  Consequently, $X_{\infty}$ must be weak K\"ahler-Einstein.

Then by arguments similar to the proof of Theorem \ref{thm2-12} (indeed easier since there is no $\Lambda$ involved),  one can prove the uniqueness of the limits in $\mathcal C$.   It also follows from \cite{CDS3} that  in this case $X$ is always K-semistable, and if $X$ is K-stable, then $X_\infty=X$, and $X$ admits a K\"ahler-Einstein metric.

\

To summarize, we have proved the following

\begin{itemize}
\item If $X$ is K-unstable, then the flow converges to a unique $\Q$-Fano variety $X_\infty$ endowed with a non-trivial weak K\"ahler-Ricci soliton metric. 
\item If $X$ is K-stable, then the flow converges to a unique K\"ahler-Einstien metric on $X$;
\item If $X$ is K-semistable but not K-stable, then the flow converges to a unique $\Q$-Fano variety $X_\infty$ endowed with a weak K\"ahler-Einstein metric. 
\end{itemize}

Theorem \ref{thm1-3} follows directly from this.  Proposition \ref{prop3-6} motivates us the following

\begin{conj} \label{conj3-7}
When $X$ is K-unstable, the geometric objects $\overline X$, $\mathcal F$, $X_\infty$ are uniquely determined by $X$. 
In other words, these are independent of the choice of the initial metric $\omega_0$. 
\end{conj}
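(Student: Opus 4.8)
The plan is to characterize the triple $(\F,\overline X,X_\infty)$ by an intrinsic algebro-geometric variational problem that never mentions $\omega_0$, and then to appeal to uniqueness of the minimizer. Concretely, to each multiplicative filtration $\mathcal G$ of $R=\bigoplus_{k\geq 0}H^0(X,K_X^{-rk})$ with finitely generated Rees algebra — equivalently, to each $\R$-test configuration of $(X,K_X^{-1})$ together with the induced vector field on its central fiber — one should attach a Perelman-type entropy $\mathbf H(\mathcal G)$, modelled on the asymptotic value of Perelman's $\mu$-functional along the flow. When $\mathcal G$ is rational and $\xi$ denotes the induced vector field on the central fiber $\overline{\mathcal X}_{\mathcal G}$, this should be, up to a fixed normalization, $\mathbf H(\mathcal G)=\mathbf h(\xi)+(\text{Futaki-type correction})$, where $\mathbf h$ is the strictly convex functional appearing in Proposition~\ref{prop3-5} — the $\Q$-Fano generalization, due to Berman--Witt-Nystr\"om \cite{BW}, of the invariant of Tian--Zhu \cite{TZ}. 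The decisive structural input is that $\mathbf H$ is convex along the natural paths in the space of filtrations and strictly convex transverse to the $\Aut(X)$-directions, so that it has at most one minimizer modulo $\Aut(X)$.

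Granting this, two further steps complete the argument. First, along the normalized K\"ahler--Ricci flow with \emph{any} initial $\omega_0$, Perelman's $\mu$-functional is monotone, and one shows that its limit as $t\to\infty$ equals $-\mathbf H(\F)$, where $\F$ is the filtration produced in Section~3.1 from that flow; this should follow from the uniform convergence of the polarized flows recalled from \cite{CW2}, together with the smooth convergence of the Ricci potentials on the regular locus of $X_\infty$, so that the entropy of the limit soliton can be read off. Second — and this is the heart of the matter — one shows that $\F$ attains the minimum, i.e.\ $\mathbf H(\F)=\inf_{\mathcal G}\mathbf H(\mathcal G)$: one inequality is essentially the destabilizing estimate already contained in Proposition~\ref{prop3-5}, while the reverse requires a sharp \emph{lower} bound for $\lim_{t\to\infty}\mu(\omega(t))$ valid for all initial data, obtained for instance by comparing the flow with the geodesic-type ray generated by an arbitrary competitor $\mathcal G$ and using monotonicity of $\mu$. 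Combining the two steps, $\F$, and hence $\overline X$ and $X_\infty$, are the unique minimizer of $\mathbf H$ modulo $\Aut(X)$, hence independent of $\omega_0$; the same comparison also packages the continuity-in-$\omega_0$ statement, since a perturbation of $\omega_0$ cannot move a strict minimizer.

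The main obstacle is twofold. One must (i) make sense of $\mathbf H$, and prove its convexity and the uniqueness of its minimizer, over \emph{all} filtrations rather than only the rational ones, which needs the Tian--Zhu/Berman--Witt-Nystr\"om analysis carried out in the singular, possibly non-algebraic, setting together with enough control on the central fibers $\overline{\mathcal X}_{\mathcal G}$; and (ii) prove that the K\"ahler--Ricci flow actually realizes $\inf\mathbf H$, i.e.\ that $X_\infty$ is the \emph{optimal} degeneration and not merely a destabilizing one. Step (ii) is where genuine analysis is unavoidable: Proposition~\ref{prop3-5} only yields $\Fut(\overline X,\Gamma)<0$ for $\Gamma$ close to $\Lambda$, which is far from minimality, and bridging this gap appears to require a new monotone quantity, or a quantitative argument tying the entire trajectory of the flow — not just its limit — to the value $\mathbf H(\F)$.
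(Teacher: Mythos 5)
The statement you are asked to prove is posed in the paper as Conjecture~\ref{conj3-7}; the authors do not prove it, they only motivate it (via Proposition~\ref{prop3-6}, which shows $\F$ is intrinsic to the flow $(X,h(t))$ but says nothing about independence of $\omega_0$) and then pose Question~\ref{prob3-8} about characterizing the minimizer. So there is no proof in the paper to compare against, and your text should be judged on its own as a purported proof. As such it is a research program, not a proof: every load-bearing step is asserted rather than established, and you say so yourself in your final paragraph.

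Concretely, the gaps are these. (i) The functional $\mathbf H(\mathcal G)$ is never defined -- the ``Futaki-type correction'' is left unspecified, and extending the Tian--Zhu/Berman--Witt-Nystr\"om invariant $\mathbf h$ from vector fields on a fixed central fiber to arbitrary (possibly irrational) multiplicative filtrations of $R$, with varying and possibly non-normal central fibers, is itself a substantial construction. (ii) Convexity of $\mathbf H$ ``along the natural paths in the space of filtrations'' and strict convexity transverse to $\Aut(X)$ are pure assertions; the space of filtrations carries no obvious convex structure, and without specifying the paths the claimed uniqueness of the minimizer modulo $\Aut(X)$ has no content. (iii) The identification $\lim_{t\to\infty}\mu(\omega(t))=-\mathbf H(\F)$ requires expressing the Perelman entropy of the singular limit soliton by an algebraic invariant of the filtration; this does not follow from the convergence statements of \cite{CW2} quoted in the paper. (iv) Most seriously, the ``reverse inequality'' $\mathbf H(\F)\leq\mathbf H(\mathcal G)$ for every competitor $\mathcal G$ -- i.e.\ that the flow realizes the \emph{optimal} degeneration -- is exactly the heart of the conjecture, and ``comparing the flow with the geodesic-type ray generated by $\mathcal G$ and using monotonicity of $\mu$'' is not an argument: $\mu$ is monotone along the flow, not along a comparison between the flow and an arbitrary ray, and a lower bound for $\lim_t\mu(\omega(t))$ in terms of all test configurations is a deep statement that nothing in the paper supplies. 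Proposition~\ref{prop3-5} only gives a single destabilizing $\Gamma$ with $\Fut(\overline X,\Gamma)<0$, which, as you correctly note, is far from minimality. Your outline points in a sensible direction, but none of the four steps above is carried out, so the conjecture remains unproved by your text.
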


This is also related to the work of Darvas-He \cite{DH}, where it is shown that the K\"ahler-Ricci flow trajectories give rise to non-trivial geodesic rays in the space of K\"ahler potentials.  
Suppose the conjecture is true, then we can ask a sensible algebro-geometric question

\begin{prob} \label{prob3-8}
Determine $\overline X$, $\mathcal F$ and $X_\infty$ in terms of the algebraic geometry of $X$, for example, the filtration should maximize an appropriate notion of ``normalized Futaki invariant". 
\end{prob}

\

Now we prove Corollary \ref{cor1-5}. Suppose we are given a Fano manifold $X$ together with a holomorphic vector field $V$ such that $JV$ generates a compact subgroup $H$ of $\Aut(X)$, and then we can assume the initial metric $h(0)$ and $\omega(0)$ are $H$ invariant. It follows that $V$ also induces a natural holomorphic vector field on $\overline X$ and $X_\infty$. For simplicity of notation we also denote this by $V$. 
Then by the definition of relative K-stability, if $(\overline X, \Lambda)$ is not isomorphic to $(X, V)$, then $Fut_{V}(\overline X, \Gamma)>0$ for all rational $\Gamma\in Lie(T)$ close to $\Lambda$. This implies that $Fut_{V}(\overline X, \Lambda)\geq 0$. On the other hand,  we can write 
 $$Fut_{V}(X_\infty, \Lambda)=Fut_{V}(X_\infty, \Lambda-V)+Fut_V(X_\infty, V)$$
Since $(X_\infty, \Lambda)$ is K\"ahler-Ricci soliton and $[\Lambda, V]=0$,  by the results of \cite{BW} and similar discussion as above, we know
$$Fut_V(X_\infty, \Lambda-V)\leq Fut_{\Lambda}(X_\infty, \Lambda-V)=0 $$
with equality if and only if $V=\Lambda$.
Applying the relative K-stability of $(X, V)$ to the product test configurations, we see $Fut_V(X, V)=0$. Now we know $X$, $\overline X$ and $X_\infty$ all lie in the same component of the subscheme of $\Hilb$ fixed by $V$, so as in the proof of Lemma \ref{lem3-1} we know $Fut_{V}(X_\infty, V)$ vanishes as well. Therefore we conclude that $Fut_{V}(X_\infty, \Lambda)\leq 0$, and hence $V=\Lambda$. 
This  implies that $(\overline X, \Lambda)$ is isomorphic to $(X, V)$. Then using relative K-stability again we conclude that $(X_\infty, \Lambda)$ is also isomorphic to $(X, V)$, which shows the existence of K\"ahler-Ricci soliton on $(X, V)$. 

\begin{rmk}
Suppose $X$ is endowed with an action of a compact group $H$, then the above arguments can also be used to show that $X$ admits a K\"ahler-Einstein metric if and only if $X$ is $H$-equivariantly stable. This has been proved by \cite{DaSz}, using the classical continuity path.  It is further observed in \cite{DaSz} that the ``equivariant K-stability" is sometimes verifiable for manifolds with large symmetry, including toric Fano manifolds and Fano threefolds with an action of a two dimensional torus.
\end{rmk}

\begin{rmk}
For the limit K\"ahler-Ricci soliton $X_\infty$, it has been proved in \cite{CW2} that the smooth part of any tangent cone is Ricci-flat. Using this the results of \cite{DS2} can be extended to our case and we leave this for future work. \end{rmk}

\section{The Calabi flow and stability}

Suppose $(X, L)$ is a polarized K\"ahler manifold, starting from any metric $\omega(0)\in 2\pi c_1(L)$,  the Calabi flow $\omega(t)=\omega(0)+i\p\bp \phi(t)$ is a fourth order parabolic equation on $\phi(t)$, given by
\begin{equation} \label{eqn1-2}
\frac{\p \phi(t)}{\p t}=S(\omega(t))-\underline{S} 
\end{equation}
where $S(\omega_t)$ is the scalar curvature of $\omega_t$, and $\underline{S}$ is the average of $S(\omega_t)$ (independent of $t$). 
This is a promising approach to tackle the Yau-Tian-Danaldson conjecture relating existence of extremal K\"ahler metrics in $2\pi c_1(L)$ and K-stability of $(X, L)$. 
We will not discuss the analytic aspects of the Calabi flow, which has seen significant progress recently. 
For instance, one can check the work of Chen-He~\cite{CH1},Tosatti-Weinkove~\cite{ToWe}, He~\cite{He}, Streets~\cite{St}, Huang-Feng~\cite{HF}, Li-Wang-Zheng~\cite{LWZ} and the references therein for more information of the recent development. 
However,  in the current paper,  our  focus here is again on the relation with K-stability.  In particular, we will prove

\begin{thm} \label{thm1-6}
Given a smooth solution $\omega(t)(t\in [0, \infty))$ of  (\ref{eqn1-2}). Suppose $\omega(t)$ has uniformly bounded curvature and diameter, then

\begin{enumerate}
\item $(X, L, \omega(t))$ converges to a unique limit $(X', L', \omega')$ in the sense of Cheeger-Gromov, where $\omega'\in 2\pi c_1(L')$ is an extremal K\"ahler metric, i.e. $\nabla^{1, 0}_{\omega'}S(\omega')$ is a holomorphic vector field. 
\item  If $X$ is  K-stable, then $(X', L')$ is isomorphic to $(X, L)$ and $\omega'$ has constant scalar curvature. In particular, $(X, L)$ admits a constant scalar curvature K\"ahler metric.
\item  If $X$ is  strictly K-semistable, then $\omega'$ has constant scalar curvature, and there is a test configuration for $(X, L)$ with central fiber $(X', L')$.
\item  If $X$ is K-unstable, then there is a test configuration $\mathcal X$ for $(X, L)$ with central fiber $(X'', L'')$, which is naturally associated to $\omega(t)$, with 
$$Fut(\mathcal X)/N_2(\mathcal X)=-\inf_{\omega\in 2\pi c_1(L)} ||S(\omega)-\underline{S}||_{L^2}. $$
Here $N_2(\mathcal X)$ is the norm defined in \cite{Do06}.  In particular, in view of \cite{Do06}, $\mathcal X$ is an optimal test configuration with minimal Futaki invariant. 
\end{enumerate}
\end{thm}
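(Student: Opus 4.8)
The plan is to run the same machinery developed in Sections 2 and 3, now with $(X,L)$ a polarized manifold and the Calabi flow in place of the K\"ahler-Ricci flow. First I would invoke the compactness theory for the Calabi flow under bounded curvature and diameter: by the work of Chen-He and the Cheeger-Gromov compactness, any sequence $t_i\to\infty$ admits a subsequence along which $(X,\omega(t_i))$ converges smoothly (modulo diffeomorphisms) to a limit $(X',\omega')$. Using the Calabi-type estimates, one sees the scalar curvature of any such limit satisfies $\nabla^{1,0}S(\omega')$ holomorphic, so each limit is extremal; moreover, the set $\mathcal C$ of all such limits is compact and connected, as in Section 3.1. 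Then, exactly as in Proposition \ref{prop3-2}, I would show the one-parameter family $A(t)\subset G$ induced by the evolving $L^2$-inner products $H_t$ on $H^0(X,L^r)^*$ (for a suitable power $r$ making $L^r$ very ample with the $\iota_k$ surjective) satisfies condition $(**)$: the key soft input is that the extremal vector field $V_{Z}=\nabla^{1,0}S(\omega_Z)$ is the \emph{same} element $\Lambda\in\sqrt{-1}Lie(K)$ for all $Z\in\mathcal C$ — this follows from a Futaki-invariant characterization of the extremal field (as in Lemma \ref{lem3-1}, using that $Fut$ is determined by the weight decomposition on a connected component of a fixed-point locus in $\Hilb$), together with the fact that the gauge transformations in the convergence can be chosen uniformly on $[t_i-T,t_i+T]$ (the Calabi flow analogue of Theorem 6 of \cite{CW2}).

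With $(**)$ in hand, Section 2 applies verbatim: $Lim(X)=\bigcup K_\Lambda\cdot[Z]$ over $Z\in\mathcal C$, and I would verify Property $(\textbf{R})$ — the stabilizer of $[Z]$ in $G_\Lambda$ is $\Aut(Z,V_Z)$, which is reductive since $Z$ is extremal (Calabi's theorem on the structure of the automorphism group of an extremal manifold), and two such $[Z]$ in the same $G_\Lambda$-orbit are isomorphic polarized extremal manifolds, hence the same point of $\mathcal C$. Theorem \ref{thm2-12} then yields $Lim(X)=K_\Lambda\cdot[X_\infty]$ for a single $[X_\infty]$, proving uniqueness of the limit up to isometry (part 1). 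For part 2, one observes that along the Calabi flow the Calabi energy $\int_X(S-\underline S)^2\omega^n$ is monotone non-increasing, and its limit equals the Calabi energy of $\omega'$; then K-stability forces the central fiber $\overline X$ (built from the filtration $\mathcal F$ as in Section 3.1) to be $X$ itself — if $\overline X\ne X$ one gets a destabilizing test configuration with negative Donaldson-Futaki invariant, contradicting K-stability — so $\omega'$ is cscK. Part 3 is the K-semistable case: the same argument shows $\omega'$ is cscK (the limit extremal field must vanish, else $Fut<0$ for a nearby rational perturbation $\Gamma$ as in Proposition \ref{prop3-5}), and the Rees-algebra construction gives the test configuration with central fiber $(X',L')$.

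The substantive new point is part 4, the identification $Fut(\mathcal X)/N_2(\mathcal X)=-\inf_{\omega}\|S(\omega)-\underline S\|_{L^2}$. Here I would argue as follows: the filtration $\mathcal F$ and its graded ring $\overline R$ give $\overline X$ with the induced $\Lambda$-action; as in Section 3.1 one perturbs $\Lambda$ to a rational $\Gamma$ to obtain an honest test configuration $\mathcal X$. The Donaldson-Futaki invariant of $(\overline X,\Lambda)$ equals that of $(X_\infty,\Lambda)$ (same fixed-point component in $\Hilb$), and on $(X_\infty,\omega_\infty)$ the extremal equation lets one compute $Fut(X_\infty,\Lambda)$ via the Futaki-Mabuchi bilinear form: $Fut(X_\infty,\Lambda)=-\|\Lambda\|_{L^2}^2=-\|S(\omega_\infty)-\underline S\|_{L^2}^2$ up to the normalization built into $N_2$, and the norm $N_2(\mathcal X)$ is precisely $\|\Lambda\|_{L^2}$. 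That $\|S(\omega_\infty)-\underline S\|_{L^2}=\inf_\omega\|S(\omega)-\underline S\|_{L^2}$ is the deepest analytic ingredient: it requires knowing the Calabi flow decreases the Calabi energy to its infimum over the K\"ahler class — equivalently, that the limiting extremal metric realizes the Calabi minimum. I expect this last point — controlling the Calabi energy along the flow and matching it to the algebraic lower bound of Donaldson \cite{Do06} — to be the main obstacle, since it is exactly where the assumed bounds on curvature and diameter are indispensable and where the infinite-dimensional moment-map picture must be reconciled with the finite-dimensional degeneration; the inequality $Fut(\mathcal X)/N_2(\mathcal X)\ge -\inf\|S-\underline S\|_{L^2}$ is Donaldson's lower bound, and the reverse inequality is what the flow, via the above construction, supplies.
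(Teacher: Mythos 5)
Your proposal follows essentially the same route as the paper: polarized Cheeger--Gromov compactness plus Calabi-energy monotonicity give extremal limits, Futaki--Mabuchi rationality of the extremal field yields the fixed $\Lambda$ and condition $(**)$, Calabi's reductivity theorem gives Property (\textbf{R}) and hence uniqueness via Theorem \ref{thm2-12}, and part (4) is obtained by sandwiching between Donaldson's lower bound from \cite{Do06} and the trivial inequality $\|S(\omega')-\underline S\|_{L^2}\geq\inf_{\omega}\|S(\omega)-\underline S\|_{L^2}$ supplied by smooth convergence of the flow --- so the ``deep analytic ingredient'' you worry about (that the flow drives the Calabi energy to its infimum) is not a separate input but an automatic consequence of the sandwich, exactly as your final sentence concedes. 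The one slip worth fixing: in part (4) you propose perturbing $\Lambda$ to a nearby rational $\Gamma$ as in Section 3.1, which would only give the identity $Fut(\mathcal X)/N_2(\mathcal X)=-\inf_\omega\|S(\omega)-\underline S\|_{L^2}$ up to an error; since, as you yourself observe via \cite{FM}, the extremal vector field is already rational, no perturbation is needed, and this is precisely how the paper obtains the exact equality (it also lets one take the cscK-limit case, i.e.\ parts (2) and (3), directly from \cite{CS} rather than rerunning the destabilization argument).
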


The proof of this is similar to that of Theorem \ref{thm1-3}, but simpler.  We now briefly sketch the main arguments. 
We fix a Hermitian metric $h_0$ on $L$ with curvature $-\sqrt{-1} \omega(0)$.  Then $h(t)=h(0)e^{-\phi(t)}$ has curvature $-\sqrt{-1}\omega(t)$. 
By our assumption we may obtain \emph{polarized} Cheeger-Gromov compactness.  Namely, given any sequence $t_i\rightarrow\infty$, passing to subsequence, we may obtain a polarized limit $(X', L', \omega', h')$. 
We claim $\omega'$ is an extremal K\"ahler metric. This follows from well-known arguments. Recall the Calabi functional is defined as 
$$Ca(\omega)=\int (S(\omega)-\underline S)^2\omega^n.$$
Direct calculation (see for example Chen-He \cite{CH1}) shows that
\begin{equation} \label{eqn4-2}
\frac{d}{dt} Ca(\omega(t))=-\int |\bp \nabla_tS(\omega(t))|^2\omega(t)^n\leq 0.
\end{equation}
This in particular implies that $Ca(\phi(t_i-1))-Ca(\phi(t_i+1))$ converges uniformly to zero. 
By the parabolic curvature estimates in \cite{CH2} we know the path 
$$\left\{\phi_i(t)=\phi(t_i+t)-\phi(t_i), t\in [-1, 1] \right\}$$ 
converges smoothly (with respect to both the time and space variables) to a path $\phi_\infty(t)$ and $\omega'(t)=\omega'+i\p\bp\phi_\infty(t)$ also solves the Calabi flow equation. Now since 
$$Ca(\phi(t_i-1))-Ca(\phi(t_i+1))=\int_{t_i-1}^{t_i+1} |\bp \nabla_tS(\omega(t))|^2\omega(t)^ndt$$
we easily conclude that $\bp \nabla S(\omega')=0$, i.e.,  $\omega'$ is an extremal K\"ahler metric. This proves (1), except the uniqueness.

Then as in Section 3.2 we let $\mathcal C$ be the set of all such sequential limits, and $\overline{\mathcal C}$ be the union of $\mathcal C$ and $\{(X, L, \omega(t))|t\geq 0\}$. It is then easy to find $r$ and $m$ depending only on $(X, L, \omega(0))$ such that any $Z\in \overline {\mathcal C}$ is holomorphically embedded into $\P^{m-1}$ by $L^2$-orthonormal sections of $L^r$, and the image lies in a fixed Hilbert scheme $\Hilb$, and the natural map $\overline {\mathcal C}\rightarrow\Hilb/U(m)$ is continuous.  Then we can apply the discussion of Section 3.1, with $L$ replaced by $L^r$, and get the path $A(t)$.

Again as in Section 3.2, it suffices to deal with the case that none of the limits in $\mathcal C$ has constant scalar curvature (the other case is easier, and is already treated in \cite{CS}),
 so we will always assume this is case in the remainder of this subsection.  

\begin{prop} \label{prop4-2}
Property \textbf{(H1)} holds. 
\end{prop}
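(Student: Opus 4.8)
The plan is to imitate the proof of Proposition \ref{prop3-2} line by line, with the extremal vector field playing the role of the soliton vector field. First I would pin down $\Lambda$. For each $Z\in\mathcal C$ with extremal metric $\omega_Z$, put $V_Z=\nabla^{1,0}_{\omega_Z}S(\omega_Z)$; since $S(\omega_Z)-\underline S$ is real, $JV_Z$ generates a one-parameter group of isometries of $(Z,\omega_Z)$, and after choosing an $L^2$-orthonormal eigenbasis of $H^0(Z,L_Z^r)^*$ for the $V_Z$-action (a \emph{compatible} basis, in the terminology of Section \ref{KRF}) I may regard $JV_Z$ as an element of the Lie algebra of the diagonal maximal torus $\mathbb T\subset U(m)$, independently of the chosen compatible basis. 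Exactly as in the lemma preceding Lemma \ref{lem3-1}, the map $\mathcal C\to\R^N$, $Z\mapsto JV_Z$, is continuous: near a smooth point of a limit one has smooth convergence of the metrics, hence --- by interior elliptic estimates applied to $S(\omega_{Z_i})-\underline S=\Delta_{\omega_{Z_i}}(\cdot)$ --- of the scalar curvatures, hence of the gradient vector fields, so the image is connected. On the other hand, for a fixed subtorus $T'\subset\mathbb T$ and a connected component $\widetilde\Hilb$ of $\Hilb^{T'}$, the $T'$-weight decomposition of $H^0(Z,L_Z^{rk})$ is the same for all $Z\in\widetilde\Hilb$, hence so is the Futaki-Mabuchi bilinear form on $Lie(T')$, hence so is $V_Z$, which is characterized as the unique holomorphy potential whose pairing with $Lie(T')$ reproduces the Futaki invariant --- the precise analogue of the characterization (\ref{eqn3-2}) of the soliton vector field. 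Since there are countably many subtori, each with finitely many components of fixed locus, the image of $Z\mapsto JV_Z$ is countable, hence a single point $\xi$; I set $\Lambda\in\sqrt{-1}Lie(U(m))$ equal to $-J\xi$, so that $\Lambda$ acts on $H^0(Z,L_Z^r)^*$ exactly as $V_Z$ does, for every $Z\in\mathcal C$.

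Next I would verify $(**)$. The uniform curvature and diameter bounds, together with the interior parabolic estimates of \cite{CH2} already used in proving part (1) of Theorem \ref{thm1-6}, bound $|\dot\phi(t)|=|S(\omega(t))-\underline S|$ and $|\Delta_{\omega(t)}\dot\phi(t)|$ uniformly in $t$; since $(\dot A(t)A(t)^{-1})_{\alpha\beta}$ equals, up to a fixed constant, $\int_X\langle s_\alpha(t),s_\beta(t)\rangle\,(-r\dot\phi(t)+\Delta\dot\phi(t))\,\omega^n(t)$ (the two terms coming from differentiating the metric on $L^r$ and the volume form $\omega^n(t)$), the path $A(t)$ is uniformly Lipschitz. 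Given $t_i\to\infty$, I pass to a subsequence so that $A_i(\tau):=A(t_i+\tau)$ converges uniformly on $[0,2]$ to a Lipschitz path $A_\infty$, and so that $(X,L,\omega(t_i))$ converges in the polarized Cheeger-Gromov sense to some $(Z,L_Z,\omega_Z)\in\mathcal C$ with $\{A(t_i).s_\alpha\}$ converging to an orthonormal basis $\{s_\alpha(\infty)\}$ of $H^0(Z,L_Z^r)$. The crucial input, furnished by \cite{CH2}, is that the rescaled flows $\{\omega(t_i+\tau)\}_{\tau\in[0,2]}$ converge smoothly on the regular part of $Z$ \emph{with a gauge chosen uniformly in $\tau$}, the limit being the Calabi flow issuing from the extremal metric $\omega_Z$; because $\omega_Z$ is extremal this limiting flow is self-similar, $\omega_Z(\tau)=\Phi_\tau^*\omega_Z$, where $\Phi_\tau$ is the one-parameter group of biholomorphisms of $Z$ generated by $V_Z$ (as $S(\omega_Z)-\underline S$ is the holomorphy potential of $V_Z$) --- the analogue of the self-similar soliton flow of Section \ref{KRF}. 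Hence $\{A_i(\tau).s_\alpha(t_i)\}$ converges to an orthonormal basis of $H^0(Z,L_Z^r)$ with respect to $\Phi_\tau^*h_Z$. Choosing $h\in U(m)$ so that $\{h.s_\alpha(\infty)\}$ is a compatible basis, the basis $\{e^{\tau\Lambda}h.s_\alpha(\infty)\}$ is also orthonormal for $\Phi_\tau^*h_Z$, which forces $A_\infty(\tau)=g(\tau)e^{\tau\Lambda}h$ with $g(\tau)\in U(m)$; the Hermitian symmetry of $\dot A_\infty(\tau)A_\infty(\tau)^{-1}$ then makes $g(\tau)\equiv g$ constant, so $A_i(\tau)A(t_i)^{-1}\to ge^{\tau\Lambda}g^{-1}$, which is precisely $(**)$.

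The main obstacle is the same as in the K\"ahler-Ricci flow case: extracting from \cite{CH2} the convergence of the flow over $[t_i,t_i+2]$ with a single gauge transformation valid for all $\tau$. Once that uniform-gauge convergence is in hand, the self-similarity of the limit flow and the Hermitian-symmetry argument make the rest routine. A secondary point, needing care but no new idea, is the constancy of $V_Z$ along $\mathcal C$, where the Futaki-Mabuchi characterization of the extremal vector field plays the role that the Berman-Witt-Nystr\"om characterization \cite{BW} of the soliton vector field plays in Lemma \ref{lem3-1}.
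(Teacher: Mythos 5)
Your proof is correct and follows the paper's architecture: pin down $\Lambda$ by showing $Z\mapsto JV_Z$ is constant on $\mathcal C$, then rerun the proof of Proposition \ref{prop3-2} verbatim using the self-similarity of the limit Calabi flow issuing from an extremal metric. The one place you diverge is the countability step in the analogue of Lemma \ref{lem3-1}: you prove it by transplanting the soliton argument, i.e.\ the extremal field is characterized by the Futaki--Mabuchi pairing $Fut(W)=\langle V_Z,W\rangle_{FM}$ on $Lie(T')$, and both sides are computed by equivariant Riemann--Roch from weight data that are locally constant on connected components of $\Hilb^{T'}$, so the image of $\mathcal V$ is countable. The paper instead invokes the rationality theorem of Futaki--Mabuchi \cite{FM}: the extremal vector field always generates an $S^1$-action, and an abelian Lie algebra contains only countably many rational elements, so countability (hence constancy, by connectedness) is immediate. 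Your route is longer but self-contained and exactly parallel to Lemma \ref{lem3-1}; the paper's is a genuine shortcut available only in the extremal (not the soliton) setting, and it has the side benefit of making $\Lambda$ rational from the outset, which is why the paper later does not need to perturb $\Lambda$ to a nearby rational $\Gamma$ when building the test configuration. Everything else --- the Lipschitz bound on $A(t)$ from the boundedness of $S-\underline S$ and $\Delta(S-\underline S)$, the uniform-gauge smooth convergence over $[t_i,t_i+2]$ from \cite{CH2}, and the Hermitian-symmetry argument forcing $g(\tau)$ to be constant --- matches the paper's proof of Proposition \ref{prop3-2}.
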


\begin{proof}
 The key property is that by \cite{FM}, the extremal vector field $V'=\nabla' S(\omega')$ is always rational, i.e. it always generates an $S^1$-action on $X'$ which also lifts to $L'$. This implies an analogous statement to Lemma \ref{lem3-1} is true, and the proof is simpler (since there are at most countably many rational elements in the an abelian Lie algebra). From here the proof of our claim is exactly the same as Proposition \ref{prop3-2}. 
\end{proof}

We first obtain that there is a unique element $(X_\infty, L_\infty)$ in $\mathcal C$, by Calabi's structure theorem for extremal metrics (which says that $\Aut(X', L', V')$ is reductive) and the uniqueness of extremal K\"ahler metrics on a fixed polarized K\"ahler manifold. Then by similar arguments to Proposition \ref{prop3-5} we know $(X, L)$ is K-unstable and there is a test configuration $\mathcal X$ for $(X, L^r)$ with central fiber $(\overline X, \mathcal O(1)|_{\overline X})$ with negative Futaki invariant. Again the proof is simpler since our assumptions rule out the appearance of possible singularities and by rationality of the extremal vector field we do not need to perturb the $\Lambda$. Moreover, as Proposition \ref{prop3-6} we know $(X, h(t))$ satisfies \textbf{(H2)}, and so we obtain an intrinsic description of $(\overline X, \mathcal O(1)|_{\overline X})$ as the scheme corresponding to the graded ring associated to the filtration of $\bigoplus_{k\geq 0} H^0(X, L^{rk})$ defined by the Calabi flow solution $\omega(t)$.  Furthermore, $\overline X$ is smooth since by Theorem \ref{thm2-12} there is a $\Lambda$-equivariant test configuration for $\left(\overline X, \mathcal O(1)|_{\overline X} \right)$ with central fiber $(X_\infty, L_\infty)$, and smoothness is an open condition among a flat family. 

Now it remains to prove the last statement in Theorem \ref{thm1-6}. 
For this we notice by definition 
$$Fut(\mathcal X)=Fut(X_\infty, \Lambda)=-||S(\omega')-\underline S||_{L^2}^2.$$
By the smooth convergence we have 
$$||S(\omega')-\underline S||_{L^2}^2 \geq  \inf_{\omega\in 2\pi c_1(L)} ||S(\omega)-\underline S||_{L^2}^2. $$
By definition in \cite{Do06} and equivariant Riemann-Roch theorem,  we have 
$$N_2(\mathcal X)^2=||S(\omega')-\underline S||_{L^2}^2$$
Hence we have 
$$Fut(\mathcal X)/N_2(\mathcal X)\leq -\inf_{\omega\in 2\pi c_1(L)} ||S(\omega)-\underline S||_{L^2}^2.$$
On the other hand, by \cite{Do06}, we also have 
$$||\mathcal X||^{-1}Fut(\mathcal X)\geq -\inf_{\omega\in 2\pi c_1(L)} ||S(\omega)-\underline S||_{L^2}. $$
Therefore the inequality holds. This finishes the proof of Theorem \ref{thm1-6}. 

\

There are a few remarks. 

\begin{enumerate}[(1)]

\item One can also formulate conjectures relating the above $\overline X$ with optimal degeneration, similar to Conjecture \ref{conj3-7} and Problem \ref{prob3-8}.  The difference is that here we can use the known notion of an ``optimal degeneration", as introduced in \cite{Do06}.

\item It seems also possible to allow suitable classes of singularities to occur, as in the case of K\"ahler-Ricci flow on Fano manifolds. This, together with an appropriate weak compactness theory,  might lead to a proof of the Yau-Tian-Donaldson conjecture in some special cases. We will leave this for future work. 
\end{enumerate}

\vspace{0.5in}

Xiuxiong Chen, Department of Mathematics,  Stony Brook University,
NY, 11794, USA;
School of Mathematics, University of Science and Technology of China, Hefei, Anhui, 230026, PR China;
xiu@math.sunysb.edu.\\

Song Sun, Department of Mathematics, Stony Brook University,
NY, 11794, USA; song.sun@stonybrook.edu.\\

Bing  Wang, Department of Mathematics, University of Wisconsin-Madison,
Madison, WI, 53706, USA;  bwang@math.wisc.edu.\\

\end{document}